\documentclass[11pt]{amsart}
  
\usepackage{mathrsfs}  
\usepackage[backref,colorlinks]{hyperref} 
\usepackage{amsmath,amssymb,amsthm,bm}
\usepackage[foot]{amsaddr}
\usepackage[abbrev,msc-links]{amsrefs}   
\usepackage[utf8]{inputenc}
\usepackage{tikz}
\usepackage{enumerate}
\usepackage{enumitem}
\usepackage{graphicx}
\usepackage{epstopdf}
\usepackage{lineno}

\usepackage{mathtools}
\DeclarePairedDelimiter\ceil{\lceil}{\rceil}
\DeclarePairedDelimiter\floor{\lfloor}{\rfloor}

\DeclarePairedDelimiter{\final}{\langle}{\rangle}

\DeclareMathOperator{\dist}{dist}
\DeclareMathOperator{\diam}{diam}
\DeclareMathOperator{\height}{ht}

\theoremstyle{plain}
\newtheorem{thm}{Theorem}[section]
\newtheorem{fact}[thm]{Fact}
\newtheorem{prop}[thm]{Proposition}
\newtheorem{clm}[thm]{Claim}

\newtheorem{lem}[thm]{Lemma}

\newtheorem*{clm*}{Claim}

\theoremstyle{definition}
\newtheorem{rem}[thm]{Remark}
\newtheorem{dfn}[thm]{Definition}
\newtheorem{exmp}[thm]{Example}
\newtheorem{quest}[thm]{Question}
\newtheorem{conj}[thm]{Conjecture}
\newtheorem{obs}[thm]{Observation}

\numberwithin{equation}{section}


\normalsize                              
\setlength{\hoffset}{-1.6cm}
\addtolength{\textwidth}{4cm}
\addtolength{\voffset}{-0.5cm}
\addtolength{\textheight}{1cm}

\setlength{\marginparwidth}{1.5cm}

\let\polishlcross=\l
\def\l{\ifmmode\ell\else\polishlcross\fi}




\def\NN{\mathbb N}
\def\ZZ{\mathbb Z}


\def\cH{{\mathcal H}}

\def\cP{{\mathcal P}}

\def\tw{{\mathrm{tw}}}


\def\eps{\varepsilon}
\def\phi{\varphi}

\DeclareMathAlphabet\bfc{OMS}{cmsy}{b}{n}
\DeclareMathAlphabet{\pzc}{OT1}{pzc}{m}{it}


\title[Slow graph bootstrap percolation  II]{Slow graph bootstrap percolation  II:  \\Accelerating  properties}
\author{David Fabian $^{1,\ast}$}
\author{Patrick Morris $^{2,\dagger}$}
\author{Tibor Szab\'o $^{3,\ddagger}$}

\address{$^1$ Institut für Quantenphysik, Universität Ulm, Germany}
\address{$^2$ Departament de Matem\`atiques, Universitat Polit\`ecnica de Catalunya (UPC), Barcelona, Spain.}
\address{$^3$ Institute of Mathematics, Freie Universit\"at Berlin, Germany}

\thanks{$^\ast$ Research supported by the Deutsche Forschungsgemeinschaft (DFG)
Graduiertenkolleg “Facets of Complexity” (GRK 2434). }

\thanks{$^\dagger$ Research supported by   the DFG Walter Benjamin program - project number 504502205 and by the
Marie Sk{\l}odowska-Curie grant RAND-COMB-DESIGN (number 101106032). Views and opinions expressed are
however those of the author(s) only.}

\thanks{$^\ddagger$ Research
supported by the DFG under Germany’s Excellence Strategy - The Berlin Mathematics Research Center
MATH+ (EXC-2046/1, project ID: 390685689).}

\email{david.fabian@uni-ulm.de, pmorrismaths@gmail.com,  szabo@math.fu-berlin.de}

\date{\today}

\begin{document}

\begin{abstract}
For a graph $H$ and an $n$-vertex graph $G$, the $H$-bootstrap process on $G$ is the process which starts with $G$ and, at every time step, adds any missing edges on the vertices of $G$ that complete a copy of $H$. This process eventually stabilises and we are interested in the extremal question raised by Bollob\'as of determining the maximum \emph{running time} (number of time steps before stabilising) of this process over all possible choices of $n$-vertex graph $G$. In this paper, we initiate a systematic study of the asymptotics of this parameter, denoted $M_H(n)$, and its dependence on properties of the graph $H$.  Our focus is on $H$ which define relatively fast bootstrap processes, that is, with $M_H(n)$ being at most linear in $n$. We  study the graph class of trees, showing that one can bound $M_T(n)$ by a quadratic function in $v(T)$ for all trees $T$ and all $n$. We then go on to explore the relationship between the running time of the $H$-process and the minimum vertex degree and connectivity of $H$. 
\end{abstract}
\maketitle
\section{Introduction}
Given graphs $H$ and $G$, let $n_H(G)$ denote the number of copies of $H$ in $G$. The $H$\textit{-bootstrap percolation process}
($H$\emph{-process} for short)
on a graph $G$ is the sequence $(G_i)_{i\geq 0}$ of graphs defined by $G_0 := G$ and
    \begin{linenomath}
        \begin{equation*}
        V(G_i) := V(G), \qquad \quad
        E(G_i) := E(G_{i-1}) \cup \left\{e\in\binom {V(G)} 2 : n_H\left(G_{i-1}+e\right)>n_H(G_{i-1})\right\},
        \end{equation*}
    \end{linenomath}
for $i\geq 1$.
We call $G$ the \emph{starting graph} of the process and $\tau_H(G):=\min\{t\in \NN: G_t=G_{t+1}\}$ the \emph{running time} of the $H$-process on $G$, which is the point at which the process stabilises. Finally, we define $G_\tau$ with $\tau=\tau_H(G)$ to be the \emph{final graph} of the process and denote it $\final{G}_H:=G_\tau$.  

The $H$-bootstrap percolation process was introduced in 1968 by Bollob\'as~\cite{bollobas1968weakly} in his study of \emph{weak saturation} and has since been studied from many different viewpoints with connections being made to the general study of cellular automata and bootstrap processes~\cite{morris2017bootstrap}. In particular, inspired by similar questions from statistical physics~\cite{chalupa1979bootstrap}, Balogh, Bollob\'as and Morris~\cite{balogh2012graph} initiated the study of the case when the starting graph $G$ is the binomial random graph $G(n,p)$. In  both this probabilistic setting and the original extremal graph theory question of weak saturation the focus has been to determine under what conditions the final graph of the process is the complete graph. From a cellular automata perspective, where one views the process as a virus spreading, this translates to asking whether the virus will infect the  entire population.

More recently, there has been interest in asking \emph{how long} a virus will spread for, that is, how long does it take for a  percolation process to stabilise. This was studied by Gunderson, Koch and Przykucki~\cite{gunderson2017time} in the context of a random starting graph and Bollob\'as raised the extremal question of determining the maximum running time of the $H$-bootstrap process over all $n$-vertex starting graphs $G$. 

    \begin{dfn} \label{def:maxruntime}
    For $n\in\NN$, we define $M_H(n)$ to be 
	\[M_H(n):=  \max_{|{V(G)}|=n}\tau_H(G).\]
    \end{dfn}

We also mention that similar questions on running times have been recently studied for neighbourhood percolation, a cellular automata closely related to graph bootstrap percolation, both in the extremal setting~\cites{benevides2015maximum,przykucki2012maximal} and in the probabilistic setting of a random starting graph \cites{balister2016time,bollobas2014time,bollobas2015time}.

\subsection {Previous work}\label{sec:previous}
The initial study of the maximum running time of $H$-bootstrap processes focused on the case when $H$ is a clique. Indeed, Bollob\'as, Przykucki, Riordan and Sahasrabudhe~\cite{bollobas2017maximum} and, independently, Matzke~\cite{matzke2015saturation} showed that $M_{K_4}(n)=n-3$, for all $n\ge 3$. Moreover the first set of authors~\cite{bollobas2017maximum} gave constructions showing that $M_{K_r}(n)\ge n^{2-\lambda_r-o(1)}$ for $r\ge 5$, where $\lambda_r$ is some explicit constant such that $\lambda_r\rightarrow 0$ as $r\rightarrow \infty$.  However the same authors believed that there was a limit to how long the   $K_r$-bootstrap process could last and conjectured that for all $r\ge 5$, $M_{K_r}(n)=o(n^2)$. Balogh, Kronenberg, Pokrovskiy and the third author of the current paper~\cite{balogh2019maximum} then  disproved this conjecture, showing  that $M_{K_r}(n)=\Omega(n^2)$ for all $r\ge 6$. For $r=5$, using an interesting connection to Behrend's construction of $3$-term arithmetic progression free sets, they could show that $M_{K_5}(n)$ grows greater than $n^{2-\eps}$ for all $\eps>0$ but it is unknown whether the rate of growth is quadratic or not. In the first paper of our series \cite{FMSz1} (see also \cite{fabian2022maximum}) we determined $M_H(n)$ exactly for all cycles $H=C_{k}$ with $ 3\leq k \in \NN$ (the $k=3$ case was known and is an exercise), showing that $M_{C_k}(n)$ is of the order $\log_{k-1}(n)$ and that the exact form of the function depends on the parity of $k\in \NN$. Finally, in recent papers, Noel and Ranganathan \cite{noel_running_2022}, Hartarsky and Lichev \cite{hartarsky_maximal_2022}, and Espuny Díaz, Janzer, Kronenberg and Lada \cite{espuny_diaz_long_2022} extended the study of $M_H(n)$ to hypergraphs, again focusing on the case where $H$ is a clique and providing lower bounds.

\subsection{A preview} \label{sec:preview}
Building on  \cites{balogh2019maximum,bollobas2017maximum}, in forthcoming work~\cite{FMSz3} we develop a general framework for providing lower bounds on $M_H(n)$   using so-called \emph{chain constructions}. 
We use our framework to give  very effective lower bounds for many different $H$,  often asymptotically meeting the trivial upper bound of $n^2$. 
Indeed using our framework we will show that, in a very strong sense, \emph{almost all} graphs $H$ have the property that $M_H(n)=\Theta(n^2)$. In more detail we show that for $k\in \NN$ and $H=H(k,p)$ the binomial random graph such that $p=\omega(\log k/k)$, with probability tending to 1 as $k$ tends to infinity, $H$ has the property that $M_H(n)=\Theta(n^2)$ (asymptotics for $M_H$ are with respect to $n$ here and throughout). Similarly, we  show quadratic running time for $H$ that have a large minimum degree $\delta(H) > 3v(H)/4$. Even for certain graphs $H$ where we can show that the running time is subquadratic (we will show this for all bipartite graphs for example), our chain constructions will show that $M_H(n)$ grows superlinearly for many natural $H$ such as $H=K_{k,\ell}$ with $k,\ell\geq 3$. 

These results suggest that in order to have a  maximum running time that is at most linear, $H$ must have \emph{atypical} and \emph{sparse} structural properties. The purpose of this paper is to explore such properties and the extent to which they accelerate the running times of $H$-processes. For simplicity in what follows   we restrict our attention to \emph{connected} $H$ but discuss briefly the disconnected case in our concluding remarks, see Section \ref{sec:disconnect}.  Our starting point is a natural class of sparse atypical graphs, namely \emph{trees}.

\subsection{Trees} \label{sec:intro-trees}
Our first main result shows that for any tree $T$, the $T$-process stabilises after  constantly many steps, that is, the running time is  independent of the number of vertices $n$ of the starting graph. Our   upper bound gives a quadratic dependence on the number $t$ of vertices of $T$.

    \begin{thm}\label{thm:trees}
    Let $t\in\NN$ and $n\geq 2t$. Every tree $T$ on $t$ vertices satisfies
        \[
        M_T(n) \leq \frac 1 8 \cdot (t^2 + 6t + 68).
        \]
    \end{thm}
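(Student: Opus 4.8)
The plan is to show that once the $T$-process has run for a bounded number of steps, the graph stabilises. The key structural idea is that a tree $T$ on $t$ vertices ``propagates'' through a host graph only via local configurations: whenever adding an edge $e=uv$ creates a new copy of $T$, that copy is a tree on $t$ vertices containing $e$, so it witnesses a path of length at most $t-1$ between $u$ and $v$ already present in the current graph, plus enough ``room'' (enough vertices of appropriate degree) to embed the rest of $T$. First I would record the basic observation that if $G$ has minimum degree at least $t-1$ (or more crudely, if $G$ contains a sufficiently large clique, or simply enough vertices each of degree $\ge t-1$), then any edge $uv$ with $\dist_G(u,v)\le t-1$ can be completed to a copy of $T$ in one step: greedily embed a path of $T$ along a shortest $u$--$v$ path and hang the remaining branches on the high-degree vertices. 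This reduces the problem to controlling the diameter of the ``active'' part of the graph.

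Next I would argue that after a constant number of steps the relevant neighbourhoods become dense. Concretely: call a vertex \emph{saturated} if its degree is at least $t-1$. At time $0$, if no edges are ever added the running time is $0$, so assume some edge is added; the copy of $T$ that is completed already has $t$ vertices spanning it, and within $O(t)$ further steps one can show that all vertices in a bounded-radius ball around that copy become saturated and pairwise close, by iterating the one-step completion lemma above. The point is that each time step can only increase distances-to-completion by a controlled amount, while the tree has bounded size, so the ``frontier'' of newly added edges can advance only a bounded number of times before either stabilising or filling in an entire clique-like region, after which no new distances are short enough to be relevant. Making this precise, I expect, is where the quadratic-in-$t$ bound — and the explicit constant $\tfrac18(t^2+6t+60)$ — comes from: one needs roughly $t/2$ rounds to build up a path/structure of the needed length, and roughly another $t/2$ rounds of ``saturation'' to ensure completions happen, with lower-order terms absorbing boundary cases (e.g.\ $T$ a star, $T$ a path, small $t$).

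The main obstacle will be controlling the interaction between \emph{length} (how far apart the endpoints of a newly added edge can be) and \emph{multiplicity} (ensuring there are genuinely \emph{new} copies of $T$, not just copies, so that the process is actually adding edges). Unlike the clique case, a tree can be embedded in many ways, so I must be careful that the ``one new copy'' condition $n_T(G_{i-1}+e) > n_T(G_{i-1})$ is met — this is automatic once the shortest $u$--$v$ path plus the branches form a copy using $e$, since $e$ is a bridge of that copy, hence the copy did not exist before. The second delicate point is bounding how the set of pairs at distance $\le t-1$ can grow: I would use that once a ball of radius $\sim t$ around the initial copy is complete, every vertex within distance $\sim t$ of it is saturated, and any further edge addition would have to join two vertices at distance $> t-1$ in $G_{i-1}$, which cannot create a copy of the connected graph $T$ on $t$ vertices. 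Quantifying ``$\sim t$'' optimally, and checking the hypothesis $n \ge 2t$ suffices to have room to embed $T$ at every step, is the technical heart; the rest is the bookkeeping that yields the stated constant.
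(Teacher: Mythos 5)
Your approach has a genuine gap at its central step. You claim that a newly added edge $e=uv$ completing a copy of $T$ ``witnesses a path of length at most $t-1$ between $u$ and $v$ already present in the current graph,'' and later that an edge joining two vertices at distance greater than $t-1$ in $G_{i-1}$ ``cannot create a copy of the connected graph $T$.'' Both are false for trees. Since $e$ is a bridge of $T$, the copy of $T-e$ in $G_{i-1}$ splits into two components, one containing $u$ and one containing $v$, and gives no path between them at all; they can be at arbitrary distance, or even in different components of $G_{i-1}$. For instance if $T=P_t$, a copy of $P_a$ ending at $u$ together with a distant, vertex-disjoint copy of $P_{t-a}$ starting at $v$ is exactly the configuration that makes $uv$ complete a new copy of $T$. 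Thus the diameter-control framework, which is natural for cycle processes (where removing an edge leaves a connected graph, so the endpoints of a new edge are joined by a short detour), does not transfer to trees, and the ``bounded frontier'' heuristic has no sound basis here. You also give no mechanism for proving that a ball becomes complete within $O(t)$ steps, which would be a nontrivial fact even if it were the right reduction.

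The paper's proof follows a different route. It fixes a root $z$ of $T$ (chosen to roughly bisect a diameter path) and a structured minimum vertex cover $U$ of $T$ (Lemma~\ref{lem:trees_vertexcover}). The key lemma (Lemma~\ref{lem:trees_almostuniversal}) shows that in any copy of $T$, the image of $U$ becomes adjacent to every vertex outside that copy within $\ceil{\height_z(T)/2}$ steps. Proposition~\ref{prop:tree_many_univ} then tracks, in four stages, how genuinely universal vertices accumulate; the quadratic-in-$t$ term arises because one stage is controlled by the running time of the $T[U]$-process on $|U|\leq t/2$ vertices, bounded trivially by $\binom{|U|}{2}$. Your saturated-vertices idea resembles the almost-universal-cover observation in spirit, but the vertex-cover decomposition of $T$ and the reduction to a bootstrap process on a bounded vertex set are the ingredients your sketch is missing.
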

 
For certain simple examples of a tree $T$, such as a path or a star, it does not take long to convince oneself that the $T$-process stabilises in constant time. Furthermore, given that a neighbour of a leaf in a copy of the target tree $T$ in $G_1$ becomes an almost universal vertex 
in just one round of the percolation process and hence the diameter of $G_2$ already becomes constant, the statement of Theorem~\ref{thm:trees} perhaps does not come across as unexpected. The actual proof however, that from constant diameter the percolation process  finishes in constant time, turned out to be a considerable challenge. Indeed,  the analysis needed to handle all the great variety of trees and choices of starting graph is quite delicate. Moreover, extra ideas were needed to achieve an upper bound whose dependence on the number of vertices $t$ of the tree is polynomial.

\vspace{2.4mm}

What affects the maximum running time of $H$-percolation? 
Theorem \ref{thm:trees} shows that $M_T(n)$ is constant for trees $T$ and by \cite{FMSz1} it is logarithmic for cycles. These are in contrast with cliques, where percolation can be almost quadratically slow already for $K_5$ (and linear for $K_4$). 
Considering that trees $T$ have \emph{minimum degree} $\delta(T)=1$, \emph{vertex connectivity} $\kappa(T)=1$ and  \emph{edge connectivity} $\kappa'(T)=1$ while $\kappa(C_k)=\kappa'(C_k)=\delta(C_k)=2$ for any $k\geq 3$, one cannot help but wonder whether these parameters  being at most two has anything to do with sublinear running time. 
In this work we initiate the study of target graphs with relatively fast percolation and the relationship between running times and small minimum degree/connectivity.  We obtain several necessary and sufficient conditions, refute a couple of natural conjectures, as well as arrive at a number of tantalizing open problems.

\subsection{Small degree vertices} \label{sec:small degrees}
In our analysis of the running time for trees, the existence of leaves plays a crucial role. Moreover, the following simple example shows that adding a vertex of degree one can hugely accelerate the running time. 

    \begin{exmp} \label{ex:clique pendent edge}
    Let $k\geq 3$ and $H_k$ be the $k+1$-vertex graph formed by taking a clique of size $k$ and adding a pendent edge to one of its vertices. Then $M_{H_k}(n)\leq 3$.
    \end{exmp}
    \begin{proof}
    Take $G$ to be an arbitrary $n$-vertex starting graph with $H_k$-process $(G_i)_{i\geq 0}$ and suppose that $\tau_H(G)\geq 1$ and so there is some copy of $H_k$ in $G_1$. Let $U\subset V(G)$ be the set of $k$ vertices that form a clique in this copy of $H_k$. Then in $G_2$, we have that $U$ and $V(G)\setminus U$ form parts of a complete bipartite graph. Thus any pair of vertices in $V(G)\setminus U$ appear as an edge in $G_3$ and so $G_3$ is a complete graph. 
    \end{proof}

This suggests a connection between fast running times and vertices of small degree,  in particular vertices of degree one. Our next main result shows that such small degree vertices are indeed \emph{necessary} for a sublinear running time. 

    \begin{thm}\label{thm:23bound}
    Let $H$ be a connected graph with minimum degree $\delta(H)\geq 2$ and maximum degree $\Delta(H)\geq 3$.
    Then
        \[
        M_H(n) = \Omega(n).
        \]
    Moreover, if $H$ is bipartite then there exists a bipartite starting graph $G$ with $\tau_H(G)=\Omega (n)$. 
    \end{thm}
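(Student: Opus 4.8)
The plan is to build, for every such $H$, a starting graph $G$ on $n$ vertices whose $H$-process lasts $\Omega(n)$ rounds, by arranging $\Omega(n)$ ``gadgets'' in a long chain so that copies of $H$ can only be completed one gadget at a time. First I fix a vertex $w$ of $H$ of maximum degree $\Delta(H)\ge 3$ to serve as a rigid \emph{anchor}; since $\delta(H)\ge 2$ the graph $H$ contains a cycle, and after a short analysis of the block structure of $H$ I select two edges $e,f\in E(H)$ — the \emph{slots} — with the following features: the \emph{residual gadget} $J:=H-\{e,f\}$ still contains $w$ with degree $\ge 2$, and filling one slot of a copy of $J$ turns it into a copy of $H$ minus (the other) single edge. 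Whenever possible I take $e$ and $f$ vertex-disjoint; otherwise (for example when $H$ has very few degree-$2$ vertices, or they all lie near $w$) a minor variant of the gadget is used.

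Given $e=\alpha\beta$ and $f=\gamma\delta$, the construction is: take $m:=\lfloor (n-v(H))/(v(H)-2)\rfloor=\Omega(n)$ vertex-disjoint copies $J_1,\dots,J_m$ of $J$; for $1\le i<m$ glue the image of the $e$-slot of $J_i$ onto the image of the $f$-slot of $J_{i+1}$; insert into $J_1$ the single edge that fills its $f$-slot; and pad with isolated vertices up to $n$. Call the result $G$; once $e,f$ are chosen as above, $G$ is $H$-free. At time $0$ the first gadget is exactly $H$ minus its $e$-slot edge, so that edge is added at step $1$; but this edge is simultaneously the $f$-slot edge of $J_2$, so now $J_2$ is $H$ minus its own $e$-slot edge, whence that edge is added at step $2$; iterating, the $e$-slot edge of $J_i$ is added at step $i$. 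Hence, \emph{provided} the $e$-slot edge of $J_i$ is genuinely absent at time $i-1$, we obtain $\tau_H(G)\ge m=\Omega(n)$.

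Establishing this proviso is the heart of the matter, and I expect it to be the main obstacle. One proves by induction on $t$ that $G_t$ equals $G$ together with precisely the $e$-slot edges of $J_1,\dots,J_t$; the inductive step amounts to checking that in $G_{t-1}$ the only edge whose insertion completes a new copy of $H$ is the $e$-slot edge of $J_t$. This is where the hypotheses $\delta(H)\ge 2$ and $\Delta(H)\ge 3$ enter. Since the gadgets overlap only in the size-$2$ vertex sets used for gluing — and these consist of low-degree vertices of $H$ — any copy of $H$ meeting two or more gadgets would exhibit a separation of $H$ of order at most $2$ incompatible with the chosen slots, or would have to realise the anchor $w$ using edges drawn from different gadgets, which is impossible since $w$ lies in the interior of a single gadget. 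One must also rule out new copies of $H$ being completed inside a gadget that has already become a full copy of $H$. It is precisely this rigidity that fails for $H=C_k$, where every slot is incident only to degree-$2$ vertices, copies of $C_k$ genuinely do span many gadgets, and the running time is merely logarithmic; hence $\Delta(H)\ge 3$ is essential. Carrying this out uniformly over all $H$, together with the variant gadgets needed when $e$ and $f$ cannot be taken vertex-disjoint, is where the bulk of the casework lies.

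For the ``moreover'' part, suppose $H$ is bipartite with classes $X,Y$. Label the slots so that $\alpha,\gamma\in X$ and $\beta,\delta\in Y$, and perform every gluing monochromatically — identify the $\alpha$-end of $J_i$ with the $\gamma$-end of $J_{i+1}$ and the $\beta$-end with the $\delta$-end. Then $G$ is a union of bipartite gadgets glued along vertices of equal colour, hence bipartite, and the single pre-filled edge runs between $X$ and $Y$; the argument above then produces a bipartite starting graph with $\tau_H(G)=\Omega(n)$.
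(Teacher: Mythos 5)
Your chain-of-gadgets construction is genuinely different from the paper's argument, and it is close in spirit to the chain constructions the authors mention will be developed in the forthcoming companion paper, but as written it has a crucial unestablished step. The entire argument rests on the inductive invariant that $G_t$ is exactly $G$ together with the $e$-slot edges of $J_1,\dots,J_t$, i.e. that at each step the unique new copy of $H$ that gets completed is the one inside the next gadget. You flag this as ``the heart of the matter,'' but what follows is a sketch, not a proof: the claim that a copy of $H$ meeting several gadgets ``would exhibit a separation of $H$ of order at most $2$ incompatible with the chosen slots'' is far from obvious and is false for some $H$ satisfying the hypotheses (many such $H$ do admit $2$-separations). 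You also do not verify $H$-freeness of the initial $G$, do not describe the ``minor variant'' gadget needed when vertex-disjoint slots $e,f$ are unavailable, and do not rule out fresh copies of $H$ appearing through already-filled gadgets as the graph densifies. So the proposal, taken at face value, does not yet yield a proof.

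The paper sidesteps all of these difficulties with a coarser, more robust argument. It builds a sparse graph $G'$ on $[\ell]\times \ZZ_{\delta-1}^r$ (with $r=v(H)-1$) whose consecutive layers are joined by $(\delta-1)$-regular bipartite graphs, and appends a clique $K_r$ attached completely to the first layer. Lemma~\ref{lem:growing_clique} guarantees that this starting graph percolates to $K_n$, so the process does not stall. The key structural fact (Claim~\ref{clm:min2max3_free}) is that $G'$ contains no copy of any \emph{connected component} of $H-e$ for any $e\in E(H)$ --- this is where $\delta(H)\ge 2$ forces such a component to contain a cycle and $\Delta(H)\ge 3$ is used, and it is far easier to verify than your exact-one-edge-per-step invariant. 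From this one deduces a wavefront bound: the smallest index $i_j$ at which a non-$G'$ edge touches column $j$ or later satisfies $i_j > i_{j-r}$, so the front advances by at most $r$ columns per time step and the running time is $\Omega(\ell)=\Omega(n)$. Crucially, the paper never needs $G$ to be $H$-free (it contains $K_{v(H)}$ from time $0$) and never needs to know exactly which edges appear when --- only that an edge at the front forces a component of $H-e$ to lie inside the part of $G'$ that hasn't changed yet. That ``locality modulo $r$'' idea is the real missing ingredient in your proposal and is what makes the proof go through uniformly for all admissible $H$; the bipartite ``moreover'' is then obtained by the same argument after replacing the clique $K_r$ with a complete bipartite graph $K_{r,r}$ (Claim~\ref{clm:growing_complete_bipartite_graph}).
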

    
Theorem \ref{thm:23bound} shows that the only connected graphs $H$ which have $\delta(H)\geq 2$ and sublinear $M_H(n)$ are cycles, which are known to have $M_H(n)$ logarithmic~\cite{FMSz1}. The next proposition shows that Theorem \ref{thm:23bound} is tight in that the lower bound on running time cannot be improved. Indeed    the complete bipartite graph $K_{2,s}$ has $\delta(K_{2,s})=2$ and maximum degree arbitrarily large and yet  only linear maximum running time.  
    
    \begin{prop}\label{prop:k2s_upper}
    For every $s\geq 3$, $M_{K_{2,s}}(n) = \Theta(n)$.
    \end{prop}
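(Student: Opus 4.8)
The lower bound $M_{K_{2,s}}(n)=\Omega(n)$ is immediate from Theorem~\ref{thm:23bound}: for $s\ge 3$ the graph $K_{2,s}$ is connected and bipartite with $\delta(K_{2,s})=2$ and $\Delta(K_{2,s})=s\ge 3$, and the moreover part even supplies a bipartite starting graph. So the content is the matching upper bound, namely that the $K_{2,s}$-process on any $n$-vertex graph stabilises within $O_s(n)$ steps.

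Here is the plan. Fix a starting graph $G$ with $\tau:=\tau_{K_{2,s}}(G)\ge 1$; then $G_1$ already contains a copy of $K_{2,s}$, with ``hub'' vertices $a,b$ (the part of size two) and leaves $c_1,\dots,c_s$. Since $K_{2,s}$ is connected, every added edge lies inside a single connected component and no two components ever merge, so it suffices to bound the number of steps at which an edge is added inside the component $C$ of $a$ and $b$. The engine is a \emph{syncing lemma}: if in some $G_i$ the vertices $a,b$ have at least $s$ common neighbours, then for every $v\in N_{G_i}(a)$ the copy of $K_{2,s}$ with hubs $a,b$ and leaves $v$ together with $s-1$ common neighbours of $a,b$ distinct from $v$ forces $bv\in E(G_{i+1})$, and symmetrically $av\in E(G_{i+1})$ for $v\in N_{G_i}(b)$. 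Iterating from $G_1$ (where $c_1,\dots,c_s$ witness the hypothesis) shows the hypothesis persists for all $i\ge 1$, and, writing $S_i:=\bigl(N_{G_i}(a)\cup N_{G_i}(b)\bigr)\setminus\{a,b\}$, the sets $S_i$ form a nested family with $|S_1|\ge s$, $|S_i|\le |V(C)|\le n$, and $S_i\subseteq N_{G_{i+1}}(a)\cap N_{G_{i+1}}(b)$; in particular, if $|S_i|$ does not grow at step $i+1$ then $N_{G_{i+1}}(a)=N_{G_{i+1}}(b)=S_i$. The remaining task is to show the process cannot run long without enlarging $S$. Roughly: if $S$ is stuck then $a$ and $b$ are joined to all of $S:=S_i$, and, crucially using that \emph{every} copy of $K_{2,s}$ has exactly two hub vertices, one argues that any edge completing a copy with hub pair $\{a,b\}$ would be incident to $a$ or $b$ and hence enlarge $S$, while a copy with any other hub pair creates a fresh pair of vertices with $\ge s$ common neighbours that, by another application of the syncing lemma, is forced into the orbit of $a,b$ and again enlarges $S$ as soon as it has a neighbour outside $S\cup\{a,b\}$; what is left is ``internal'' activity inside $G[S\cup\{a,b\}]$, which (exploiting that $a,b$ are universal on $S$, so that any vertex of $S$ with at least $s-1$ neighbours inside $S$ instantly becomes joined to all of $S$, after which distances collapse) finishes within $O_s(1)$ further steps. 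Consequently $|S_i|$ is nondecreasing, bounded by $n$, and increases within $O_s(1)$ steps whenever the process is live, so the number of live steps inside $C$ is $O_s(n)$, giving $\tau=O_s(n)$.

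The main obstacle is precisely the ``no stalling'' step — ruling out long stretches of edge additions that never reach outside $S\cup\{a,b\}$, and bookkeeping the possibility of several coexisting books or a ``remote'' active region of $C$ that has not yet been absorbed. This is exactly the point at which $K_{2,s}$ is special: having only two hub vertices means that once $a$ and $b$ are synced the syncing is total, so every newly created hub pair is immediately pulled into their orbit, whereas with three or more hubs (e.g.\ $K_{k,\ell}$, $k,\ell\ge 3$) the percolation can feed itself for much longer. Making this precise needs a careful case analysis of how a fresh hub pair, and the vertices adjacent to it, sit relative to $S$.
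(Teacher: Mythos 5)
The lower bound and the syncing lemma (your ``if $a,b$ have $\ge s$ common neighbours then $N(a)$ and $N(b)$ merge in one step'') match the paper's Claim~\ref{clm:k2s_nbr} almost verbatim, so that part is on the right track. The gap is in the potential function. You track $S_i := (N_{G_i}(a)\cup N_{G_i}(b))\setminus\{a,b\}$ for a \emph{single} hub pair $\{a,b\}$ extracted from $G_1$, and you aim to show that between consecutive growth steps of $|S_i|$ the process can only run for $O_s(1)$ steps. But nothing you write forces the $K_{2,s}$-process to keep producing edges \emph{near} $a,b$: the component $C$ can contain a second active region, spawned by a fresh hub pair $\{x,y\}$ that lies entirely outside $S\cup\{a,b\}$ and has all its common neighbours outside as well. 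No edge touching $a$ or $b$ need ever be created by that region, so $|S_i|$ can stall for an arbitrarily long stretch while edges continue to be added elsewhere in $C$. Your claim that ``every newly created hub pair is immediately pulled into the orbit of $a,b$'' has no mechanism behind it; you acknowledge this ``remote active region'' as the main obstacle, and indeed it is the obstacle, not a bookkeeping nuisance. A single-pair potential cannot bound a running time driven by activity at many independent pairs at once.

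The paper's proof fixes exactly this by making the potential global. Instead of one distinguished pair, it maintains a \emph{partition} $\cP$ of all of $V(G)$ into blocks with the property that any two vertices in the same block have $\ge s-1$ common neighbours (Claims~\ref{clm:k2s_blocks1} and~\ref{clm:k2s_blocks2}), and uses the number of blocks as the decreasing quantity. The key point (Claim~\ref{clm:k2s_blocks2}) is that \emph{whatever} edge is added within the next four steps — anywhere in $V(G)$ — completes a $K_{2,s}$ whose two hub vertices must lie in different blocks (else the edge would already be present by Claim~\ref{clm:k2s_blocks1}), and those two blocks can then be merged into one still satisfying the invariant. So any edge addition, local or remote, forces the global potential to drop within a constant number of steps; starting from the singleton partition with $n$ blocks, this gives $\tau \le 4n+3$. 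Your argument would need to be upgraded to this multi-pair bookkeeping to close the gap; once one does so one is essentially reproving the paper's claims. So the core idea overlaps, but the proposal as written does not yield the upper bound.
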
  

\subsection{Constructions with small degrees}
Theorem \ref{thm:23bound} shows that apart from cycles, a condition of $\delta(H)=1$  is \emph{necessary} for a sublinear running time. Given Example \ref{ex:clique pendent edge}, which shows that a single pendent edge can reduce the running time from quadratic to constant, one might also expect that $\delta(H)=1$ is   a \emph{sufficient} condition. Somewhat surprisingly, this turns out not to be the case. Indeed, there is a graph with a pendent edge whose running time is in fact quadratic.

    \begin{thm} \label{thm:simulate}
    For each $1\leq t\in \NN$, there is a connected graph $H$ with $\delta(H)=t$ and $M_H(n)=\Omega(n^2)$. 
    \end{thm}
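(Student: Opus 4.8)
The plan is to construct, for each $t\geq 1$, a connected graph $H$ with $\delta(H)=t$ whose bootstrap process can be forced to ``simulate'' a slow $K_r$-process for some fixed $r\geq 6$, thereby inheriting the $\Omega(n^2)$ lower bound from~\cite{balogh2019maximum}. The idea is that the pendent edge (or, more generally, the low-degree vertices) should be ``inert'': present in every copy of $H$ that ever gets completed, but never actually driving the percolation. So I would take a graph $K$ that is known to have $M_K(n)=\Omega(n^2)$ — for concreteness $K=K_r$ with $r\geq 6$ — and attach to it a rigid gadget $W$ in such a way that the resulting $H=H(t)$ has minimum degree exactly $t$, and such that in any starting graph one can ``pre-install'' a fixed copy of the gadget $W$ that participates in every completed copy of $H$.

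First I would fix the gadget. Let $W$ be, say, a clique $K_{t+1}$ (or a book, or whatever rigid graph of minimum degree $t$ is convenient), pick one vertex $w\in W$, pick one vertex $u\in K_r$, and let $H$ be the graph obtained from the disjoint union of $K_r$ and $W$ by identifying $u$ with $w$, or by joining $u$ to $w$ with an edge — the precise attachment will be chosen so that $\delta(H)=t$ and so that the copies of $H$ behave well. The key structural point to engineer is: every copy of $H$ in any graph $G_i$ of the process restricts, on the $K_r$-part, to a copy of $K_r$ that is completed exactly when the corresponding $K_r$-copy would be completed in the $K_r$-process, provided the $W$-part is already fully present. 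Second, given an $n$-vertex extremal starting graph $G^{(r)}$ for the $K_r$-process with $\tau_{K_r}(G^{(r)})=\Omega(n^2)$, I would build a starting graph $G$ for the $H$-process on roughly $n + O(1)$ vertices by taking $G^{(r)}$, adding a fixed clique $K_{t+1}$ on new vertices (fully present from time $0$), and wiring the distinguished vertex of this clique to $G^{(r)}$ exactly as $u$ is wired to $w$ in $H$, but doing so \emph{uniformly} — e.g.\ joining that vertex to all of $V(G^{(r)})$, or to the single vertex playing the role of $u$ — so that the gadget never needs to be (re)built and every potential $H$-copy already has its $W$-part in place.

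Third, I would prove the two containments. For the lower bound on $\tau_H(G)$: show by induction on $i$ that the edges added to $G^{(r)}$ inside $V(G^{(r)})$ during the first $i$ steps of the $H$-process are exactly the edges added during the first $i$ steps of the $K_r$-process on $G^{(r)}$; this uses that adding an edge $e\subseteq V(G^{(r)})$ creates a new copy of $H$ in $G_{i-1}$ if and only if it creates a new copy of $K_r$ in $G^{(r)}_{i-1}$, which holds because the only way an $H$-copy using $e$ can appear is via a $K_r$-copy using $e$ together with the ever-present gadget. Hence $\tau_H(G)\geq \tau_{K_r}(G^{(r)})=\Omega(n^2)$. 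For the (easy direction, just to confirm $\delta(H)=t$ and connectivity) one checks the gadget was built with minimum degree exactly $t$ and the attachment does not lower it, and that $H$ is connected by construction.

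The main obstacle I anticipate is controlling \emph{spurious} copies of $H$: when we embed $K_r$ into $H$ with a pendent gadget, a copy of $H$ inside $G_i$ might use vertices of the gadget region in some unintended way, or a new edge inside $V(G^{(r)})$ might complete an $H$-copy whose $W$-part lies partly inside $V(G^{(r)})$ rather than in the pre-installed clique, so that the $H$-process runs \emph{faster} than the $K_r$-process and the induction breaks. The fix will be to choose $W$ and the attachment rigidly enough — e.g.\ make $W$ have a feature (like a vertex of some specific degree, or a forbidden subgraph) that cannot be mimicked inside the host once we control which edges ever appear there, or use a counting/weighting argument showing that no \emph{new} copy of $H$ can arise except through the designated $K_r$-copy plus the fixed gadget. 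Equivalently, one argues that $n_H(G_{i-1}+e)>n_H(G_{i-1})$ forces a new $K_r$ on $V(G^{(r)})$, by a case analysis on how a new $H$-copy can intersect $V(G^{(r)})$ versus the gadget vertices. Getting this case analysis clean — and making sure it works simultaneously for all $t$, i.e.\ that increasing the gadget size to raise $\delta(H)$ does not open up new ways to cheat — is where the real work lies.
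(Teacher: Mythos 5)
Your high-level plan matches the paper's: lock a gadget in place and simulate a slow clique-process, with the low-degree vertex hanging off the gadget. You also correctly identify the central obstacle (spurious copies of $H$ where the gadget-part is played by vertices inside the host). However, the proposal leaves that obstacle unresolved, and the resolution is precisely the technical content of the paper's proof, so there is a genuine gap. Two things are missing. First, the paper does not just quote $M_{K_6}(n)=\Omega(n^2)$; it needs the stronger fact (the paper's Fact~4.3) that there is a $K_6$-process with quadratic running time whose graphs $\tilde G_i$ are \emph{$K_7$-free for all $i$}. This requires a non-trivial inspection of the Balogh--Kronenberg--Pokrovskiy--Szab\'o construction (via the $2$-skeleton of their hypergraph) and cannot be taken for granted. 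Second, given that $K_7$-freeness, the gadget is engineered to contain a clique $K_{t+8}$; since the $t$ vertices of $T'$ are complete to $V(\tilde G)$, the largest clique meeting $V(\tilde G)$ at any time has at most $t+6$ vertices, and hence the $K_{t+7}$ inside any copy of $H-e$ is forced to sit entirely in the pre-installed block $S'$. That forcing is what makes the case analysis close.

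Your proposed gadget, a pendant $K_{t+1}$ attached to $K_r$, does not have this property: $t+1\le r$ in the regime of interest, so once the $K_r$-process populates $V(G^{(r)})$ with cliques of size $r$, every one of them contains many $K_{t+1}$'s that could play the role of $W$, and the induction you sketch (``a new copy of $H$ forces a new $K_r$ in $V(G^{(r)})$'') breaks down immediately (e.g.\ the missing edge $e$ could lie in a spurious gadget-copy inside $V(G^{(r)})$). The remark that one should ``make $W$ have a feature that cannot be mimicked inside the host'' is exactly the right instinct, but the feature you need is a clique strictly larger than anything the simulated process can create in $V(\tilde G)$, and to have that you must first upgrade the $\Omega(n^2)$ input from \cite{balogh2019maximum} to the $K_7$-free version. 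Without those two pieces the argument, as written, does not go through.
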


Note that Theorem \ref{thm:simulate} is only really of interest for $t\leq 4$; otherwise we already have examples of $H$ with quadratic running times by considering cliques. 
%

So in our quest to characterise connected graphs $H$ with sublinear running time, what extra condition, in addition to the minimum degree being $1$, {\em guarantees} sublinear running time? 
The graph in Theorem~\ref{thm:simulate} has a vertex of full degree (of degree $15$). The construction in our next result shows that even limiting a largest degree to $3$ is of no help.

    \begin{thm}\label{thm:counterexample}
    There exists a connected graph $H$ with minimum degree $\delta(H)=1$ and maximum degree $\Delta(H)=3$ satisfying $M_H(n) = \Omega(n)$.
    \end{thm}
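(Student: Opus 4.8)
The plan is to deduce the theorem from the simulation framework of Definition~\ref{def:simulation}, in the same spirit as the proof of Theorem~\ref{thm:simulate}, but now using as the base graph the clique $\tilde H:=K_4$ rather than $K_6$. The key observation is that $K_4$ is itself a bounded-degree graph with a linear-length process: by~\cite{bollobas2017maximum,matzke2015saturation} we have $M_{K_4}(n)=n-3$, so for every $n$ we may fix an $n$-vertex graph $\tilde G_0$ with $\tau_{K_4}(\tilde G_0)=n-3$. It therefore suffices to produce a connected graph $H$ with $\Delta(H)=3$ and $\delta(H)=1$, together with a starting graph $G_0$ on $\Theta(n)$ vertices, such that the $H$-process on $G_0$ simulates the $K_4$-process on $\tilde G_0$ in the sense that each round of the $H$-process advances the simulated $K_4$-process by at most one step while running it to completion. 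Granting this, each of the $n-3$ steps of the $K_4$-process costs at least one round of the $H$-process, so $\tau_H(G_0)\geq n-3=\Omega(|V(G_0)|)$, and letting $n\to\infty$ (with routine padding to fill in intermediate orders) gives $M_H(N)=\Omega(N)$.

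It remains to choose the gadget. I would take $H$ to be $K_4$ with one of its edges subdivided once and a pendant leaf $p$ attached to the resulting degree-two vertex (any similarly small constant-size gadget of path-with-leaf type would do; the precise shape is dictated by the bookkeeping of the simulation). A one-line check shows the resulting $H$ is connected with $\Delta(H)=3$ and $\delta(H)=1$, the latter witnessed by $p$. The starting graph $G_0$ is then obtained from $\tilde G_0$ by performing the corresponding local surgery at every vertex and every potential edge, arranged so that the copies of $H$ appearing during the $H$-process on $G_0$ stand in tight, controlled correspondence with the copies of $K_4$ appearing during the $K_4$-process on $\tilde G_0$, each step of the latter triggering a bounded number of rounds of the former.

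The heart of the argument — and the step I expect to be the main obstacle — is to verify that this correspondence is \emph{faithful} in both directions: every edge added by the $K_4$-process must be mirrored by edges the $H$-process adds within $O(1)$ further rounds (so the simulation does not stall), and, conversely, the $H$-process must add \emph{no spurious edges}, i.e.\ no copy of $H$ should ever be completed except those forced by a completed copy of $K_4$. The delicate direction is the second, and the danger is precisely the pendant vertex: Example~\ref{ex:clique pendent edge} shows that if the vertex hosting the leaf of some copy of $H$ ever became near-universal, the entire process would collapse in constantly many rounds. The gadget and the construction of $G_0$ must therefore be engineered so that the leaf $p$ can only ever be realised at ``designated'' vertices of $G_0$ whose neighbourhoods provably remain under control throughout the full $\Theta(n)$-round simulation. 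Establishing this containment — that the degree-one appendage cannot be exploited to short-circuit the process — is where the real work lies; once it is in hand, the linear lower bound follows from the counting in the first paragraph.
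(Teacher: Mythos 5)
Your high-level strategy — build $H$ with a pendant and a starting graph $G_0$ so that the $H$-process simulates a known slow process — is indeed the one the paper uses, but your choice of $\tilde H=K_4$ runs into an obstruction that you acknowledge but do not resolve, and that I believe cannot be resolved within your setup. The extremal $K_4$-process on $\tilde G_0$ with $\tau_{K_4}(\tilde G_0)=n-3$ proceeds by growing a clique by one vertex at a time, so $\tilde G_i$ already contains a $K_6$ after a constant number of steps. Your proposed $H$ has only six vertices, so $H\subseteq K_6\subseteq \tilde G_i$ for all but constantly many $i$, and more to the point $H-mp$ (the subdivided $K_4$ together with an isolated vertex) embeds into any $K_5$ together with any sixth vertex. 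In the triangle-strip construction every vertex outside the growing clique has at least two neighbours inside it, so boundary vertices complete spurious copies of $H-e$ almost immediately, and the process you want to ``advance by at most one step per round'' instead collapses. This is not an incidental bookkeeping issue: the paper explicitly notes (Section~\ref{sec:counterexample}) that one cannot use a large clique as the locking gadget once $H$ is forced to be sparse, and this is precisely why Lemma~\ref{lem:cyclechord} replaces $K_4$ by the bipartite graph $\tilde H$ = $C_6$ plus a long chord, whose process stays bipartite (hence triangle-free) throughout.

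The second, related gap is that you have no locking mechanism. In Theorem~\ref{thm:simulate} the lock is a $K_{t+8}$ that cannot be reproduced in the $K_7$-free simulated graphs; in Theorem~\ref{thm:counterexample} the lock is the $18$-vertex graph $H'$, chosen so that it is $H'$-stable (Observation~\ref{obs:H'-self-stable}) and so that its four vertex-disjoint diamonds can only ever be found at the designated copy $H'_0$ — the crucial point being that diamonds and triangles cannot appear inside the bipartite $\tilde G_i$, which is what drives Claim~\ref{clm:counter_embedding}. Your gadget is just a two-edge appendage (subdivision vertex plus pendant) grafted onto $K_4$, which provides no such rigidity: there is nothing forcing the pendant $p$ and the subdivision vertex $m$ to be realised at controlled locations, and precisely because the ambient $\tilde G_i$ is clique-dense rather than triangle-free, the ``containment'' step you correctly identify as the heart of the matter fails rather than being merely unfinished. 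To make a simulation argument work you need both ingredients simultaneously: a simulated process whose graphs avoid the substructures your lock is built from, and a lock that is self-stable and structurally unmistakable within those graphs.
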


We remark that the maximum degree condition cannot be improved to 2 as paths have constant running time (Theorem \ref{thm:trees}). The proof of Theorem~\ref{thm:counterexample} is a highly more intricate manifestation of the {\em simulation} construction method we develop to establish Theorem~\ref{thm:simulate}.  Although we state Theorem \ref{thm:counterexample} as a single instance of a graph $H$, our construction can easily be adapted to give an infinite family of such graphs $H$.

\subsection{Low connectivity}\label{sec:connect}
As graphs $H$ with $\delta(H)=t$ also have vertex connectivity $\kappa(H)\leq t$ and edge connectivity $\kappa'(H)\leq t$, by Theorem \ref{thm:simulate}, bounds on these parameters are not sufficient to give upper bounds on the maximum running time of the $H$-process. We can however recover an effective bound using low connectivity if we have the added condition that the $H$-process on $H$ itself has a final graph $\final{H}_H$ which is complete. This is the content of our final theorem presented here. 

    \begin{thm}\label{thm:girth_construction}
    If $H$ is a nonempty graph such that there exists $e\in E(H)$ with $\kappa(H-e)\leq 2$ and $\final{H}_H=K_{v(H)}$, then
        \[
        M_H(n) = O(n).
        \]
    \end{thm}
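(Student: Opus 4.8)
The plan is to split the argument into a constant-length \emph{ignition} phase that produces a clique on $v:=v(H)$ vertices, followed by an \emph{absorption} phase of length $O(n)$ in which such a clique grows and swallows everything that can still move. First I assume $\tau_H(G_0)\ge 1$, since otherwise there is nothing to prove; then some edge is added in the first step, so $G_1$ contains a copy of $H$, on a vertex set $W$ with $|W|=v$. Because the closure map $G\mapsto\final{G}_H$ is monotone under subgraph inclusion and $\final{H}_H=K_v$ by hypothesis, the subgraph $G_1[W]$ satisfies $\final{G_1[W]}_H=K_W$; and since there are only finitely many graphs on $v$ vertices, this is reached by the $H$-process on $G_1[W]$ within $c_0=c_0(H):=\max\{\tau_H(G')\colon v(G')=v\}$ steps. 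A routine coupling --- every edge added inside $W$ by the isolated process on $G_1[W]$ is also added, and no later, by the full process on $G_1$ --- then gives $G_{c_1}\supseteq K_W$ with $c_1:=1+c_0$. So after boundedly many steps we have a $v$-clique.

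The heart of the proof is to show that once $G_t\supseteq K_W$ with $|W|\ge v$, the process stabilises within $O(n)$ further steps, and this is where $\kappa(H-e)\le 2$ is used. Fix $e=xy\in E(H)$ and a vertex separator $S$ of $H-e$ with $|S|\le 2$, writing $V(H)=A\dcup S\dcup B$ with $A,B\neq\emptyset$ and no edge of $H$ between $A$ and $B$ other than possibly $e$. The key structural feature is that the side $A$ attaches to $V(H)\setminus A$ only through the at most $|S|+1\le 3$ vertices of $S\cup\{y\}$ (fewer when $e$ does not cross between $A$ and $B$, or lies inside $S$, or is a bridge --- a short case distinction). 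From this I would extract an \emph{absorption lemma}: if $G\supseteq K_W$ with $|W|\ge v$ and $u\notin W$ is sufficiently attached to $W$ --- directly, with at least $\delta(H)-1$ neighbours in $W$ (place a minimum-degree vertex $z$ of $H$ at $u$, map $N_H(z)$ into those neighbours together with one further vertex of $W$, and embed $H-z$ inside the clique), or indirectly, through a constant-size substructure near $u$ that realises $H[A]$ together with the bounded number of edges it must send to $W$ --- then within $O(1)$ steps $u$ becomes joined to all of $W$, and two such vertices also get joined to each other using the slightly larger clique. Iterating this from the $v$-clique $W_0$ above yields an increasing chain $W_0\subseteq W_1\subseteq\cdots$ with $G_{c_1+ci}\supseteq K_{W_i}$ for a constant $c=c(H)$.

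It remains to verify that this growing clique accounts for essentially the whole running time: if $W_i$ cannot be enlarged, the process is (almost) over. In a configuration $G_t\supseteq K_W$ with $W$ a maximal clique and no further absorption possible, every vertex outside $W$ has fewer than $\delta(H)-1$ neighbours inside $W$, so any copy of $H$ on the verge of being created would have to realise its part outside $W$ as an induced subgraph of $H$ of minimum degree at least $2$; combining this with the separator $S$ and with $\final{H}_H=K_v$ should force that copy to lie entirely inside $W$ --- a contradiction --- except for vertices that are permanently inert (never reaching degree $\delta(H)-1$, an issue only when $\delta(H)\ge 3$), which take part in no edge addition and can be discarded at the outset. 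Combined with the linear growth of the $W_i$ and a short induction on $n$ to handle several components (two components merge only when a new edge plays the role of a bridge-edge of $H$), this gives $\tau_H(G_0)\le c_1+cn=O(n)$.

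The step I expect to be hardest is this last one: making ``no absorption possible implies the clique grows (or the process halts)'' precise, i.e. ruling out copies of $H$ straddling the clique boundary while genuinely using a new edge, and organising the bookkeeping so that the absorption lemma applies uniformly no matter where $e$ sits relative to the separator $S$.
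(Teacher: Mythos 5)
Your overall strategy, growing a clique from a constant-sized seed, is the paper's strategy too, and your ignition phase (use $\final{H}_H=K_{v(H)}$ plus monotonicity of the process under subgraph inclusion to turn a first copy of $H$ into a $v(H)$-clique in $O(1)$ steps) is essentially the first part of the paper's argument. Where things go wrong is the absorption phase, and the gap is the one you yourself flag as the hardest step.

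The claim that ``$W$ a maximal clique with no vertex outside $W$ having at least $\delta(H)-1$ neighbours in $W$ implies the process is (almost) over'' is false as stated, and the hypotheses $\kappa(H-e)\le 2$ and $\final{H}_H=K_{v(H)}$ do not rescue it: a copy of $H-e'$ can be completed entirely among vertices outside $W$, far from the clique, and nothing forces such a copy to ``lie entirely inside $W$.'' Your degree threshold $\delta(H)-1$ controls which vertices can be absorbed directly, but gives no handle on edge additions that do not touch $W$ at all; discarding ``permanently inert'' vertices and doing induction on components does not help, because the troublesome vertices may lie in the same component as $W$ and still get new edges long after $W$ has stopped growing. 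What is missing is the device the paper uses instead of a maximality argument: one first extracts, purely from the definition of the process, a sequence $H_1,\ldots,H_\tau$ of copies of $H$ with $H_i$ completed at time $i$ and $|V(H_i)\cap V(H_{i+1})|\ge 2$ for every $i$ (by backward induction on $i$, using that the edge completing $H_{i+1}$ must be accompanied by another edge of $H_{i+1}$ that was itself added at time $i$). This chain is what ties \emph{every} edge addition to a connected, slowly growing set; $\kappa(H-e)\le 2$ then enters exactly once, to show that two overlapping cliques $U$ and $W$ of size $\ge v(H)$ with $|U\cap W|\ge 2$ merge into a clique on $U\cup W$ within $O_H(1)$ steps, by mapping the $2$-separator of $H-e$ into $U\cap W$, one side into $U$, one side into $W$, and then appealing to $\final{H}_H=K_{v(H)}$. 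Iterating this merging along the chain gives the $O(n)$ bound because the vertex set covered by the chain increases strictly at most $n$ times. Without the chain of overlapping $H_i$'s, your absorption lemma has no way to see, let alone bound, the steps occurring away from $W$, so the argument does not close.
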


It turns out that such a connectivity condition is in fact also \emph{necessary} in order to have a running time that is at most linear. Indeed, using our framework for chain constructions, in forthcoming work~\cite{FMSz3} we show that if $H$ is such that for any edge $e\in E(H)$, $\kappa(H-e)\geq 3$ (in particular if $\kappa(H)\geq 4$), then there exists $\eps_H>0$ such that $M_H(n)=\Omega(n^{1+\eps_H})$. This  extends Theorem \ref{thm:23bound} which can be seen as saying that in order for a connected graph $H$ which is not a cycle to have sublinear maximum running time, it needs to have vertex (and edge) connectivity 1. 

Although the condition that $\final{H}_H=K_{v(H)}$ in Theorem \ref{thm:girth_construction} is quite restrictive, the theorem can still be used to generate interesting examples. One such example is $K_5^-$, that is, $K_5$ minus a single edge, which is in contrast to $K_5$ which has (almost) quadratic running time \cite{balogh2019maximum}. We give further examples as follows.  

    \begin{figure}[h]
    \centering
    \includegraphics[width=0.4\linewidth]{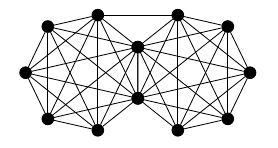}
    \caption{An illustration of the graph $H'_7$.  }
    \label{fig:connectivity}
    \end{figure}

    \begin{exmp} \label{ex:H'k}
    Let $k\geq 3$ and  $H'_k$ be the $(2k-2)$-vertex graph composed by `gluing together' two cliques of size $k$ along a singular edge $e$ and adding one more edge $e'$ between two non-adjacent vertices. (An illustration of $H'_7$ is given in Figure \ref{fig:connectivity}).    Then $M_{H'_k}(n)=\Theta(n)$. 
    \end{exmp}

    \begin{proof}
    The $H'_k$-process on $H'_k$ results in $K_{2k-2}$ after just one step of the process as any missing edge can play the role of $e'$ in a copy of $H'_k$. Note, moreover that $\kappa(H'_k-e')=2$ as removing the vertices of the edge $e$ disconnects $H'_k-e'$. Therefore $H'_k$ satisfies the assumptions of Theorem \ref{thm:girth_construction} and so  indeed $M_{H'_k}(n)=\Theta(n)$ where we used Theorem \ref{thm:23bound} for the lower bound on running time here. 
    \end{proof}

Note that $\delta(H'_k)=k-1$ and so we can generate graphs $H$ with arbitrarily large minimum degree $\delta(H)$ and linear $M_H(n)$. Therefore in order to generalise Theorem \ref{thm:23bound} to show that certain conditions are necessary for a running time which is \emph{at most linear}, one should focus on low connectivity conditions (as discussed above and shown in \cite{FMSz3}) rather than small degree conditions.

\subsection{Summary} \label{sec:summary}
Before embarking on our proofs, we briefly summarise our findings. Our theorem (Theorem \ref{thm:trees}) shows that trees $T$ have $M_T(n)$ bounded by a constant depending on $T$ and independent of $n$, and this constant can be taken to be quadratic in $v(T)$. The existence of small degree vertices and hence low connectivity in $H$  is \emph{necessary} for a sublinear running time (Theorem \ref{thm:23bound}). Small degree vertices stop being necessary when we consider $H$ with linear $M_H(n)$ (Example \ref{ex:H'k}) but low connectivity is still necessary to have running time at most linear (as shown in \cite{FMSz3}). Finally, neither small vertex degrees nor low connectivity are \emph{sufficient} to guarantee even a subquadratic running time (Theorem \ref{thm:simulate}) but with an extra condition concerning the $H$-process on itself, a bounded connectivity condition can recover a bound on running time (Theorem \ref{thm:girth_construction}).

\subsection*{Organisation of the paper.}
Necessary notation is introduced in Section \ref{sec:notation}.
The proof of  Theorem \ref{thm:trees} is  then given in Section \ref{sec:trees}.  Theorem \ref{thm:23bound} is proven in Section \ref{sec:min2max3} followed by a proof of Proposition \ref{prop:k2s_upper} in Section \ref{sec:k2s_upper}. Theorems \ref{thm:simulate} and \ref{thm:counterexample} are shown in Section \ref{sec:counterexample}. 
Finally Theorem \ref{thm:girth_construction} is proven in Section \ref{sec:girth} and in Section \ref{sec:conclude} we discuss various future directions of research.

\section{Notation and Preliminaries}\label{sec:notation}
Here we introduce some necessary terminology and notation and give a simple observation which will be useful throughout our proofs.  Firstly, for integers $0\leq i\leq j\in \NN$, we let $[i,j]=\{i,i+1,\ldots,j\}$ denote the discrete interval between $i$ and $j$.

\subsection*{Graph notation} For an edge $e\in E(H)$ we define $H-e$ as the graph obtained by removing $e$ from the edge set and similarly, if $e$ is an edge (of $K_{v(H)}$) on the same vertex set as $H$ then $H+e$ is the graph obtained by adding $e$ to $H$.

We denote the (external) \emph{disjoint union} of two graphs $G$ and $G'$, by $G \sqcup G'$, that is,
    \[
    G \sqcup G' := \left( (V\times\{1\}) \cup (V'\times\{2\}) \;,\; \{(x,1)(y,1) : xy \in E(G)\} \cup \{(x,2)(y,2) : xy \in E(G')\} \right).
    \]
If $G'\subseteq G$ is a subgraph of a graph $G$ and $X,Y$ are (not necessarily disjoint) subsets of\footnote{Note that we do \emph{not} require that $X$ and $Y$ are subsets of $V(G')$ here.} $V(G)$ we write
    \[
    E_{G'}(X,Y) = \left\lbrace  xy \in E(G') : x\in X, y \in Y \right\rbrace. 
    \]
We write $N_{G'}(v)$ for the set of neighbours of $v$ in $G'\subseteq G$ and $\deg_{G'}(v)=|N_{G'}(v)|$ for the degree of $v$ in $G'$. When  $G'\neq  G$ we sometimes write $G'$-neighbour or $G$-neighbour to emphasize the set of neighbours we are referring to.
We say that $v$ is \emph{universal} in $G$ if $N_G(v) = V(G)\setminus\{v\}$.

\subsection*{Graph bootstrap processes} 
Whenever the process $(G_i)_{i\geq 0}$ is clear from context, we say that a property of a graph holds \emph{at time} $i$ if $G_i$ has that property. We now give a simple observation that shows that processes behave well with respect to subgraphs. It can be proven easily by induction on $i\geq 0$.

    \begin{obs}\label{obs:hom}
    Let $ G' \subseteq G$  and let $(G'_i)_{i\geq0}$ and $(G_i)_{i\geq0}$ be the respective $H$-processes on $G'$ and $G$.
    Then $G'_i\subseteq G_i$ for every $i\geq 0$.
    \end{obs}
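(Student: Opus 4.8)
The plan is a routine induction on $i\geq 0$, with the only point requiring a little care being that $G'$ and $G$ may have different vertex sets, so throughout "$\subseteq$" is understood in the sense of subgraphs, i.e.\ $V(G'_i)\subseteq V(G_i)$ together with $E(G'_i)\subseteq E(G_i)$. The base case $i=0$ is immediate, since $G'_0=G'\subseteq G=G_0$ by hypothesis.

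For the inductive step I would assume $G'_{i-1}\subseteq G_{i-1}$ and take an arbitrary edge $e\in E(G'_i)$, aiming to show $e\in E(G_i)$; the vertex inclusion $V(G'_i)=V(G')\subseteq V(G)=V(G_i)$ holds since vertex sets are frozen along each process. By the definition of the process there are two cases. If $e\in E(G'_{i-1})$, then $e\in E(G_{i-1})\subseteq E(G_i)$ by the inductive hypothesis and the monotonicity $E(G_{i-1})\subseteq E(G_i)$. Otherwise $e\in\binom{V(G')}{2}\subseteq\binom{V(G)}{2}$ is a newly added edge, meaning $n_H(G'_{i-1}+e)>n_H(G'_{i-1})$, so there is a copy of $H$ in $G'_{i-1}+e$ using $e$. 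Since $G'_{i-1}\subseteq G_{i-1}$ gives $G'_{i-1}+e\subseteq G_{i-1}+e$, that same copy of $H$ sits inside $G_{i-1}+e$ and uses $e$; hence either $e\in E(G_{i-1})\subseteq E(G_i)$ already, or $n_H(G_{i-1}+e)>n_H(G_{i-1})$ and again $e\in E(G_i)$. This closes the induction.

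I do not expect a genuine obstacle here. The only things worth stating explicitly are that a newly added edge $e$ for the $G'$-process lies in $\binom{V(G')}{2}$ and is therefore an admissible candidate edge for the $G$-process as well, and that $n_H(\cdot)$ is monotone under taking subgraphs — both are immediate from the definitions, so the whole argument is essentially bookkeeping.
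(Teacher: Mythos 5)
Your proof is correct and takes exactly the route the paper indicates (it says only that the observation "can be proven easily by induction on $i\geq 0$" and gives no further detail). The one step requiring a moment's thought — that a copy of $H$ in $G'_{i-1}+e$ using $e$ persists in $G_{i-1}+e$ and hence certifies either $e\in E(G_{i-1})$ or $n_H(G_{i-1}+e)>n_H(G_{i-1})$ — is handled cleanly.
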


We say that a graph $G$ is $H$\emph{-stable} if $n_H(G+e) = n_H(G)$ for every $e\in \binom{V(G)}2\setminus E(G)$.
For any graph $G$ we define $\langle G \rangle_H$ to be the \emph{final graph} of the $H$-process on $G$.  By Observation \ref{obs:hom}, any $H$-stable graph containing $G$ must also contain every graph of the $H$-process on $G$. This implies the following observation. 

    \begin{obs} \label{obs:final}
    If $G$ is an $n$-vertex graph, $\langle G \rangle_H$ is the smallest (in terms of number of edges) $n$-vertex $H$-stable graph in which $G$ appears as a subgraph.
    \end{obs}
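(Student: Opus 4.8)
The plan is to verify the two defining features of ``smallest $n$-vertex $H$-stable graph containing $G$'' in turn: first that $\langle G \rangle_H$ is itself an $n$-vertex $H$-stable graph with $G$ as a subgraph, and then that it is contained in every other such graph, which in particular forces it to be minimal in edge count.

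For the first part, recall that $\langle G \rangle_H = G_\tau$ with $\tau = \tau_H(G)$. Since $E(G_i)$ is defined as a union containing $E(G_{i-1})$, we have $E(G_i) \supseteq E(G_{i-1})$ for every $i \geq 1$, and all $G_i$ share the vertex set $V(G)$; hence the chain $G = G_0 \subseteq G_1 \subseteq \cdots$ is increasing and in particular $G \subseteq G_\tau$. By the definition of $\tau$ we have $G_{\tau+1} = G_\tau$. Unravelling the recursion for $E(G_{\tau+1})$ and using that $n_H(\cdot)$ is \emph{monotone} under edge addition (adding an edge never destroys a copy of $H$), the equality $G_{\tau+1}=G_\tau$ says exactly that $n_H(G_\tau + e) = n_H(G_\tau)$ for every non-edge $e$, i.e.\ that $G_\tau$ is $H$-stable.

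For the second part, let $G'$ be any $n$-vertex $H$-stable graph with $G \subseteq G'$, and let $(G'_i)_{i \geq 0}$ be the $H$-process on $G'$. Because $G'$ is $H$-stable, a single step adds no edges, so the process is \emph{stationary}: $G'_i = G'$ for all $i \geq 0$. On the other hand $G \subseteq G'$, so Observation~\ref{obs:hom} gives $G_i \subseteq G'_i = G'$ for every $i \geq 0$; taking $i = \tau$ yields $\langle G \rangle_H \subseteq G'$. In particular $\langle G \rangle_H$ has at most as many edges as $G'$, and since $G'$ was an arbitrary $n$-vertex $H$-stable supergraph of $G$, this shows $\langle G \rangle_H$ is the smallest such graph (indeed it is contained in all of them).

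I do not expect a genuine obstacle here: the argument is essentially bookkeeping built on top of Observation~\ref{obs:hom}, which the excerpt already grants. The only two points needing a line of care are (a) observing that the $H$-process on an already $H$-stable graph is stationary, and (b) the elementary monotonicity of $n_H$ under edge addition, which is what converts ``$G_{\tau+1} = G_\tau$'' into the stated stability condition; both are immediate.
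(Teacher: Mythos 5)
Your proof is correct and follows the same route the paper takes: the paper's (one-sentence) justification is precisely the containment argument via Observation~\ref{obs:hom}, and you fill in the bookkeeping details — the process is increasing, $G_{\tau+1}=G_\tau$ together with monotonicity of $n_H$ gives $H$-stability, and Observation~\ref{obs:hom} applied to a stationary process on any $H$-stable supergraph gives the required containment.
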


\subsection*{Rooted trees}
Our proof of Theorem \ref{thm:trees} relies on the concept of a \emph{rooted tree}.
A rooted tree is a tree $T$ together with a designated vertex $z \in V(T)$. For two vertices $u,v\in V(T)$, we denote by $\dist_T(u,v)$ the number of edges in the unique path from $u$ to $v$ in $T$. 
For $u \in V(T)$ the neighbours of $u$ whose distance to $z$ is larger than $\dist_T(z,u)$ are called \emph{children} of $u$.
If $u\neq z$ the unique neighbour $v\in N_T(u)$ with $\dist_T(z,v) < \dist_T(z,u)$ is called the \emph{parent} of $u$.
The \emph{height} of a vertex $u$ of $T$ with root $z$, denoted $\height_z(u)$, is the length of a longest  path $u_0\ldots u_\ell$ with $u=u_0$ such that $u_i$ is a child of $u_{i-1}$ for $i\in [\ell]$.
The height of $T$, denoted $\height_z(T)$, is defined as the height of the root $z$.

\subsection*{Tree embeddings}
Given a copy $T_0\subseteq G$ of a tree (or, more generally, a forest) $T$ in a graph $G$ and vertices $x\in V(T_0)$, $y \in V(G)\setminus V(T_0)$ we define the graph $T_0^{(x\to y)}$ via 
    \[
    V(T_0^{(x\to y)}) = V(T_0)\setminus \{ x \} \cup \{ y \},
    \qquad
    E(T_0^{(x\to y)}) = E(T_0) \setminus \{ e \in E(T_0) : x \in e \} \cup \{ yz : z \in N_{T_0}(x) \}.
    \]
We can think of $T_0^{(x\to y)}$ as the graph obtained by replacing $x$ with $y$ in $T_0$.
Note that $T_0^{(x\to y)}$ is a subgraph of $G$ if and only if $N_{T_0}(x) \subseteq N_G(y)$.
If $\varphi: V(T) \to V(G)$ is an embedding of $T$ into $G$ with $\varphi(T) = T_0$ we write $\varphi^{(x\to y)}$ for the map
    \[
    V(T) \to V(T_0^{(x\to y)})
    \;,\; \qquad 
    v \mapsto \begin{cases}
        \varphi(v) &,\; \phi(v)\neq x; \\
        y &,\; \phi(v)=x.
    \end{cases}
    \]

\section{Trees }\label{sec:trees}
In this section, we will prove  Theorem \ref{thm:trees}. 
We first have a look at the case when $T$ is a star.

    \begin{lem} \label{lem:star}
    For $T=K_{1,t-1}$, we have  that $M_T(n)\leq t-1$. 
    \end{lem}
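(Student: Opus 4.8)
The plan is to show that for the star $T = K_{1,t-1}$, the $T$-bootstrap process on any starting graph $G$ reaches its final state in at most $t-1$ steps, by tracking how vertex degrees grow. The key observation is that adding an edge $e = uv$ to a graph $G'$ creates a new copy of $K_{1,t-1}$ if and only if $e$ completes a star centred at $u$ (using $v$ plus $t-2$ old neighbours of $u$) or symmetrically a star centred at $v$; equivalently, $e = uv$ is added in one step of the process precisely when $\deg_{G'}(u) \geq t-2$ or $\deg_{G'}(v) \geq t-2$. So the $K_{1,t-1}$-process has a very explicit local rule: at each step, join every pair of vertices at least one of which currently has degree $\geq t-2$.

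First I would set $d := t-2$ and let $A_i := \{ v \in V(G) : \deg_{G_i}(v) \geq d \}$ be the set of ``high-degree'' vertices at time $i$. From the local rule above, $E(G_{i+1}) = E(G_i) \cup \{ uv : u \in A_i \text{ or } v \in A_i \}$; in particular every vertex of $A_i$ becomes universal in $G_{i+1}$, and once a vertex is universal it stays universal and remains in $A_j$ for all larger $j$ (assuming $n-1 \geq d$, which holds since $n \geq 2t > t-1 > d$). Hence $(A_i)_{i \geq 0}$ is a nondecreasing sequence of sets, and the process has stabilised as soon as $A_i$ stops growing: if $A_{i+1} = A_i$ then no new edges can ever appear, since all edges incident to $A_i$ are already present in $G_{i+1} = G_i$, and every vertex outside $A_i$ has the same (too small) degree forever after. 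So $\tau_T(G)$ is at most the first time $i$ with $A_{i+1} = A_i$.

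Next I would bound how fast $A_i$ must grow while it is still growing. The main point: if $A_i \neq \emptyset$, pick any $w \in A_i$; then $w$ is universal in $G_{i+1}$, so every other vertex of $G$ has degree $\geq 1$ in $G_{i+1}$, and in fact any vertex already adjacent to something in $G_i$ now has its degree increased. More usefully, once $A_1 \neq \emptyset$, a vertex $w \in A_1$ is universal in $G_2$, so $\deg_{G_2}(v) \geq 1$ for all $v$; then in $G_3$ every vertex is adjacent to $w$ and to all of $A_1$, giving $\deg_{G_3}(v) \geq |A_1 \cup\{w\}|$... rather than chase this exact recursion, the clean argument is: as long as the process has not stabilised, some new edge is added at each step, and a new edge $uv$ added at step $i+1$ witnesses that $\deg_{G_i}(u) \geq d$ or $\deg_{G_i}(v) \geq d$ was already true — I instead track the minimum degree. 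Let $\delta_i := \min_{v} \deg_{G_i}(v)$. I claim that if the process has not yet stabilised at time $i \geq 1$ then $\delta_{i+1} \geq \delta_i + 1$ up until $\delta_i$ reaches $d$, after which every vertex lies in $A_i$ and $G_{i+1}$ is complete: indeed if $\delta_i < d$ then no vertex of minimum degree is ever an endpoint of a newly-added edge forced by itself, but it gains an edge to every high-degree vertex, and a short case analysis (using that the graph is nonempty once percolation has started, so $A_i \neq\emptyset$ forces every vertex to gain at least one neighbour, namely one in $A_i$) shows $\delta_{i+1} > \delta_i$. Starting from $\delta_1 \geq 0$, after at most $d$ further steps we reach $\delta \geq d = t-2$, at which point all vertices are high-degree and the next graph is $K_n$, which is $T$-stable. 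Counting: one step to get $A_1 \neq \emptyset$ (or the process is already trivial), then at most $t-2$ steps to raise the minimum degree to $t-2$, giving $\tau_T(G) \leq 1 + (t-2) = t-1$.

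The main obstacle is making the ``$\delta_i$ strictly increases'' claim fully rigorous, i.e.\ handling the bookkeeping of exactly which vertices gain which edges — in particular confirming that a minimum-degree vertex always gains at least one new neighbour as long as $A_i$ is nonempty and the graph is not yet complete, and that vertices not of minimum degree do not cause the minimum to stall. This requires being a little careful about the first step (where $A_0$ could be empty, in which case $\tau_T(G) = 0$ and there is nothing to prove) and about the edge case where several vertices share the minimum degree. Once that monotonicity of $\delta_i$ is pinned down, the bound $t-1$ follows by the simple count above.
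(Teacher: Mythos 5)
Your framework is sound up to a point: the local rule (an edge $uv$ is added precisely when $\deg(u)\geq t-2$ or $\deg(v)\geq t-2$), the definition of $A_i$ as the set of vertices of degree at least $t-2$, the observation that every vertex of $A_i$ becomes universal at time $i+1$, and the observation that $A_{i+1}=A_i$ forces the process to stop at time $i+1$ are all correct. (One small slip: $A_{i+1}=A_i$ does \emph{not} give $G_{i+1}=G_i$ — it gives $G_{i+2}=G_{i+1}$ — but this does not affect the conclusion.) The genuine gap is the central claim that $\delta_{i+1}\geq\delta_i+1$ whenever the process has not stabilised. This is false, and not just a matter of bookkeeping. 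Here is a counterexample with $t=6$ (so $d=4$): take $u$ adjacent to $v,a,b,c$; $w$ adjacent to $v,p,q$ (and not to $u$); $v$ adjacent only to $u,w$; add the extra edges $ab$ and $cp$; and put all remaining vertices $z_1,\dots,z_m$ on a long cycle. Then $A_0=\{u\}$, $u$ becomes universal at time $1$, $w$ reaches degree $4$ so $A_1=\{u,w\}$, and $\delta_1=2$ (witnessed by $v$ and $q$, both of which are adjacent to all of $A_1$). At time $2$ the only new edges are those incident to $w$; $v$ and $q$ gain nothing, so $\delta_2=2=\delta_1$, yet the process is far from over (the cycle vertices now enter $A_2$). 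The minimum-degree vertex can already be adjacent to everything in the current high-degree set, so the minimum degree can stall even while new vertices keep joining $A_i$.

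The right invariant to track is $|A_i|$ (or equivalently the number of universal vertices, which is the paper's choice): while the process continues, $A_i$ \emph{strictly} grows, because $A_{i+1}=A_i$ forces $G_{i+2}=G_{i+1}$. Combined with the fact that $\delta_i\geq|A_{i-1}|$ for $i\geq 1$ (every vertex is adjacent to the universal set $A_{i-1}$ at time $i$), one gets $\delta_{t-2}\geq t-2$ whenever $\tau\geq t-1$, hence $G_{t-1}=K_n$ and $\tau\leq t-1$. The paper's proof does essentially this but phrases it more directly: it calls a vertex \emph{central} at time $i$ if it is the centre of a star new at time $i$, notes that any such vertex becomes universal and therefore can be central at most once, concludes that each non-trivial step produces a newly universal vertex, and finishes by observing that once $t-2$ universal vertices exist the next step completes the graph. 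That argument sidesteps the minimum degree entirely, which is precisely where your version runs into trouble.
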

    \begin{proof}
    In the $K_{1,t-1}$-process $(G_i)_{i\geq 0}$ with some arbitrary initial graph $G$, we say a vertex is \emph{central} at time $i$ if it plays the role of the centre of a copy of a star that lies in $G_i$ but misses precisely one edge in $G_{i-1}$.
    Note that if a vertex is central at time $i$ then it is universal at time $j$ for all $j\ge i$ and hence cannot be central at some time $k>i$. Hence at every step $i$, if $G_i\neq G_{i-1}$, then there is some vertex $v$ that is central (and hence universal) at time $i$ that was not universal at time $i-1$.  Therefore, if $\tau_{K_{1,t-1}}(G)\ge t-2$, then $G_{t-2}$ contains at least $t-2$ universal vertices and $G_{t-1}$ must be a complete graph. 
    \end{proof}

Now moving to general trees $T$,  we show the existence of a vertex cover with certain properties, as given by the following lemma. Recall that a vertex cover is a set of vertices such that each edge has  at least one endpoint in that set. 

    \begin{lem}\label{lem:trees_vertexcover}
	For a tree $T$ and a root vertex $z\in V(T)$ that is not a leaf, there exists a subset $U=U(T;z)\subset V(T)$ such that
    	\begin{enumerate}[label=(\arabic*)]
		\item $U$ is a smallest vertex cover of $T$;
		\item No vertex in $U$ is a leaf of $T$; 
		\item A vertex $v\in V(T)$ lies in $U$ if and only if it has a child that is not contained in $U$. 
		\end{enumerate}
    \end{lem}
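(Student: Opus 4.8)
\textbf{Proof plan for Lemma~\ref{lem:trees_vertexcover}.}
The plan is to build the set $U$ greedily by processing $T$ from the leaves upwards with respect to the root $z$, and then verify the three properties hold. Concretely, I would proceed by induction on $\height_z(T)$, or equivalently process vertices in order of decreasing height. Initialise $U=\emptyset$. For each non-root vertex $v$ in decreasing order of $\dist_T(z,v)$ (or, cleanly, decreasing $\height_z(v)$), add the \emph{parent} of $v$ to $U$ whenever $v$ itself is not yet in $U$; equivalently, a vertex $w$ is placed in $U$ exactly when, at the time we process one of its children $v$, that child $v$ has not been put into $U$. This is designed precisely to force property~(3): $w\in U$ iff $w$ has a child outside $U$. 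One has to check the rule is well-defined, i.e.\ that whether a child $v$ lies in $U$ is already determined by the time we process $v$ — this holds because $v$ can only be added to $U$ as the parent of one of \emph{its} children, all of which have strictly larger height and are processed earlier (and $z\notin U$ matters only if $z$ were added as someone's parent, which cannot happen as $z$ has no parent; but $z$ may still be forced into $U$ as a parent of a child — that is fine and consistent, since $z$ is not a leaf).

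Next I would verify the properties in order. For property~(3): by construction every $w\in U$ was inserted as the parent of some child $v\notin U$, so it has a child outside $U$; conversely if $w$ has a child $v\notin U$, then at the moment $v$ was processed $w$ was added to $U$ (note $v\neq z$ since $z$ has no parent, so $v$ is indeed processed). So property~(3) holds. For property~(1), I first check $U$ is a vertex cover: take any edge $uv$ of $T$ with $u$ the parent of $v$; if $v\notin U$ then processing $v$ forced $u\in U$, so the edge is covered. Then I check minimality: the standard argument is to exhibit a matching $M$ of $T$ with $|M|=|U|$, since a vertex cover always has size at least that of any matching. For each $w\in U$, property~(3) gives a child $c(w)\notin U$; map $w\mapsto wc(w)$. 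These edges are distinct (different $w$ give edges with different $U$-endpoints) and form a matching: if $w_1c(w_1)$ and $w_2c(w_2)$ shared a vertex with $w_1\neq w_2$, the shared vertex would be $c(w_1)=w_2$ or $c(w_2)=w_1$ — but $c(w_i)\notin U$ while $w_j\in U$, contradiction. Hence $|M|=|U|$ and $U$ is a minimum vertex cover. For property~(2), suppose some leaf $\ell\in U$. A leaf that is not $z$ has no children, contradicting property~(3); and $\ell\neq z$ because $z$ is assumed not to be a leaf. So no vertex of $U$ is a leaf.

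The main obstacle I anticipate is not any single step but making the greedy/inductive bookkeeping airtight: one must be careful that the processing order is consistent (the status "$v\in U$ or not" is frozen before $v$ is used to decide its parent's membership), and that the edge cases around the root behave — in particular that $z$ may legitimately end up in $U$, that no leaf other than possibly $z$ could be in $U$, and that the matching $w\mapsto wc(w)$ is genuinely injective and vertex-disjoint. An alternative, perhaps cleaner write-up avoiding explicit ordering: define $U$ directly as the set of vertices having a child not in $U$ — but this is circular, so one really does want the leaves-up construction or an induction on $\height_z(T)$ where one roots each subtree at its top vertex, applies the inductive hypothesis to the forest of subtrees hanging below the children of $z$, and then patches in $z$ and its children according to which children landed in their subtree's cover. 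I would present the induction on height, as it makes the well-definedness transparent and the verification of (1)--(3) a routine case check on whether each child of the current root lies in the cover produced for its subtree.
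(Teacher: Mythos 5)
Your construction is correct, and it takes a genuinely different route from the paper. The paper's proof is an extremal argument: it chooses, among all minimum vertex covers, one that minimizes $\sum_{u\in U}\dist_T(u,z)$, and then derives property (3) by contradiction (removing a vertex or swapping it with its parent would improve the potential). You instead \emph{construct} $U$ explicitly by a bottom-up greedy sweep, taking property (3) as the defining rule, and then certify minimality by exhibiting a matching $\{wc(w):w\in U\}$ of size $|U|$ (so the vertex cover size $\geq$ matching size $=|U|$ forces optimality). Your approach is more constructive and a bit longer, but it makes the structure of $U$ transparent and gives a tight matching as a bonus; the paper's is shorter but non-constructive. Both are valid.

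One small slip: you twice describe the sweep as processing vertices in \emph{decreasing} $\height_z(\cdot)$, but that would process the root first. Leaves have height $0$, so a leaves-up sweep corresponds to \emph{increasing} height (equivalently decreasing $\dist_T(z,\cdot)$, which you also state, correctly). Since your narrative consistently says ``from the leaves upwards'' and the well-definedness argument (a vertex's membership in $U$ is settled once all its children, which have strictly smaller height, have been processed) is right, this is a wording error rather than a gap, but it should be fixed for a clean write-up. Everything else — the verification of (3) in both directions, the vertex-cover check using parent edges, the disjointness of the matching edges, and the leaf exclusion via (3) and the assumption $z$ is not a leaf — is sound.
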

    
    \begin{proof}
    Pick a smallest vertex cover $U$ that minimises $\sum_{u\in U}\dist_T(u,z)$ among all smallest vertex covers of $T$.
    Property (1) is satisfied by the choice of $U$.
    Property (2) is a direct consequence of (3) since leaves do not have children.
    The if direction of Property (3) holds because $U$ is a vertex cover.
    The only if direction of (3) will be achieved by contradiction:
    Suppose that $v\in U$ and all its children lie in $U$.
    Note that this includes the case when $v$ is a leaf.
    If $v$ is the root we can remove it from $U$ to arrive at a smaller vertex cover and hence at a contradiction.
    Suppose that $v\neq z$, and let $w$ be the parent vertex of $v$. If $w\in U$ then 
     $U\setminus\{ v \}$ is still a vertex cover and is smaller than $U$, which is a contradiction. Hence $w\notin U$ and $U\setminus\{ v \}\cup \{w\}$ is another smallest vertex cover.
    However,
    \begin{linenomath}     \begin{align*}
    \sum_{u\in U\setminus\{ v \}\cup\{ w \}} \dist_T(u,z) 
        = \sum_{u\in U}\dist_T(u,z) \;-\; \dist_T(v,z) + \dist_T(w,z) = \sum_{u\in U}\dist_T(u,z) \;-\; 1,
    \end{align*} \end{linenomath}
    which contradicts minimality of $U$.
    Therefore Property (3) must hold.
    \end{proof}

Next we define certain parameters which depend on our tree $T$ and our choice of root $z$ which is not a leaf of $T$. Let $U=U(T;z)$ be the vertex set given by Lemma \ref{lem:trees_almostuniversal}.  
We define
    \begin{equation}\label{eq:trees_mu}
    \mu=\mu (T;z) := M_{T[U]}(|U|).
    \end{equation}
That is, $\mu$ is the longest time the $T[U]$-process takes to stabilise when starting with an initial graph on $|U|$ vertices, where $T[U]$ is the forest induced by $T$ on the vertex cover $U$. 
    
Note that since $U$ is a smallest vertex cover of $T$, removing all but one of the vertices in $U$ from $T$ results in the disjoint union of a star and some isolated vertices. 
Let
    \begin{equation} \label{eq:trees_delta}
    \delta=\delta(T;z) := \min_{u\in U} |N_{T}(u)\setminus U|
    \end{equation}
be the smallest number of neighbours outside $U$ a vertex in $U$ can have.

We now give our main proposition, from which the proof of Theorem \ref{thm:trees} will follow. The proposition shows that if the $T$-process on some graph $G$ does not stabilise after some bounded number of steps (in terms of $\mu,\delta$ and $\height_z(T)$), then there will be many universal vertices. 

    \begin{prop} \label{prop:tree_many_univ}
    Let $T$ be a $t$-vertex tree with a root $z$ that is not a leaf and let $\mu=\mu(T;z)$, $\delta=\delta(T;z)$ and $U=U(T;z)$ be as defined in \eqref{eq:trees_mu}, \eqref{eq:trees_delta} and Lemma \ref{lem:trees_vertexcover} above. Moreover, let $(G_i)_{i\geq 0}$ be some $T$-process with $v(G_0)=n\geq 2t$ vertices.  Then if the process has not stabilised within the first 
        \begin{equation}\label{eq:trees_i*}
        i^* := 2+3\ceil{\height_z(T)/2}+\mu+\delta
        \end{equation}
    steps, $G_{i^*}$ contains $|U|+\delta-2$ universal vertices.
    \end{prop}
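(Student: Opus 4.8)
The plan is to exploit the structure of a copy of $T$ sitting in $G_1$ together with the chosen vertex cover $U$ and the associated forest $T[U]$. The key observation driving everything is that a non-leaf vertex of a copy of $T$ in $G_i$ tends to become \emph{almost universal} very quickly: if $u$ plays the role of a vertex in $U$ in some embedded copy $T_0$ of $T$, and $x$ is a leaf-child of $u$ in $T_0$, then for any $y\notin V(T_0)$ the graph $T_0^{(x\to y)}$ is again a copy of $T$ provided $N_{T_0}(x)=\{u\}\subseteq N_{G_i}(y)$, i.e.\ provided $y$ is adjacent to $u$. So one step after a copy of $T$ appears, every $U$-vertex $u$ of that copy is joined to all but at most $t$ vertices of $G$ (those outside its current neighbourhood can only be the few vertices of $V(T_0)$ that we cannot freely swap in). This is the mechanism hinted at in the discussion after Theorem~\ref{thm:trees}, and I would first isolate it as a clean sublemma: \emph{if a copy of $T$ (with its $U$-labelling) appears at time $i$, then at time $i+1$ every vertex playing a role in $U$ is universal} — indeed once $u$ is non-adjacent only to vertices of the old copy, one further swap (using $\delta\ge 1$ spare neighbours of $u$ outside $U$, guaranteed by the definition of $\delta$) removes those too.

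With that in hand, the rest is bookkeeping to produce $|U|+\delta-2$ universal vertices. Since the process has not stabilised by step $i^*$, there is an edge added at each of the first $i^*$ steps; in particular copies of $T$ keep appearing. I would run the argument in phases corresponding to the three terms $3\lceil \height_z(T)/2\rceil$, $\mu$, and $\delta$ in \eqref{eq:trees_i*}. Roughly: the height term accounts for how long it takes, starting from an arbitrary copy of $T$ in $G_1$, to "percolate down" the rooted tree so that \emph{all} $|U|$ cover-vertices of some fixed copy become universal — each level of the tree costs a bounded number of rounds because making the parent universal lets us re-embed the subtree hanging off it with lots of freedom, and the factor $3$ and the halving come from the fact that $U$ only meets every other level of a BFS layering. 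Once we have $|U|$ universal vertices forming (a superset of) a copy of $T[U]$, the $\delta$ spare edges at each $U$-vertex mean that $G$ restricted to the remaining vertices, together with these universal vertices, contains copies of $T[U]$ whenever the "outside" part does; so for the next $\mu$ steps the $H$-process on $G$ simulates a $T[U]$-process on $V(G)\setminus(\text{universal set})$, which by definition of $\mu=M_{T[U]}(|U|)$ stabilises — here one must be slightly careful that $n\ge 2t$ leaves enough room. After that, a short final argument (the $+\delta$ and the $-2$) converts the large $T[U]$-clique that has formed back into genuine universal vertices of $G$: each newly completed edge among the universal block, combined with $\delta$ pendant neighbours, yields a new copy of $T$ and hence (by the sublemma, one round later) a new universal vertex, and iterating $\delta$-many times gives the claimed count $|U|+\delta-2$.

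The main obstacle, and where I expect most of the work to go, is making the "percolate down the rooted tree" phase both correct and \emph{polynomial} in $t$ — i.e.\ showing that each level really only costs $O(1)$ rounds rather than something depending on the branching. The subtlety is that when we want to re-embed a subtree of $T$ below a vertex $u$ that has just become universal, the children of $u$ in $T$ need not be single leaves but could themselves be roots of nontrivial subtrees, and we need those subtrees to already be embeddable somewhere in $G_i$ with the root mapped to a neighbour of $u$; ensuring this inductively, while only spending a constant number of rounds per level and while keeping track of which vertices are "used up", is exactly the delicate combinatorial core the authors flag. I would handle it by a downward induction on $\height_z$, maintaining the invariant that at the appropriate time step there is a copy of the relevant top portion of $T$ all of whose $U$-vertices are universal, and using Lemma~\ref{lem:trees_vertexcover}(3) (every $U$-vertex has a non-$U$ child, every non-$U$ vertex's parent is in $U$) to control the alternation of levels; the star case, Lemma~\ref{lem:star}, gives the base of the induction and explains the $t-1$-type contribution hidden in $\mu$.

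Having settled the three phases, the final count $|U|+\delta-2$ and the exact constant in $i^*$ fall out of the arithmetic, and Theorem~\ref{thm:trees} then follows by combining Proposition~\ref{prop:tree_many_univ} with the trivial fact that once a graph on $n\ge 2t$ vertices has many universal vertices it is either complete or contains a copy of $T$ that forces one more step, bounding $M_T(n)$ by $i^*$ plus a small additive term and optimising the choice of root $z$.
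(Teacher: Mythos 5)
Your proposal has the right high-level shape (phases corresponding to the $\height$, $\mu$, and $\delta$ terms, a sublemma about vertices in $U$ becoming near-universal, an iteration at the end to collect extra universal vertices), but several of the key mechanisms are misstated, and one of them is genuinely wrong in a way that breaks the argument.

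The sublemma you isolate is over-claimed. You assert that once a copy $T_0$ of $T$ appears at time $i$, every vertex of $\phi(U)$ is \emph{universal} at time $i+1$: first swap a leaf-child to see that $u$ is adjacent to everything outside $V(T_0)$, then ``one further swap... removes those too.'' Neither half is right. Becoming adjacent to everything outside $V(T_0)$ takes $\lceil \height_z(T)/2\rceil$ rounds, not one, because the swap trick must propagate down the rooted tree level by level (this is exactly Lemma~\ref{lem:trees_almostuniversal} and Observation~\ref{obs:trees_core}: to swap out $\phi(v)$ for an external $w$, you need $w$ already adjacent to the children of $\phi(v)$, which is an inductive condition on height). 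More importantly, ``almost universal'' (adjacent to everything outside $V(T_0)$) is not universal, and there is no ``one further swap'' that fixes this: the edges between $U_1$ and $V(T_0)\setminus U_1$, and especially the edges inside $U_1$ itself, cannot be conjured from a single copy of $T$ since every embedding of $T$ with those vertices playing the role of $U$ has those same vertices non-adjacent. The actual proof has to go through two more substantial stages to get from almost-universal to universal, and the fact that this requires finding a \emph{second} copy $T_2$ of $T$ with $\phi_2(U)\neq U_1$ is completely absent from your proposal.

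Your account of the $\mu$ phase is also backwards. You say the process ``simulates a $T[U]$-process on $V(G)\setminus(\text{universal set})$.'' In fact the relevant $T[U]$-process lives on $U_1$ itself, and it is used by contradiction, not by simulation: once $U_1$ is completely joined to its complement, any new edge in the $T$-process either lies entirely inside $U_1$ or entirely outside it. If, over $\mu+1$ consecutive rounds, every newly completed copy of $T$ had its $U$-image equal to $U_1$, then the newly added edges would all lie inside $U_1$ and would trace out a $T[U]$-process on $|U|$ vertices lasting more than $\mu=M_{T[U]}(|U|)$ steps, which is impossible. This is how the existence of the second copy $T_2$ with $\phi_2(U)\neq U_1$ is forced, and that second copy is then what provides the ``external'' vertex $\phi_2(u)$ needed (via another application of Lemma~\ref{lem:trees_almostuniversal}) to fill in the missing internal edges of $U_1$ and make it genuinely universal. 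Without this, your phases do not connect, and in particular you never actually produce $|U|$ universal vertices. The final $\delta-2$ phase is closest to the paper's, but even there the mechanism is a degree-counting argument inside a newly completed copy of $T$ (showing some non-universal vertex must have $\geq\delta-\delta^*-2$ neighbours outside the current universal set), rather than ``each newly completed edge among the universal block yields a new copy of $T$.''
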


We now show how Theorem \ref{thm:trees} follows from Proposition \ref{prop:tree_many_univ}. 

    \begin{proof}[Proof of Theorem \ref{thm:trees}]
    Firstly if $T$ is a star the result follows from Lemma \ref{lem:star}, and so in what follows we will assume that $T$ is not a star. We claim that for any choice of root vertex $z$ of $T$ which is not a leaf, we have that 
        \begin{equation} \label{eq:trees_finalbutone}
        M_T(n)\leq 3+3\ceil{\height_z(T)/2}+\mu+\delta,
        \end{equation}
    with $\mu=\mu(T;z)$ and $\delta=\delta(T;z)$ as in \eqref{eq:trees_mu} and \eqref{eq:trees_delta}. Indeed suppose that $G_0$ is a choice of an $n$-vertex graph that maximises the  running time of the $T$-process $(G_i)_{i\geq 0}$. If the process has stabilised by time $i^* := 2+3\ceil{\height_z(T)/2}+\mu+\delta$, then \eqref{eq:trees_finalbutone} certainly holds. If not then, then by Proposition \ref{prop:tree_many_univ}, we have that 
    $G_{i^*}$ has a set $U^*$ of $|U|+\delta-2$ universal vertices. We will show that $G_{i^*+1}$ is complete and thus stable and so \eqref{eq:trees_finalbutone} holds. Indeed consider  some pair $x,y\in V(G_{i^*})$ of vertices that are non-adjacent vertices in $G_{i^*}$, then $xy$ completes a copy of $K_{1,\delta}$ with centre $x$ whose vertex set consists of $x$,$y$ and $\delta-1$ universal vertices.
    The remaining $|U|-1$ universal vertices together with $t-|U|-\delta$ vertices from $V(G)\setminus (U^*\cup\{x,y\})$ can be used to extend the copy of $K_{1,\delta}$ to a copy of $T$ by making the universal vertices together with $x$ play the role of the cover. As $x$ and $y$ were arbitrary, $G_{i^*+1}$ is a complete graph as required.

    The right hand side of \eqref{eq:trees_finalbutone} depends on the choice of root vertex $z$. We now show how to choose $z$ so that \eqref{eq:trees_finalbutone} implies the bound in Theorem \ref{thm:trees}. 
    Take a path $P$ of length $\diam(T)$ in $T$ and choose $z\in V(P)$ such that its distances to the endpoints of $P$ are $\floor{\diam(T)/2}$ and $\ceil{\diam(T)/2}$ (and hence $z$ is not a leaf).
    With this choice we have $\height_z(T) = \ceil{\diam(T)/2}$.
    The vertex cover $U=U(T;z)$ from Lemma \ref{lem:trees_vertexcover} contains at least $\ceil{\diam(T)/2}$ vertices from $P$.
    Any internal vertex of $P$ has at most two neighbours in $U\cap V(P)$ while the two endpoints of $P$ and any vertex in $V(T)\setminus V(P)$ each have at most one neighbour on $P$. Thus,
        \begin{linenomath} \begin{align*}
        \delta &= \min_{u\in U} |N_T(u)\setminus U| \leq \min_{u\in U\cap V(P)} |N_T(u)| \\
        &\leq \frac{|\{ (u,v) : u\in U\cap V(P), v\in V(T), uv\in E(T)\}|}{|U\cap V(P)|} \\
        &\leq \frac{2(\diam(T)-1)+t-(\diam(T)-1)}{\ceil{\diam(T)/2}} \\
        &\leq \frac{t-1}{\ceil{\diam(T)/2}} + 2.
        \end{align*} \end{linenomath}
    Combining this with \eqref{eq:trees_finalbutone} yields
        \begin{linenomath} \begin{equation}\label{eq:trees_final}
        M_T(n) \leq 3 + 3\ceil*{\frac{\diam(T)} 4} +  \frac{t-1}{\ceil{\diam(T)/2}} + 2 + \mu \leq \frac{29}{4}+\frac{3\diam(T)}{4}+\frac{2(t-1)}{\diam(T)}+\mu .
        \end{equation} \end{linenomath} 
    The diameter lies between $3$ and $t-1$ and $t\geq 4$ because $T$ is not a star, and we can bound $\mu$ by $t^2/8$ since the vertex cover number of any tree on $t$ vertices is at most $t/2$ and $M_{T[U]}(|U|) \leq \binom{|U|}2$.
    The right hand side of \eqref{eq:trees_final} is maximised when $\diam(T) = t-1$ (using here that $t\geq 4$). Therefore,
        \[
        M_T(n) \leq \frac{29}{4}+\frac{3(t-1)}{4}+2+ \frac{t^2}8\leq \frac 1 8 \cdot (t^2 + 6t + 68),
        \]
    as required. 
    \end{proof}

It remains to prove Proposition \ref{prop:tree_many_univ}. 
Before embarking on this,  we give some further useful tools for our proof. 
	
    \begin{obs}\label{obs:trees_core}
    Suppose $(G_i)_{i\geq 0}$ is some $T$-process for a tree $T$.     Let $T_0\subset G_i$ be a copy of $T$ for some $i\in\NN$, and let $x\in V(T_0)$ and $x'\in V(G)\setminus V(T_0)$. If $y\in N_{T_0}(x)$ is such that all $T_0$-neighbours of $x$ but $y$ are adjacent to $x'$ at time $i$, then $x'y \in E(G_{i+1})$ because it completes $T_0^{(x\to x')}$.
    \end{obs}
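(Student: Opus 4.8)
The statement to prove is Observation~\ref{obs:trees_core}, which is a short, self-contained fact about tree bootstrap processes. The plan is to verify directly that $T_0^{(x\to x')}$ is a subgraph of $G_i$ and that adding the edge $x'y$ to it produces a copy of $T$, so that $x'y$ must be added at time $i+1$.

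First I would recall the definition of $T_0^{(x\to x')}$ from the ``Tree embeddings'' subsection: it has vertex set $V(T_0)\setminus\{x\}\cup\{x'\}$ and is obtained from $T_0$ by deleting all edges incident to $x$ and adding the edges $\{x'z : z\in N_{T_0}(x)\}$. The excerpt already records that $T_0^{(x\to x')}$ is a subgraph of $G$ (here $G_i$) if and only if $N_{T_0}(x)\subseteq N_{G_i}(x')$. By hypothesis, every $T_0$-neighbour of $x$ other than $y$ is adjacent to $x'$ at time $i$, i.e.\ $N_{T_0}(x)\setminus\{y\}\subseteq N_{G_i}(x')$. Hence $T_0^{(x\to x')}-x'y$, the graph obtained from $T_0^{(x\to x')}$ by deleting the single edge $x'y$, has all of its edges present in $G_i$; that is, it is a subgraph of $G_i$. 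Note this graph is isomorphic to $T-e$ where $e = xy\in E(T_0)$ is the edge of $T_0$ corresponding to $xy$ under the copy of $T$.

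Next I would observe that $T_0^{(x\to x')}$ is itself isomorphic to $T$: it is the image of $T$ under the map $\phi^{(x\to x')}$ described in the excerpt, which is a bijection since $x'\notin V(T_0)$, and it preserves the edge set by construction. Therefore adding the edge $x'y$ to the subgraph $T_0^{(x\to x')}-x'y\subseteq G_i$ completes a copy of $T$, namely $T_0^{(x\to x')}$. In particular $x'y\notin E(G_i)$ would imply $n_T(G_i + x'y) > n_T(G_i)$ (the copy $T_0^{(x\to x')}$ is new since it uses the edge $x'y$), so by the definition of the $T$-process $x'y\in E(G_{i+1})$; and if $x'y\in E(G_i)$ already then trivially $x'y\in E(G_{i+1})$ since the process only adds edges. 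Either way $x'y\in E(G_{i+1})$, as claimed.

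There is essentially no main obstacle here --- the statement is a direct unpacking of the ``Tree embeddings'' notation together with the definition of the bootstrap process. The only small point requiring care is the edge case $x'y\in E(G_i)$ (so that $T_0^{(x\to x')}$ might already be present), which is handled trivially by monotonicity of the process; one should also note that $y\neq x'$ since $y\in V(T_0)$ and $x'\notin V(T_0)$, and that $x\neq x'$ for the same reason, so the definitions of $T_0^{(x\to x')}$ and $\phi^{(x\to x')}$ are sensible. It is worth spelling these out explicitly so that the observation can be invoked freely in the proof of Proposition~\ref{prop:tree_many_univ}.
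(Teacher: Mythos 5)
Your proof is correct and follows exactly the same route the paper intends: the observation is stated as self-justifying ("because it completes $T_0^{(x\to x')}$"), and your argument is simply the careful unpacking of the tree-embedding notation and the definition of the $T$-process that this justification presupposes, including the harmless edge case $x'y\in E(G_i)$.
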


    \begin{figure}
    \centering \label{fig:switch}
    \includegraphics[width=0.7\linewidth]{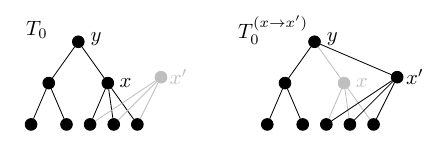}
    \caption{The situation of Observation \ref{obs:trees_core} for a concrete choice of $T_0$. On the left we have a copy $T_0$ of $T$ (black vertices and edges) at some time $i$. The vertex $x'$ is adjacent in $G$ to all $T_0$-neighbours of $x$ but $y$. On the right we see the copy $T_0^{(x\to x')}$ that is completed by $x'y$ at time $i+1$.}
    \label{fig:treeobservation}
    \end{figure}
	
See Figure \ref{fig:switch} for an example demonstrating Observation \ref{obs:trees_core}. The next lemma tells us that in a copy of $T$ the vertices corresponding to the elements of the cover $U$ given by Lemma \ref{lem:trees_vertexcover}, become \emph{almost universal} in the sense that they are adjacent to every vertex outside the copy of $T$.
	
    \begin{lem}\label{lem:trees_almostuniversal}
    Let $T$ be a tree with a root $z$ which is not a leaf and let $U=U(T;z)$ be the set of vertices of $T$ given by Lemma \ref{lem:trees_vertexcover}. Furthermore, let $(G_i)_{i\geq 0}$ be some $T$-process,   let $i_0\in\NN$ and let $T_0\subset G_{i_0}$ be a copy of $T$ with an isomorphism $\phi: T\to T_0$. Then at time $i_0+\ceil{\height_z(T)/2}$, every vertex in $\phi(U)$ is adjacent to every vertex in $V(G)\setminus V(T_0)$.
    \end{lem}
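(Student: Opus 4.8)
The plan is to fix once and for all the copy $T_0=\phi(T)\subseteq G_{i_0}$ and an arbitrary vertex $w\in V(G)\setminus V(T_0)$, and to prove by induction on $\height_z(u)$ that $\phi(u)w\in E(G_{i_0+\ceil{\height_z(u)/2}})$ for every $u\in U$. Since edges are never deleted in the process, $T_0$ remains a copy of $T$ inside every $G_i$ with $i\ge i_0$, so this makes sense; and concatenating the (downward) path from $z$ to $u$ with a longest downward path out of $u$ produces a downward path from $z$ of length $\dist_T(z,u)+\height_z(u)$, whence $\dist_T(z,u)+\height_z(u)\le\height_z(z)=\height_z(T)$ and in particular $\height_z(u)\le\height_z(T)$. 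As the graphs $G_i$ are nested, the displayed claim then yields that every vertex of $\phi(U)$ is a neighbour of $w$ already at time $i_0+\ceil{\height_z(T)/2}$, which is precisely the statement of the lemma.

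For the inductive step I would fix $u\in U$ and apply Lemma~\ref{lem:trees_vertexcover}(3) twice: it provides a child $c$ of $u$ with $c\notin U$, and, applied to $c$, it shows that every child of $c$ lies in $U$. Write $C''$ for this (possibly empty) set of children, so $C''\subseteq U$. Every $c''\in C''$ is a grandchild of $u$ in the rooted tree, hence $\height_z(c'')\le\height_z(c)-1\le\height_z(u)-2$ and therefore $\ceil{\height_z(c'')/2}\le\ceil{\height_z(u)/2}-1$. Since $u\in U$ is not a leaf we have $\height_z(u)\ge1$, so $j:=i_0+\ceil{\height_z(u)/2}-1\ge i_0$; by the induction hypothesis (applicable as $\height_z(c'')<\height_z(u)$ and $c''\in U$) together with monotonicity of the process, every $\phi(c'')$ with $c''\in C''$ is a neighbour of $w$ at time $j$.

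I would then invoke Observation~\ref{obs:trees_core} for the copy $T_0\subseteq G_j$, with $x:=\phi(c)$ as the vertex to be replaced, $x':=w$, and $y:=\phi(u)\in N_{T_0}(\phi(c))$. Since $N_T(c)$ consists of the parent $u$ together with the children in $C''$, we get $N_{T_0}(\phi(c))\setminus\{\phi(u)\}=\{\phi(c''):c''\in C''\}$, and we have just seen that every vertex of this set is adjacent to $w$ at time $j$ (a vacuous condition when $C''=\emptyset$, i.e.\ when $c$ is a leaf). Observation~\ref{obs:trees_core} then delivers $\phi(u)w\in E(G_{j+1})=E(G_{i_0+\ceil{\height_z(u)/2}})$, which closes the induction.

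The argument is short, so I do not anticipate a genuinely hard step; the points requiring care are choosing $\height_z(u)$ as the quantity to induct on and noticing that passing from $u$ to its grandchildren in $C''$ halves the relevant height, which is exactly what accounts for the $\ceil{\cdot/2}$ in the bound, together with the degenerate cases $C''=\emptyset$ (handled by the vacuous hypothesis of Observation~\ref{obs:trees_core}) and $u=z$ (handled identically, since the argument never refers to a parent of $u$). The feature that makes the recursion close is property (3) of the vertex cover from Lemma~\ref{lem:trees_vertexcover}: for each cover vertex $u$ it supplies a non-cover child $c$ all of whose own children are back in the cover.
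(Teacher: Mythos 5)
Your proof is correct and follows essentially the same argument as the paper's: the claim $\phi(u)w\in E(G_{i_0+\ceil{\height_z(u)/2}})$ is exactly the paper's induction (the paper parametrises by the time increment $i$ with $\height_z(u)\in\{2i-1,2i\}$, you parametrise by $\height_z(u)$ — these are the same), and the inductive step is the same application of Observation~\ref{obs:trees_core} with $x=\phi(c)$, $x'=w$, $y=\phi(u)$ for a non-cover child $c$ of $u$. The only cosmetic difference is that you deduce $C''\subseteq U$ from property~(3) of Lemma~\ref{lem:trees_vertexcover} applied to $c$, whereas the paper deduces it more directly from $U$ being a vertex cover; both are valid.
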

	
    \begin{proof}
    Let $w\in V(G)\setminus V(T_0)$. We show by induction on $i$ that $\phi(u)w\in E(G_{i_0+i})$ for every $0\leq i \leq \ceil{\height_z(T)/2}$ and every $u\in U$ with $\height_z(u) \in \{ 2i-1,2i \}$. This claim holds vacuously for $i=0$ because $U$ does not contain any leaves (cf. property (2) of Lemma \ref{lem:trees_vertexcover}).
	
    Given $u\in U$ with $\height_z(u) \in \{ 2i-1,2i \}$ where $i\geq 1$, there exists a child $v$ of $u$ that is not contained in $U$ by Lemma  \ref{lem:trees_vertexcover} (3).
    As $U$ is a cover all children of $v$ lie in $U$, so their images under $\phi$ are adjacent to $w$ at time $i_0+i-1$ due to the induction hypothesis. Observation \ref{obs:trees_core} with $x=\phi(v)$, $x'=w$, $y=\phi(u)$ implies $\phi(u)w\in E(G_{i_0+i})$. This completes the induction hypothesis and hence the proof of the claim.
    \end{proof}

We are now in a position to prove Proposition \ref{prop:tree_many_univ} and thus complete the proof of Theorem \ref{thm:trees}. 

    \begin{proof}[Proof of Proposition \ref{prop:tree_many_univ}]
    Fix $T$ with a root $z$ that is not a leaf and $U$, $\mu$ and $\delta$ as in the statement of the proposition. Let $(G_i)_{i\geq 0}$ be a  $T$-process  with $v(G_0)=n\geq 2t$ and suppose that it does not stabilise in the first $i^*$ steps (otherwise we are done), that is,  
    $E(G_{i+1})\setminus E(G_i)\neq\emptyset$ for $0\leq i \leq i^*$.
    Further, fix a copy $T_1\subseteq G_1$ of $T$, an isomorphism $\phi_1: T \to T_1$, and a set $U_1 := \phi_1(U)$.
    Let us rewrite $i^*$ in the following slightly more cumbersome form:
        \begin{equation} \label{eq:i*}
        i^* = \left( 1 + 2\ceil*{\frac{\height_z(T)}2} \right) + (\mu +1) + \left( 1 + \ceil*{\frac{\height_z(T)}2} \right) + 1+ (\delta-2).
                \end{equation}
    The summands on the right hand side correspond to different stages of the process. We will prove the following statements for each of these stages. 
        \begin{enumerate}[label=(\roman*)]
        \item \label{stage1} At time $i_1:=1+2\ceil*{\frac{\height_z(T)}2}$ there is a complete bipartite graph between $U_1$ and $V(G)\setminus U_1$.
        \item \label{stage2} After at most $\mu+1$ more steps we can find a copy $T_2$ of $T$ and an isomorphism $\phi_2:T\to T_2$ such that $\phi_2(U)\neq U_1$.
        \item \label{stage3} After at most $1+\ceil*{\frac{\height_z(T)}2}$ further steps, $U_1$ will be a set of universal vertices.
        \end{enumerate}
    If $\delta\leq 2$, then we  take $U_1$ as our collection of universal vertices. Note that when $\delta=1$, the contribution of  $\delta-2$ in \eqref{eq:i*} is negative, which is why we require the $+1$ term to cancel this out. In the case that $\delta>2$, we do one more stage as follows.
        \begin{enumerate}[label=(\roman*)] \addtocounter{enumi}{3}
        \item \label{stage4} Once there is a set of $|U|$ universal vertices, $\delta-2$ additional universal vertices will occur within at most $\delta-2$ steps.
        \end{enumerate}
    Note that once we have proved the claims \ref{stage1}-\ref{stage4}, the proof of the proposition will be complete as we will have found the required number of universal vertices in the first $i^*$ steps. We are  now going to prove the claims \ref{stage1}-\ref{stage4} sequentially.

    \vspace{2mm}

    \textbf{Stage \ref{stage1}:}
    At time $1+\ceil{\height_z(T)/2}$ every vertex in $U_1$ is adjacent to every vertex in $V(G)\setminus V(T_1)$ by Lemma \ref{lem:trees_almostuniversal}. It remains to show the existence of the edges between $U_1$ and $V(T)\setminus U_1$  and so fix some arbitrary choice of $u\in U_1$ and  $w\in V(T_1)\setminus U_1$. Moreover, fix some arbitrary $w'\in V(G)\setminus V(T_1)$, and note that $T_1^{(w\to w')}$ is contained in $G_{1+\ceil{\height_z(T)/2}}$. Indeed all of the edges adjacent to $w$ in $T_1$ must have their other endpoint in $U_1$ (as $U$ is a cover) and all vertices in $U_1$ are adjacent to $w'$ at time $1+\ceil{\height_z(T)/2}$. 
    Lemma \ref{lem:trees_almostuniversal} applied to $T_1^{(w\to w')}$ then guarantees that in  $G_{1+2\ceil{\height_z(T)/2}}$, $u$ and $w$ are adjacent and so, as $u$ and $w$ were arbitrary,  the sets $U_1$ and $V(G)\setminus U_1$ are the partite sets of a complete bipartite graph.

    \vspace{2mm}

    \textbf{Stage \ref{stage2}:}
    For $i\geq i_1+1$ all edges in $E(G_i)\setminus E(G_{i-1})$ have either both their endpoints in $U_1$ or both their endpoints in $V(G)\setminus U_1$.	
    We want to choose a copy $T_2$ of $T$ and an isomorphism $\phi_2: T\to T_2$ such that $T_2$ is completed by an edge $e_2$ at time $i_2$ for some $i_2\in\NN$ with $i_1+1 \leq i_2 \leq i_1+1 + \mu$ and $\phi_2(U) \neq U_1$. 
    Suppose that for every $i_2$ in the above range and every choice of $T_2$, $\phi_2$, $e_2$ one has $\phi_2(U) = U_1$.
    In that case $e_2$ has an endpoint in $U_1$ and hence must be fully contained in $U_1$. 
    Then $T_2[U_1]$ is a copy of $T[U]$ in $G_{i_2}$ with $T_2[U_1]-e_2 \subset G_{i_2-1}$.
    Moreover, any copy of $T[U]-e$ for some $e\in E(T[U])$ in $G_{i_2-1}[U_1]$ can be extended to a copy of $T-e$ using an arbitrary set of $t-|U|$ vertices from $V(G)\setminus U_1$.
    Therefore, the graphs $G_{i_1+i}[U_1]$, $0\leq i \leq \mu+1$, are the first $\mu+2$ elements (including the starting graph)  of the $T[U]$-process on $G_{i_1}[U_1]$.
    This however contradicts the definition of $\mu$   \eqref{eq:trees_mu} as no $T[U]$-process on $|U|$ many vertices can last longer than $\mu$ steps before stabilising. Hence there must be some choice of  $i_2$ in the desired range as well as $T_2$,  $\phi_2$ and $e_2$ such that $e_2$ lies on the vertices $V(G)\setminus U_1$ and $\phi_2(U)\neq U_1$.  We fix such  a choice of $i_2,T_2,\phi_2$ and $e_2$. 

    \vspace{2mm}

    \textbf{Stage \ref{stage3}:}
    Pick $u\in U$ such that $\phi_2(u) \notin U_1$ and $\height_z(u)$ is minimised among all vertices in $U\setminus\phi_2^{-1}(U_1)$.
    Let $v$ be a child of $u$ in $T$ that does not lie in $U$ which exists by Lemma \ref{lem:trees_vertexcover} (3).
    By the minimality of $\height_z(u)$, the children of $\phi_2(v)$, if there are any, lie in $U_1$ and thus are adjacent to all vertices in $V(G)\setminus U_1$ due to stage \ref{stage1}.
    Consequently, Observation \ref{obs:trees_core} with $x=\phi_2(v)$, $x'=w$ for $w\in V(G)\setminus (U_1\cup V(T_2))$ and $y=\phi_2(u)$ implies $T_2^{(\phi_2(v)\to w)} \subset G_{i_2+1}$ for every $w\in V(G)\setminus (U_1\cup V(T_2))$ so $\phi_2(u)$ is adjacent to every vertex in $V(G)\setminus (U_1\cup V(T_2))$  at time $i_2+1$.
    Choose an arbitrary set $W \subset V(G)\setminus (U_1\cup V(T_2))$ of size\footnote{This is possible because $n\geq 2t$.} $t-|U_1|$ and a map $\phi_3: V(T) \to U_1 \cup W$ such that $\phi_3(u) = \phi_1(u)$ for all $u\in U$.
    Since $V(T)\setminus U$ is an independent set and all edges between $U_1$ and $V(G)\setminus U_1$ are present at time $i_2$, in particular those between $\phi_2(u)$ and $U_1$, $T_3 := \phi_3(T)$ is a copy of $T$ in $G_{i_2+1}$ that lies in the $G_{i_2+1}$-neighbourhood of $\phi_2(u)$.
    Then for every $u' \in U_1$, we can replace $u'$ by $\phi_2(u)$ in $T_3$ to obtain another copy of $T$, that is, $T_3^{(u'\to \phi_2(u))} \subset G_{i_2+1}$.
    Now Lemma \ref{lem:trees_almostuniversal} implies that at time $i_2+1+\ceil{\height_z(T)/2}$ every $u'\in U_1$ is adjacent to every vertex in $\phi_3^{(u'\to \phi_2(u))}(U) = U_1\setminus\{ u' \} \cup \{ \phi_2(u) \}$ and hence is a universal vertex.
    Recall that $i_2+1+\ceil{\height_z(T)/2} \leq 3 + 3\ceil{\height_z(T)/2} + \mu$.
    We have shown that $U_1$ is a set of universal vertices at time $3 + 3\ceil{\height_z(T)/2} + \mu$.

    \vspace{2mm}

    \textbf{Stage \ref{stage4}:}
    The remaining $\delta-2$ universal vertices will be obtained by applying the next claim $(\delta-2)$ times.

        \begin{clm}\label{clm:trees_finaliteration}
        Let $i \in\NN$, $0\leq \delta^*<\delta-2$ and assume that $G_i$ has precisely $|U|+\delta^*$ universal vertices. Then there exists $v\in V(G)$ which is universal in $G_{i+1}$ but not in $G_i$.
        \end{clm}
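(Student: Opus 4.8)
The plan is to prove the slightly stronger statement that \emph{every} non-universal vertex of $G_i$ becomes universal in $G_{i+1}$. First note that $|U|+\delta^* \le |U|+\delta-2 \le t-2 < n$, since $V(T)\setminus U$ contains at least the $\delta$ neighbours outside $U$ of the vertex realising the minimum in \eqref{eq:trees_delta}; hence $G_i$ is not complete and a non-universal vertex $v$ exists. Fixing such a $v$ and letting $Z$ denote the set of $|U|+\delta^*$ universal vertices of $G_i$, it suffices to show that for every non-neighbour $w$ of $v$ in $G_i$ the edge $vw$ lies in $E(G_{i+1})$, i.e. to produce inside $G_i$ a copy of $T$ minus an edge that $vw$ completes, so that Observation \ref{obs:trees_core} (equivalently, the definition of the process) applies.

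To build this copy I would use the vertex cover: pick $u^* \in U$ with $|N_T(u^*)\setminus U| = \delta$ and a vertex $p \in N_T(u^*)\setminus U$, which exists by Lemma \ref{lem:trees_vertexcover}(3). Embed $T$ into $G_i\cup\{vw\}$ by sending $u^*\mapsto v$, $p \mapsto w$, sending $U\setminus\{u^*\}$ injectively into $Z$ (possible since $|Z|\ge |U|>|U|-1$), sending the $\delta-1$ remaining neighbours $N_T(u^*)\setminus(U\cup\{p\})$ of $u^*$ to neighbours of $v$ in $G_i$ (using the $\delta^*+1$ not-yet-used vertices of $Z$ first), and sending the remaining vertices of $V(T)\setminus U$ to fresh vertices avoiding $v$. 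Because $U$ is a vertex cover of $T$ and $U\setminus\{u^*\}$ lands in universal vertices, every edge of $T$ not incident to $u^*$ is realised; the edges at $u^*$ run from $v$ to universal vertices (images of $N_T(u^*)\cap U$) or to the chosen neighbours of $v$, except for $u^*p$, which is exactly the edge $vw$ completes. A routine count using $n\ge 2t$ shows that there are enough vertices for the embedding to be injective, and then $vw\in E(G_{i+1})$; as $w$ was an arbitrary non-neighbour, $v$ is universal in $G_{i+1}$. Iterating the claim $\delta-2$ times then finishes Stage \ref{stage4}.

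The delicate point — and the one I have not fully pinned down, so I expect it to be the crux — is the supply of neighbours of $v$ in the third step: $v$ is adjacent to all $|U|+\delta^*$ vertices of $Z$, of which $|U|-1$ are used for $U\setminus\{u^*\}$, leaving $\delta^*+1$, so the $\delta-1$ slots are covered outright precisely when $\delta^* = \delta-2$ (in particular whenever $\delta\le 3$), and in those cases the argument above is complete. When $\delta^* < \delta-2$ one additionally needs $v$ to have $\delta-2-\delta^*\ge 1$ \emph{non-universal} neighbours in $G_i$, and it is here that I would invoke the fact that $G_i$ is not yet $H$-stable (valid in the application, since Stage \ref{stage4} takes place within the first $i^*$ steps): a missing edge whose insertion creates a new copy of $T$ forces a copy of $T-e$ in $G_i$ with an endpoint of $e$ in $U$, and I expect one can read off from this configuration (or from a judicious choice of which vertex to promote to universal) the extra non-universal neighbours needed to carry the construction through. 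This extremal/counting step is the one requiring the most care; the remainder is a direct application of Observation \ref{obs:trees_core}.
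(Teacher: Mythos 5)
Your proposal correctly identifies the right embedding ($u^*\mapsto v$, $p\mapsto w$, the rest of the cover into universal vertices), and you correctly isolate the crux --- you need $v$ to have enough non-universal $G_i$-neighbours to fill the $\delta-1$ leaf slots around $u^*$ once the $|U|-1$ remaining universals are spent. But you do not close that gap, and the way you frame the target makes it worse: you set out to prove that \emph{every} non-universal vertex of $G_i$ becomes universal in $G_{i+1}$, and this ``stronger statement'' is almost certainly false (a vertex whose only $G_i$-neighbours are the $|U|+\delta^*$ universals has, by your own accounting, no supply of extra slots when $\delta^*<\delta-2$, and there is no reason such a vertex must gain all its missing edges in one step). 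The claim is an \emph{existence} statement, and that existence is the entire content. Incidentally, the parenthetical ``in particular whenever $\delta\leq 3$'' is wrong: $\delta=3$, $\delta^*=0$ already needs $\delta-1=2$ slots but leaves only $\delta^*+1=1$ spare universal.

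The missing piece is precisely the extremal/counting step you wave at in the last paragraph, and it is not a minor verification. The paper's route is: reformulate the goal as finding some $v\notin U^*$ with at least $\delta-\delta^*-2$ $G_i$-neighbours \emph{outside} $U^*$ (such a $v$ then seats the $K_{1,\delta-1}$ and your embedding goes through). Assuming no such $v$ exists, take a copy $T_0\subseteq G_{i+1}$ of $T$ that was just completed, with isomorphism $\phi$. Since $U$ is a vertex cover and a universal vertex cannot gain a new neighbour, $\phi(U)\setminus U^*\neq\emptyset$. The contradiction hypothesis forces every $v\in\phi(U)\setminus U^*$ to have strictly more than $\delta^*+2$ of its $T_0$-neighbours lying in $U^*\setminus\phi(U)$ (because it has at least $\delta$ $T_0$-neighbours outside $\phi(U)$, and fewer than $\delta-\delta^*-2$ of them can avoid $U^*$). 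Counting these edges from both sides --- at least $(3+\delta^*)|\phi(U)\setminus U^*|$ of them, but at most $|\phi(U)\setminus U^*|+|V(T_0)\cap U^*\setminus\phi(U)|-1$ since $T_0$ is a tree --- and combining with $|V(T_0)\cap U^*\setminus\phi(U)|\leq|\phi(U)\setminus U^*|+\delta^*$ (from $|U^*|=|U|+\delta^*$) gives $(2+\delta^*)|\phi(U)\setminus U^*|<|\phi(U)\setminus U^*|+\delta^*$, impossible for a nonempty set. None of this tree-specific double counting appears in your sketch; ``read off from this configuration the extra non-universal neighbours'' is a placeholder for the whole argument, not a proof of it.
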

        \begin{proof}
        Let $U^*$ be the set of universal vertices in $G_i$.
        Any vertex in $V(G)\setminus U^*$ with at least $\delta-\delta^*-2$ $G_i$-neighbours outside $U^*$ is the centre of a copy of $K_{1,\delta-1}$ using at most $\delta^*+1$ vertices from $U^*$. Therefore such a vertex  will be universal in $G_{i+1}$ by the definition of $\delta$ \eqref{eq:trees_delta}.
        We now show that such a vertex exists. 
        Suppose it did not, i.e. $|N_{G_i}(v) \setminus U^*|< \delta-\delta^*-2$ for each $v\in V(G)\setminus U^*$. Take a copy $T_0\subset G_{i+1}$ of $T$ which is missing precisely  one edge in $G_{i}$  and fix an isomorphism $\phi : T \to T_0$. The set $\phi(U)\setminus U^*$ is non-empty because $U$ is a vertex cover of $T$ and a universal vertex of $G_i$ cannot receive a new neighbour at time $i+1$. Every $v\in \phi(U)\setminus U^*$ satisfies
            \[
            \left|N_{T_0}(v) \cap U^*\setminus \phi(U)\right| = \left|N_{T_0}(v)\setminus \phi(U)\right|- \left|\left(N_{T_0}(v)\setminus\phi(U)\right)\setminus U^*\right|\geq \delta - (\delta-\delta^*-2) = \delta^* + 2,
            \]
            using here the definition of $\delta$ \eqref{eq:trees_delta}.
        Thus, summing over $v\in \phi(U)\setminus U^*$, we get that
            \[
            (2+\delta^*)\cdot |\phi(U)\setminus U^*| \leq \left| E_{T_0}\left(\phi(U)\setminus U^*, U^*\setminus \phi(U)\right) \right|
            \]
        and, since $T_0$ is a tree, 
            \[
            \left| E_{T_0}(\phi(U)\setminus U^*, U^*\setminus \phi(U)) \right| \leq |\phi(U)\setminus U^*| + |V(T_0) \cap U^*\setminus \phi(U)| - 1.
            \]
        Combining the last two estimates results in
            \begin{linenomath} \begin{equation}\label{eq:trees_edgecounting}
            (1+\delta^*)\cdot |\phi(U)\setminus U^*| \leq |V(T_0) \cap U^*\setminus \phi(U)| - 1.
            \end{equation} \end{linenomath} 
        However, we also have
            \[
            |V(T_0) \cap U^*\setminus \phi(U)| + |\phi(U)\cap U^*| \leq |U^*| =|U|+\delta^*
            \]
        and hence
            \begin{linenomath} \begin{equation}\label{eq:trees_blockedplaces}
            |V(T_0) \cap U^*\setminus \phi(U)|\leq |U|- |\phi(U)\cap U^*| + \delta^* = |\phi(U)\setminus U^*|+\delta^*.
            \end{equation} \end{linenomath} 
        The inequalities \eqref{eq:trees_edgecounting} and \eqref{eq:trees_blockedplaces} together imply
            \[
            (1+\delta^*)\cdot |\phi(U)\setminus U^*| < |\phi(U)\setminus U^*|+\delta^*,
            \]
        which is a contradiction as $\phi(U)\setminus U^*$ is non-empty. Therefore we indeed have a vertex in $V(G)\setminus U^*$ with at least $\delta-\delta^*-2$ $G_i$-neighbours outside $U^*$ and so a new universal vertex at time $i+1$. 
        \end{proof}

    As $U_1$ consists of universal vertices at time $3+3\ceil*{\height_z(T)/2}+\mu$, Claim \ref{clm:trees_finaliteration} guarantees the existence of at least $|U|+\delta-2$ universal vertices in $G_{1+3\ceil*{\height_z(T)/2}+\mu+\delta}$, completing the proof of Proposition \ref{prop:tree_many_univ}. 
    \end{proof}

\section{Small degrees are necessary for sublinear running time }\label{sec:min2max3}
In this section, we will prove Theorem \ref{thm:23bound}.
Before embarking on the proof we give the following simple lemma.  

    \begin{lem}\label{lem:growing_clique}
    Let $H$ be a graph, $r:=v(H)-1$ and $\delta:=\delta(H)$. Further, suppose $n\geq r+1$ and  $G$ is an $n$-vertex graph with an ordering $v_1,\ldots,v_n$ of its vertices such that $\{ v_1,\ldots,v_r \}$ is a clique and for every $i\in [r+1,n]$, $|\{ j \in [i-1] : v_jv_i \in E(G) \}|\geq \delta-1$, i.e. every vertex but the first $r$ is adjacent to at least $\delta-1$ vertices preceding it.
    Then in the $H$-process $(G_i)_{i\geq 0}$ on $G$, for every $i\geq 0$, $\{ v_1,\ldots, v_{r+i} \}$ is a clique in $G_i$. In particular, $G_{n-r}=K_n$. 
    \end{lem}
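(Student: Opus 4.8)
The plan is to prove the statement by induction on $i\geq 0$, showing that $\{v_1,\ldots,v_{r+i}\}$ forms a clique in $G_i$. The base case $i=0$ is immediate from the hypothesis that $\{v_1,\ldots,v_r\}$ is a clique in $G=G_0$. For the inductive step, suppose $\{v_1,\ldots,v_{r+i}\}$ is a clique in $G_i$; we want to show $\{v_1,\ldots,v_{r+i+1}\}$ is a clique in $G_{i+1}$. Since $G_i\subseteq G_{i+1}$, it suffices to show that $v_{r+i+1}$ is adjacent in $G_{i+1}$ to all of $v_1,\ldots,v_{r+i}$.

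First I would observe that by the degree hypothesis applied to the vertex $v_{r+i+1}$ (which is legitimate as long as $r+i+1\leq n$, i.e. $i\leq n-r-1$; once $r+i=n$ the clique is already all of $K_n$ and there is nothing more to prove), the vertex $v_{r+i+1}$ has at least $\delta-1$ neighbours in $G_0\subseteq G_i$ among $\{v_1,\ldots,v_{r+i}\}$. Call this set of neighbours $S$, so $|S|\geq\delta-1$. Now fix an arbitrary vertex $v_j$ with $j\leq r+i$ and $v_j\notin S$; I want to show $v_jv_{r+i+1}\in E(G_{i+1})$. Since $\{v_1,\ldots,v_{r+i}\}$ is a clique in $G_i$ of size $r+i\geq r = v(H)-1$, I can pick a copy of $H$ inside this clique in which some fixed vertex $w$ plays the role of a minimum-degree vertex of $H$, making sure that $w\neq v_j$ and that the $\deg_H(w)\leq$ ... actually more carefully: I select $v(H)-1$ vertices from $\{v_1,\ldots,v_{r+i}\}\setminus\{v_j\}$ together with $v_j$ to host a copy of $H$ in the clique, with $v_j$ placed at a vertex $x$ of $H$ with $\deg_H(x)=\delta$, and such that the $\delta$ neighbours of $x$ in this copy all lie in $S$ — this is possible because $|S|\geq\delta-1$ together with $v_{r+i+1}$ itself... hmm, I need to be slightly careful about the bookkeeping here.

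Let me restructure the inductive step. Embed a copy $H_0$ of $H$ on vertex set $\{v_{r+i+1}\}\cup S'$ where $S'\subseteq S$ has size $\delta-1$, placing $v_{r+i+1}$ at a vertex $x$ of $H$ of degree $\delta$ and its $H$-neighbours at $\{v_j\}\cup$ (the $\delta-1$ vertices of $S'$) — wait, this needs $x$'s neighbourhood to be $\{v_j\}\cup S'$, so I embed the closed neighbourhood of $x$ onto $\{v_{r+i+1},v_j\}\cup S'$ and the remaining $v(H)-\delta-2$ vertices of $H$ onto any other vertices of the clique $\{v_1,\ldots,v_{r+i}\}$ (distinct from $v_j$ and from $S'$); this is possible since $r+i\geq r=v(H)-1$ gives enough room. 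All edges of $H_0$ except those incident to $x$ lie within the clique $\{v_1,\ldots,v_{r+i}\}$ and hence are present in $G_i$; the edges incident to $x$ are $v_{r+i+1}v_j$ (missing, possibly) and the $\delta-1$ edges from $v_{r+i+1}$ to $S'\subseteq S$, which are present in $G_0\subseteq G_i$. Thus $H_0-v_{r+i+1}v_j\subseteq G_i$ is a copy of $H$ minus an edge, and adding $v_{r+i+1}v_j$ completes a copy of $H$, so $v_{r+i+1}v_j\in E(G_{i+1})$. As $v_j$ was arbitrary among the non-neighbours, $\{v_1,\ldots,v_{r+i+1}\}$ is a clique in $G_{i+1}$, completing the induction. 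Finally, setting $i=n-r$ gives that $\{v_1,\ldots,v_n\}$ is a clique in $G_{n-r}$, i.e. $G_{n-r}=K_n$.

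The main obstacle I anticipate is purely the bookkeeping in the embedding step: making sure that when $\delta$ is small (say $\delta=1$ or $2$) the degenerate cases still work — e.g. if $\delta=1$ then $S'=\emptyset$ and $x$ has a single neighbour which is $v_j$, and one must check the remaining vertices of $H$ can be placed; and one should double-check the edge case $\delta-1 > r+i$ cannot occur, which follows since $\delta-1\leq v(H)-1=r\leq r+i$. No genuinely hard idea is needed beyond Observation~\ref{obs:hom} and careful counting.
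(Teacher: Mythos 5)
Your proposal is correct and takes essentially the same approach as the paper: both induct on $i$ and, in the inductive step, use the fact that $v_{r+i}$ has at least $\delta-1$ preceding neighbours in the clique $\{v_1,\ldots,v_{r+i-1}\}$ to embed $v_{r+i}$ as a minimum-degree vertex of a copy of $H$, so that any missing edge from $v_{r+i}$ into that clique completes $H$. (A minor arithmetic slip: after placing the closed neighbourhood of $x$ you have $v(H)-\delta-1 = r-\delta$ remaining vertices of $H$ to place, not $v(H)-\delta-2$; this does not affect the argument since the clique has at least $r$ vertices.)
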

    \begin{proof}
    We induct on $i\geq 0$. The base case is given by the definition of $G=G_0$. For $i\geq 1$, suppose the statement holds up to time $i-1$, and so $G_{i-1}$ hosts a clique $K$ on vertices $\{ v_1,\ldots, v_{r+i-1} \}$. As $v_{r+i}$ has $\delta-1$ neighbours in $K$, any edge  missing between $v_{r+i}$ and $K$ in $G_{i-1}$ can be used to complete a copy of $H$ with $v_{r+i}$  being a vertex of degree $\delta$. Hence in $G_i$ there is a clique on vertices $\{ v_1,\ldots, v_{r+i} \}$, completing the induction step and the proof. 
    \end{proof}

Lemma \ref{lem:growing_clique} will be useful for us in giving a lower bound on the running time of a $H$-process. Indeed, we will use a starting graph $G$ satisfying the hypothesis of the lemma and show that the  clique number increases by at most a constant in each step of the $H$-process. Lemma \ref{lem:growing_clique} guarantees that the process will eventually percolate, that is, that the final graph will be $K_n$. These two facts together imply a linear running time. The details follow. 

    \begin{proof}[Proof of Theorem \ref{thm:23bound}]
    Let $H$ be a connected graph with $\delta:=\delta(H)\geq 2$ and $\Delta(H)\geq 3$ as in the statement of the theorem. Further, fix $r:=v(H)-1$ and for $n$ sufficiently large, we fix $\ell := \floor{\frac{n-r}{(\delta-1)^r}}$ and $n'=r+\ell\cdot(\delta-1)^r\leq n$. 

    We will build an $n'$-vertex starting graph $G$ which will be the union of a $K_r$, a graph $G'$ which we will define in a moment and a complete bipartite graph between the vertices of $K_{r}$ and some vertices of $G'$.  Let $\ZZ_t=\{1,\ldots,t\}$ denote the cyclic group with $t$ elements and denote the standard basis vectors of $\ZZ_{\delta-1}^{r}$ by $e_1,\ldots,e_r$. For $r < j \leq \ell$, let $e_j := e_{j \bmod r}$. 
    Let $G'$ be given by
        \begin{linenomath}\begin{equation*}
        V(G') = [\ell] \times \ZZ_{\delta-1}^{r}, \qquad
        E(G') = \left\lbrace \{ (j,x) \, , \, (j+1, x +\lambda e_j) \} : j \in [\ell-1], x\in\ZZ_{\delta-1}^{r}, \lambda\in \ZZ_{\delta-1} \right\rbrace.
        \end{equation*} \end{linenomath}
    It is the union of $\ell-1$ pairwise isomorphic $(\delta-1)$-regular bipartite graphs with partite sets $\{ j \} \times \ZZ_{\delta-1}^{r}$ and $\{j+1\} \times \ZZ_{\delta-1}^{r}$ for $1\leq j \leq \ell-1$. 

        \begin{clm}\label{clm:min2max3_free}
        For any $e\in E(H)$ and any connected component $H'$ of $H-e$, $G'$ is $H'$-free.
        \end{clm}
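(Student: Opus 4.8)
The plan is to split into cases on $\delta:=\delta(H)$; the only substantial case reduces to a "layer growth" argument inside the bipartite graph $G'$.

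\emph{Easy cases and the shape of $H'$.} If $\delta=2$ then $\ZZ_{\delta-1}$ is trivial and $G'$ is a path on $\ell$ vertices, whose connected subgraphs are all paths, so it suffices to check that $H'$ is never a path. If $H'=P_k$ with $k\ge 3$, its two endpoints have degree $1$ in $H-e$ and hence degree $\ge 2$ in $H$, so both are endpoints of $e$; then $e$ joins the ends of $P_k$, forcing $H=C_k$ and contradicting $\Delta(H)\ge 3$; the cases $k\le 2$ are excluded since $\delta(H)\ge 2$ forbids isolated vertices and $K_2$-components of $H-e$. One also notes $G'$ is bipartite (classes: the even and the odd layers $\{j\}\times\ZZ_{\delta-1}^r$), so the case of non-bipartite $H'$ is immediate. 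Thus assume $\delta\ge 3$ and $H'$ bipartite, and write $e=uv$. Every vertex of $H'$ other than $u,v$ has the same degree in $H'$ as in $H$, hence degree $\ge\delta$, while $u,v$ (if in $H'$) have $H'$-degree $\ge\delta-1\ge 2$; so $\delta(H')\ge 2$, at most two vertices of $H'$ have degree $<\delta$, and $H'$ contains a cycle so $v(H')\ge 3$; picking $w\in V(H')\setminus\{u,v\}$ gives a vertex of degree $\ge\delta$, whence $v(H')\ge\delta+1\ge 4$ and $\Delta(H')\ge\delta\ge 3$.

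\emph{Structure of $G'$ and the position of a copy.} The first observation is that $(j,x)$ and $(j+1,y)$ are adjacent in $G'$ exactly when $x$ and $y$ agree in all coordinates except the single coordinate $c(j)\in[r]$ in which $e_j$ is supported; hence between consecutive layers $G'$ is a disjoint union of complete bipartite graphs $K_{\delta-1,\delta-1}$, one per coset of $\langle e_j\rangle$ --- call these the \emph{$j$-blocks}. In particular every vertex has exactly $\delta-1$ neighbours in each neighbouring layer, and the coordinates $c(j)$ run through $[r]$ cyclically with period $r$. Now suppose $G'$ contained a copy $F\cong H'$, meeting exactly the layers in an interval $[p,q]$; set $L:=q-p+1$ and $V_i:=V(F)\cap(\{i\}\times\ZZ_{\delta-1}^r)$, all nonempty for $p\le i\le q$ by connectivity. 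Then $L\le v(F)=v(H')\le v(H)=r+1$, and $L\le r$, since $L=r+1$ would make each $V_i$ a singleton and $F$ a path, contradicting $\Delta(F)\ge 3$; thus $c(p),\dots,c(q-1)$ are pairwise distinct. A vertex of $F$ in layer $p$ (or $q$) has all its $F$-neighbours in the one adjacent layer inside $[p,q]$, so has degree $\le\delta-1$; as $H'$ has at most two such vertices, the layers $p$ and $q$ together contain at most two vertices, so $V_p=\{s\}$ and $V_q=\{t\}$, while each of the $\ge v(H')-2\ge 2$ vertices of $F$ of degree $\ge\delta$ lies in an internal layer --- in particular $L\ge 3$. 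Finally $\deg_F(s)=\delta-1$, its neighbourhood lies in a single $p$-block, hence is a full $p$-block side; and $N_F(s)=V_{p+1}$, since a vertex of $V_{p+1}$ not adjacent to $s$ would have no $F$-neighbour in $V_p=\{s\}$ and, being internal, degree $\ge\delta$ --- impossible.

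\emph{Layer growth and contradiction.} The crux is the estimate $|V_{p+j}|\ge(\delta-1)|V_{p+j-1}|$ for $1\le j\le L-1$, which forces $1=|V_q|\ge(\delta-1)^{L-1}\ge(\delta-1)^2\ge 4$, a contradiction. I would obtain it from the invariant, proved by induction on $j\ge 1$: \emph{(a)} the vertices of $V_{p+j}$ agree in every coordinate outside $\{c(p),\dots,c(p+j-1)\}$, and \emph{(b)} every vertex of $V_{p+j}$ has exactly one $F$-neighbour in layer $p+j-1$, hence --- if $p+j$ is an internal layer, so the vertex has degree $\ge\delta$ --- a full $(p+j)$-block side worth of $F$-neighbours in layer $p+j+1$. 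For $j=1$ both parts follow from the previous paragraph. For the step, (a) at $j-1$ forces $V_{p+j-1}$ to contain at most one vertex per $(p+j-1)$-block (two such would in addition agree in coordinate $c(p+j-1)\notin\{c(p),\dots,c(p+j-2)\}$, hence coincide), so each vertex of $V_{p+j}$ has at most one $F$-neighbour below --- giving (b); and the up-neighbourhood of a vertex $x$ of $V_{p+j-1}$ is exactly the set of vertices agreeing with $x$ in all but coordinate $c(p+j-1)$, so passing up only frees that one extra coordinate --- giving (a). By (a), distinct vertices of $V_{p+j-1}$ sit in distinct $(p+j-1)$-blocks, so by (b) their up-neighbourhoods are pairwise disjoint full block sides of size $\delta-1$, yielding $|V_{p+j}|\ge(\delta-1)|V_{p+j-1}|$.

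The main obstacle is precisely this last paragraph: one has to bookkeep carefully which coordinates the vertices of each successive layer are permitted to vary in, so as to secure simultaneously that every vertex has a unique neighbour in the layer below and that the upward "fans" emanating from distinct vertices of a layer do not collide. Everything else is a routine unpacking of the hypotheses $\delta(H)\ge 2$, $\Delta(H)\ge 3$ and the connectivity of $H$.
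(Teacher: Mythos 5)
Your proof is correct and takes essentially the same approach as the paper: both argue that a hypothetical copy of $H'$ in $G'$ would be forced, layer by layer, into the structure of a full $(\delta-1)$-ary tree rooted at the unique vertex of the first layer, using the linear independence of $e_p,\dots,e_{q-1}$ over a window of at most $r$ consecutive indices to keep the upward fans disjoint. The only cosmetic difference is in how the final contradiction is phrased — the paper observes that the resulting tree would have a leaf while $\delta(H')\geq\delta-1\geq 2$, whereas you note that the forced growth $|V_{p+j}|\geq(\delta-1)|V_{p+j-1}|$ compounded over $L-1\geq 2$ steps contradicts $|V_q|=1$; these are the same fact seen from two angles.
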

        \begin{proof}
        Let $e\in E(H)$, and let $H'$ be a connected component of $H-e$. There must be a cycle in $H'$.
        If not, $H'$ would be a tree and would thus have at least two leaves. Since $H-e$ can have at most two vertices of degree $\delta-1$, $H'$ would have precisely two leaves, that is, $H'$ would be a path. The endpoints of $H'$ would be the endpoints of $e$ because $\delta = 2$. But then $H' + e $ would be a cycle, so $H$ would have maximum degree two, which contradicts our assumptions $\Delta(H)\geq 3$.
    
        If $\delta = 2$, $G'$ is just a path of length $\ell-1$ so in this case it is clearly $H'$-free.
        For the rest of the proof of Claim \ref{clm:min2max3_free} we assume that $\delta \geq 3$.
        Suppose there was a copy of $H'$ in $G'$. Denote that copy  by $H_0'$. Let
            \begin{linenomath}\begin{align*}
            j_0 &:= \min \left\lbrace j \in [\ell] : V(H_0') \cap (\{ j \}\times \ZZ_{\delta-1}^r) \neq \emptyset \right\rbrace, \\
            j_1 &:= \max \left\lbrace j \in [\ell] : V(H_0') \cap (\{ j \}\times \ZZ_{\delta-1}^r) \neq \emptyset \right\rbrace,
            \end{align*} \end{linenomath}
        and for $j\geq j_0$, define 
            \[
            H_0'[j_0,j] := H_0'\left[V(H_0')\cap ([j_0,j]\times\ZZ_{\delta-1}^r)\right].
            \]
        There exists a path from a vertex in $\{j_0\}\times\ZZ_{\delta-1}^r$ to a vertex $\{j_1\}\times\ZZ_{\delta-1}^r$ in $H_0'$.
        Such a path must intersect $\{ j \}\times \ZZ_{\delta-1}^r$ for $j_0 \leq j \leq j_1$.
        Thus $j_1 - j_0 \leq r$.
        Since $H-e$ has at most two vertices of degree $\delta-1$ we get
            \[
            \left| 	V(H_0')\cap (\{ j_0 \}\times\ZZ_{\delta-1}^r)  \right|= 1 = \left| V(H_0')\cap (\{ j_1 \}\times\ZZ_{\delta-1}^r) \right|.
            \]
        Take the unique $x_0,x_1 \in \ZZ_{\delta-1}^r$ with $(j_0,x_0),(j_1,x_1)\in V(H_0')$. 

        Now we show that for every $j\in [j_0,j_1]$, $H_0'[j_0,j]$ must be a full $(\delta-1)$-ary tree with root $(j_0,x_0)$ whose set of leaves is $V(H_0')\cap (\{ j \}\times\ZZ_{\delta-1}^r)$.
        We induct on $j\in [j_0,j_1]$.
        As $\left| V(H_0')\cap (\{ j_0 \}\times\ZZ_{\delta-1}^r)  \right|= 1$, $H_0'[j_0,j_0]$ is just an isolated vertex and $H_0'[j_0,j_0+1]$ is a star with $\delta-1$ leaves.
        Let $j_0+1 < j \leq j_1$. Every vertex $(j-1,x)$ in $V(H_0')\cap (\{ j-1 \}\times\ZZ_{\delta-1}^r)$ has precisely one $H_0'[j_0,j-1]$-neighbour by the induction hypothesis.
        Consequently, the remaining $H_0'$-neighbours of $(j-1,x)$ must lie in $(\{ j \}\times\ZZ_{\delta-1}^r)$. There are at least $\delta-1$ of them so the $H_0'$-neighbours of $(j-1,x)$ in $\{ j \}\times\ZZ_{\delta-1}^r$ are precisely its $\delta-1$ $G'$-neighbours in $\{ j \}\times\ZZ_{\delta-1}^r$. It remains to prove that the neighbourhoods of any two distinct $(j-1,x), (j-1,y) \in V(H_0')$ are disjoint.
        The unique paths from $(j-1,x)$ and $(j-1,y)$ to the root $(j_0,x_0)$ in $H_0'[j_0,j-1]$ amount to $\lambda_{j_0},\ldots,\lambda_{j-1},\mu_{j_0},\ldots,\mu_{j-1}\in \ZZ_{\delta-1}$ with $(\lambda_{j_0},\ldots,\lambda_{j-1}) \neq (\mu_{j_0},\ldots,\mu_{j-1})$ and 
            \[
            x = x_0 + \lambda_{j_0} e_{j_0} +\ldots+ \lambda_{j-1} e_{j-1} \qquad,\qquad y = x_0 + \mu_{j_0} e_{j_0} +\ldots+ \mu_{j-1} e_{j-1}.
            \]
        Thus for every $z\in\ZZ_{\delta-1}^r$ with $x-z = \lambda_j e_j$ for some $\lambda_j\in\ZZ_{\delta-1}$ one has
            \[
            y-z = y-x + x-z = (\mu_{j_0}-\lambda_{j_0}) e_{j_0} +\ldots+ (\mu_{j-1}-\lambda_{j-1}) e_{j-1} +  \lambda_j e_j \notin \ZZ_{\delta-1}\cdot e_j.
            \]
        Here we used that $j-j_0 \leq j_1 - j_0 \leq r$ so $e_{j_0},\ldots,e_j$ are linearly independent.
        This completes the induction.
	
        As $H_0'=H_0'[j_0,j_1]$, we have arrived at a contradiction since trees have minimum degree one while $\delta(H')\geq\delta-1\geq 2$. Therefore $G'$ is $H'$-free.
        \end{proof}

    We now define the $n'$-vertex starting graph $G$ by
        \begin{linenomath}\begin{align*}
        V(G) &= \{ v_1,\ldots,v_r \} \ \cup \  V(G'), \\
        E(G) &= \binom{\{ v_1,\ldots,v_r \}}2 \cup E(G') \cup \left\lbrace \{ v_j, (1,x) \} : j\in[r], x\in\ZZ_{\delta-1}^r \right\rbrace,
        \end{align*} \end{linenomath}
    where $v_1,\ldots,v_r$ are $r$ newly introduced vertices. Let $(G_i)_{i\geq 0}$ be the $H$-process on $G=G_0$. Pick an ordering of the vertices of $V(G)$ such that $v_1,\ldots,v_r$ are the first $r$ vertices and for $j\in[\ell-1]$ each vertex in $\{ j \}\times\ZZ_{\delta-1}^r$ precedes each vertex in $\{ j+1 \}\times\ZZ_{\delta-1}^r$. With such an ordering $G$ satisfies the hypotheses of Lemma \ref{lem:growing_clique}.
    Write
        \begin{linenomath}\begin{align*}
        G'[j,\ell] &:= G'\left[V(G')\cap ([j,\ell]\times\ZZ_{\delta-1}^r)\right] \;\text{ and } \\
        G_i[j,\ell] &:= G_i\left[V(G)\cap ([j,\ell]\times\ZZ_{\delta-1}^r)\right]
        \end{align*} \end{linenomath}
    for any $j\in[\ell]$, $i\geq 0$, and let $i_j$ be the smallest positive integer such that $E(G_{i_j})\setminus E(G')$ contains an edge touching $[j,\ell]\times\ZZ_{\delta-1}^r$. The $i_j$ are non-decreasing and well-defined as Lemma \ref{lem:growing_clique} guarantees that every vertex of $G$ receives a new neighbour at some stage of the process.
    We claim that $i_j > i_{j-r}$ for all $j \in [r+1,\ell]$. Suppose there existed $j\in [r+1,\ell]$ with $i_j = i_{j-r}$. At time $i_j-1$ we can find a copy $H'$ of $H-e$ for some $e\in E(H)$ with a vertex $w$ in $[j,\ell]\times \ZZ_{\delta-1}^r$. 
    Since $i_j = i_{j-r}$, there are no edges in $E(G_{i_j-1})\setminus E(G')$ with an endpoint in $[j-r,\ell]\times\ZZ_{\delta-1}^r$, hence $G_{i_j-1}[j-r,\ell] = G'[j-r,\ell]$. Note that the vertices that can be reached from $[j,\ell]\times\ZZ_{\delta-1}^r$  by paths of length at most $r$ in $G'$ is precisely the vertex set $[j-r,\ell]\times\ZZ_{\delta-1}^r$. But then $H'\cap G_{i_j-1}[j-r,\ell]$ contains the connected component of $H'$ containing $w$ and so there is a copy of a connected component of $H-e$ contained in $G'$.
    This contradicts Claim \ref{clm:min2max3_free}.

    Recall that the order of $G$ is $n'=r+\ell\cdot (\delta-1)^r$. Therefore
        \[
        M_H(n') \;\geq\; \tau_H(G) \;\geq\; i_\ell \;\geq\; i_{r\cdot\floor{\ell/r}} \;\geq\; \sum_{s=1}^{\floor{\ell/r}-1} \left(i_{(s+1)r}-i_{sr}\right) \;\geq\; \floor*{\frac \ell r} - 1.
        \]
    Note that during  the $H$-process on any graph no isolated vertex receives a new neighbour since $\delta \geq 2$.
    Therefore, $M_H(n)$ is non-decreasing in $n$ and
        \[
        M_H(n) \geq M_H(n') \geq \floor*{\frac \ell r} - 1 = \Omega(n).
        \]
    The `moreover' of Theorem \ref{thm:23bound} states that if $H$ is bipartite we may obtain the linear lower bound by choosing a bipartite starting graph. Observe that the graph $G'$ is already bipartite. 
    In order to replace $G$ by a bipartite starting graph it will be sufficient to replace the $r$-clique $\{ v_1,\ldots,v_r \}$ by a complete bipartite graph with partite sets of size $r$ each.
    The following is the bipartite analogue of Lemma \ref{lem:growing_clique}.

        \begin{clm}\label{clm:growing_complete_bipartite_graph}
        Let $G$ be a bipartite graph with partite sets $X,Y$. If $v_1,\ldots,v_n$ is an ordering of $V(G)$ such that $\{ v_1,\ldots,v_r \}\subseteq X$, $\{ v_{r+1},\ldots,v_{2r} \}\subseteq Y$ and for every $i > 2r$, either
            \[
            |\{ j \in [i-1] : v_j\in Y, v_iv_j\in E(G) \}|\geq \delta-1 \qquad\text{ or }\qquad |\{ j \in [i-1] : v_j\in X, v_jv_i\in E(G) \}|\geq \delta-1    
            \]
        then at time $i$, $\{ v_1,\ldots,v_{2r+i} \}$ is the vertex set of a (not necessarily induced) complete bipartite graph with partite sets $X\cap\{ v_1,\ldots,v_{2r+i} \}$ and $Y\cap\{ v_1,\ldots,v_{2r+i} \}$.
        \end{clm}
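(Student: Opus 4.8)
The plan is to prove Claim~\ref{clm:growing_complete_bipartite_graph} by induction on $i \geq 0$, mirroring the proof of Lemma~\ref{lem:growing_clique}. The base case $i=0$ is immediate: by hypothesis $\{v_1,\dots,v_r\}\subseteq X$ and $\{v_{r+1},\dots,v_{2r}\}\subseteq Y$, and since $G$ is bipartite with these as the two sides, the claim at time $0$ only asserts that $G_0=G$ contains a complete bipartite graph on $\{v_1,\dots,v_{2r}\}$ --- so I should add this to the hypotheses of the claim (as in Lemma~\ref{lem:growing_clique}, where $\{v_1,\dots,v_r\}$ is assumed to be a clique). The natural statement is: if in addition $G$ contains a complete bipartite graph between $\{v_1,\dots,v_r\}$ and $\{v_{r+1},\dots,v_{2r}\}$, then the conclusion holds.

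For the inductive step, suppose that at time $i-1$ the vertices $\{v_1,\dots,v_{2r+i-1}\}$ span a complete bipartite graph, say with sides $A = X\cap\{v_1,\dots,v_{2r+i-1}\}$ and $B = Y\cap\{v_1,\dots,v_{2r+i-1}\}$, both of size at least $r$ (since the first $2r$ vertices are split evenly). Consider the vertex $v_{2r+i}$; without loss of generality it lies in $X$, and by hypothesis it has at least $\delta-1$ neighbours among $\{v_j : j<2r+i, v_j\in Y\}\subseteq B$. I want to show that every missing edge between $v_{2r+i}$ and $B$ is added at time $i$. Fix $w\in B$ with $v_{2r+i}w\notin E(G_{i-1})$. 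Since $|B|\geq r = v(H)-1$ and $|A|\geq r$, I can greedily embed a copy of $H - e$ for a suitably chosen edge $e$ incident to a minimum-degree vertex of $H$: place one endpoint of $e$ at $v_{2r+i}$, its $\delta-1$ already-present neighbours among $B$, the other endpoint of $e$ at $w$, and all remaining vertices of $H$ inside the complete bipartite graph on $A\cup B$ respecting the bipartition of $H$ (here I use that $H$ is bipartite, which is the setting of the `moreover' statement). Because $A\cup B$ induces a complete bipartite graph in $G_{i-1}$, any bipartition-respecting placement of $H$'s vertices works, and there is enough room since $|A|,|B|\geq v(H)-1$. The edge $v_{2r+i}w$ then completes this copy of $H$, so it appears in $G_i$.

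The main obstacle, and the step requiring the most care, is verifying that $H - e$ really does embed as described --- specifically, choosing $e$ to be incident to a vertex of degree exactly $\delta=\delta(H)$, so that $v_{2r+i}$'s mandated $\delta-1$ old neighbours suffice, and checking that the two sides of $H-e$ can be mapped into $A$ and $B$ with the right vertex on each side. One must be slightly careful that the vertex playing the role of $v_{2r+i}$ sits on one side and $w$ on the other side according to the bipartition of $H$, and that $e$ connects these two sides (it does, being an edge of $H$). Since $|A|,|B| \geq v(H)-1$ there is always space to complete the embedding greedily into the complete bipartite graph, and the independence constraints within each side of $H$ are automatically satisfied because $A$ and $B$ are independent is \emph{not} needed --- we only use that all edges \emph{between} $A$ and $B$ are present. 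Finally, as in Lemma~\ref{lem:growing_clique}, iterating gives that $G_{n-2r}$ is a complete bipartite graph on all of $V(G)$; combined with the bound $i_j > i_{j-r}$ argument (which goes through verbatim since $G'$ is already bipartite), this yields a bipartite starting graph with $\tau_H(G)=\Omega(n)$, completing the `moreover' part of Theorem~\ref{thm:23bound}.
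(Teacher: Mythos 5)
Your proof is correct and takes essentially the same approach as the paper: the paper's one-sentence proof invokes exactly the observation you establish in detail, namely that any vertex with at least $\delta-1$ neighbours in one side $S$ of a complete bipartite graph having both parts of size at least $r$ becomes adjacent to all of $S$ after a single step (using that $H$ is bipartite to embed $H-e$ with the two sides of $H$ going into the two sides plus the new vertex). You also rightly observe that the claim as written implicitly assumes the first $2r$ vertices span a complete bipartite graph at time $0$, which is indeed the case in the application but is omitted from the hypotheses.
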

	
    Claim \ref{clm:growing_complete_bipartite_graph} follows from an inductive application of the observation that in the $H$-process on $G$, any vertex with at least $\delta-1$ neighbours in a partite set $S$ of a complete bipartite graph with at least $r$ vertices in each part will be adjacent to all remaining vertices in $S$ after one more step.

    Define the bipartite starting graph $G$ via
        \begin{linenomath}\begin{align*}
        V(G) &= \{ v_1,\ldots,v_{2r} \} \ \cup \  V(G') \;\text{ and }\\
        E(G) &= \left\lbrace v_iv_j : i\in[r], j\in[r+1,2r] \right\rbrace\cup E(G') \cup \left\lbrace \{ v_j, (1,x) \} : j\in[r+1,2r], x\in\ZZ_{\delta-1}^r \right\rbrace,
        \end{align*} \end{linenomath} 
    with $G'$ as defined earlier in the proof. 
    Again, choose an ordering of $V(G)$ such that $v_1,\ldots,v_{2r}$ are the first $2r$ vertices and the vertices in $\{ j \}\times\ZZ_{\delta-1}^r$ precede the vertices in $\{ j+1 \}\times\ZZ_{\delta-1}^r$. 
    Then the hypotheses of Claim \ref{clm:growing_complete_bipartite_graph} are satisfied with
        \[
        X = \{ v_1,\ldots,v_r \} \cup \bigcup_{j \in [\ell] \text{ odd}} \{ j \}\times\ZZ_{\delta-1}^r
        ,\qquad\qquad 
        Y = \{ v_{r+1},\ldots,v_{2r} \} \cup \bigcup_{j \in [\ell] \text{ even}} \{ j \}\times\ZZ_{\delta-1}^r.
        \]
    The rest of the proof does not differ from the proof of the first part of Theorem \ref{thm:23bound} after Lemma \ref{lem:growing_clique} was invoked.
    \end{proof}

\section{Complete bipartite graphs with a part of size 2}\label{sec:k2s_upper}
In this section we prove Proposition \ref{prop:k2s_upper}, establishing the asymptotic maximum running time of the $K_{2,s}$-process.  

    \begin{proof}[Proof of Proposition \ref{prop:k2s_upper}]
    Fix $s\geq 3$ and note that Theorem \ref{thm:23bound} implies that $M_{K_{2,s}}(n)=\Omega(n)$ as $H=K_{2,s}$ has minimum degree 2 and maximum degree $s\geq 3$. Therefore it remains to prove the upper bound and so we fix $G$ to be an arbitrary $n$-vertex graph and let $(G_i)_{i\geq 0}$ be the $K_{2,s}$-process on $G$. We will show that $\tau_H(G)=O(n)$, and so $M_H(n)=O(n)$ as $G$ is arbitrary. We begin with some simple claims. Here and throughout this proof, for a vertex $v\in V(G)$  and $0\leq i \in \NN$, we define the shorthand $N_i(v):=N_{G_i}(v)$ to denote the neighbourhood of vertex $v$ at time $i$. 

        \begin{clm}\label{clm:k2s_nbr}
        For every $0\leq i\in \mathbb{N}$, if two vertices $x,y\in V(G)$ have at least $s-1$ common neighbours in $G_i$, then $N_i(x)\setminus\{y\} \subseteq N_{i+1}(y)$.
        \end{clm}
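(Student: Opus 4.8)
The plan is to prove Claim~\ref{clm:k2s_nbr} directly from the definition of the $K_{2,s}$-process, by exhibiting, for each vertex $z \in N_i(x)\setminus\{y\}$ that is not already a neighbour of $y$ in $G_i$, an explicit copy of $K_{2,s}-e$ in $G_i$ that is completed by the edge $yz$. First I would fix $i$ and two vertices $x,y$ with a common neighbourhood $S := N_i(x)\cap N_i(y)$ of size $|S|\ge s-1$, and take any $z \in N_i(x)\setminus\{y\}$ with $yz \notin E(G_i)$; the goal is then $yz \in E(G_{i+1})$.

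The key observation is that $\{x,y\}$ on one side and any $s$-subset of their common neighbourhood on the other side almost form a $K_{2,s}$: we just need to produce the $s$-th vertex on the degree-$2$ side adjacent to both $x$ and $y$. Here is where $z$ comes in. Consider the vertex pair $\{x,z\}$ as the part of size two, and observe that $z$ is adjacent to $x$ (by assumption $z \in N_i(x)$) but we want $z$ adjacent to everything in a chosen $s$-set. We instead take the two ``centres'' to be $\{x, y\}$ (size $2$) and pick a set $T \subseteq S$ of size exactly $s-1$; together with $z$ these would form the size-$s$ part, and all edges from $\{x,y\}$ to $T$ are present, and the edge $xz$ is present, so the only missing edge among the $2s$ potential edges of this $K_{2,s}$ is $yz$. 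One must only double-check that $z \notin T$ (which holds after possibly removing $z$ from $S$ and using $|S\setminus\{z\}|\ge s-1$ when $s-1 \ge s-1$, so we need $|S|\ge s-1$ and if $z\in S$ then $|S\setminus\{z\}|\ge s-2$; so more carefully: if $z \in S$ we want $|S| \ge s$, but we are only given $|S|\ge s-1$).

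This last point is the only genuine subtlety, so let me address it in the plan. If $z \notin S$, then $T$ can be any $(s-1)$-subset of $S$ and $\{x,y\}\cup (T\cup\{z\})$ witnesses a copy of $K_{2,s}-yz$, giving $yz\in E(G_{i+1})$. If $z \in S$, then $z$ is already a common neighbour of $x$ and $y$, hence $yz \in E(G_i)$ already, contradicting our choice of $z$; so in fact this case does not arise. Thus the dichotomy is clean: any $z \in N_i(x)\setminus\{y\}$ either already lies in $N_i(y)$, or lies in $N_i(x)\setminus(N_i(y)\cup\{y\})$ in which case the $K_{2,s}-e$ argument applies and $z \in N_{i+1}(y)$. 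Either way $z \in N_{i+1}(y)$, which is exactly $N_i(x)\setminus\{y\}\subseteq N_{i+1}(y)$. I do not expect a serious obstacle here; the only thing to be careful about is the bookkeeping of which vertex plays which role in $K_{2,s}$ (the part of size $2$ versus the part of size $s$) and ensuring the $s-1$ common neighbours plus the one new vertex $z$ are genuinely $s$ distinct vertices forming the larger part.
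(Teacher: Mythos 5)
Your proof is correct and takes essentially the same approach as the paper: fix $s-1$ common neighbours and a $z\in N_i(x)$ not yet adjacent to $y$, and observe that $\{x,y\}$ together with those $s-1$ common neighbours and $z$ form a copy of $K_{2,s}$ missing only the edge $yz$. The extra care you take to verify $z\notin S$ (since $z\notin N_i(y)$) is exactly the distinctness check the paper handles implicitly by choosing $v\in N_i(x)\setminus N_i(y)$.
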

        \begin{proof}
        As $N_i(y)$ is non-decreasing in $i$ it suffices to show that $N_i(x)\setminus (N_i(y)\cup\{y\}) \subseteq N_{i+1}(y)$.
        Fix $s-1$ common neighbours $v_1,\ldots,v_{s-1}$ of $x$ and $y$ at time $i$, and let $v\in N_i(x)\setminus (N_i(y)\cup\{y\})$. Then $x,y$ and $v_1,\ldots,v_{s-1},v$ form a copy of $K_{2,s}-e$ (with $e$ any edge of $K_{2,s}$) in $G_i$ that can be extended to a copy of $K_{2,s}$ by adding the edge $yv$.
        \end{proof}

    Our next claim shows that many edges can be added quickly   in the $K_{2,s}$-process. 

        \begin{clm}\label{clm:k2s_blocks1}
        Let $0\leq i\in \NN$ and $A,B\subseteq V(G)$ be a pair of disjoint sets such that any two vertices from the same set have at least $s-1$ common neighbours at time $i$. If $E_{G_i}(A,B)\neq\emptyset$, then $E_{G_{i+2}}(A,B) = \{ xy : x \in A, y\in B \}$. Similarly, if $C\subseteq V(G)$ is a single set such that every pair of vertices have at least $s-1$ common neighbours at time $i$ and $G_i[C]$ is non-empty, then the vertices of $C$ host a clique at time $i+2$. 
        \end{clm}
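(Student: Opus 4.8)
The plan is to prove Claim~\ref{clm:k2s_blocks1} in two parts, handling the bipartite case $E_{G_i}(A,B)\neq\emptyset$ first and then the single-set case by an essentially identical argument. The key engine is Claim~\ref{clm:k2s_nbr}: whenever two vertices $x,y$ have $s-1$ common neighbours at some time $j$, then at time $j+1$ the vertex $y$ inherits (almost) the whole neighbourhood of $x$, and symmetrically $x$ inherits that of $y$.

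For the bipartite statement, first I would fix an edge $x_0y_0\in E_{G_i}(A,B)$ with $x_0\in A$, $y_0\in B$. The idea is that $x_0$ acts as a ``hub'' for $A$ and $y_0$ as a hub for $B$. Concretely: take any $x\in A$. Since $x$ and $x_0$ have at least $s-1$ common neighbours at time $i$ (by hypothesis on $A$), Claim~\ref{clm:k2s_nbr} gives $N_i(x_0)\setminus\{x\}\subseteq N_{i+1}(x)$; in particular $y_0\in N_i(x_0)$ forces $xy_0\in E(G_{i+1})$ (unless $x=x_0$, in which case it already holds at time $i$). So at time $i+1$ every vertex of $A$ is adjacent to $y_0$. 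Symmetrically, applying Claim~\ref{clm:k2s_nbr} to $y\in B$ and $y_0$, and noting that $x_0y\in E_{G_i}(A,B)$ need not hold — here is where I must be slightly careful: I should instead observe that at time $i+1$, $y_0$ is adjacent to all of $A$, so now $y_0$ and any $y\in B$ still have their original $\geq s-1$ common neighbours from the $B$-hypothesis at time $i\le i+1$, and Claim~\ref{clm:k2s_nbr} applied at time $i+1$ yields $N_{i+1}(y_0)\setminus\{y\}\subseteq N_{i+2}(y)$. Since $A\subseteq N_{i+1}(y_0)$ (and $A\cap B=\emptyset$ so no $y\in B$ is being excluded), we get $A\subseteq N_{i+2}(y)$ for every $y\in B$, i.e.\ $E_{G_{i+2}}(A,B)$ is complete. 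Note the monotonicity of neighbourhoods (and Observation~\ref{obs:hom}) is used implicitly to carry the common-neighbour hypotheses forward from time $i$ to time $i+1$.

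For the single-set statement with $C$, the same two-round scheme works: fix an edge $x_0y_0\in G_i[C]$. In round one, every $x\in C$ with $x\neq x_0,y_0$ shares $\geq s-1$ common neighbours with $x_0$ at time $i$, so becomes adjacent to $y_0$ at time $i+1$; thus at time $i+1$, $y_0$ is adjacent to all of $C\setminus\{y_0\}$. In round two, for any two distinct $u,v\in C\setminus\{y_0\}$, the vertex $y_0$ is a common neighbour of both at time $i+1$, and $u,v$ also retain their $\geq s-1$ common neighbours from the time-$i$ hypothesis; applying Claim~\ref{clm:k2s_nbr} at time $i+1$ with $x=y_0$... actually more directly: $u$ and $y_0$ have $\geq s-1$ common neighbours at time $i+1$ (namely the $s-1$ common neighbours of $u$ and $x_0$ from time $i$, which are adjacent to $y_0$ at time $i+1$ by the round-one argument, provided we pick them avoiding $y_0$ — possible since $s-1\ge 2$ and we may assume enough room, or simply note $y_0$'s own neighbourhood now contains all of $C$), so $N_{i+1}(y_0)\setminus\{u\}\subseteq N_{i+2}(u)$, hence $v\in N_{i+2}(u)$. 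Therefore $C$ is a clique at time $i+2$.

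The main obstacle I anticipate is bookkeeping the common-neighbour hypotheses across the two rounds — specifically ensuring that when I invoke Claim~\ref{clm:k2s_nbr} at time $i+1$, the required $s-1$ common neighbours genuinely exist at time $i+1$ and that the excluded vertex in ``$N_j(x)\setminus\{y\}$'' does not destroy the conclusion (this is why disjointness of $A$ and $B$, and treating $y_0$ separately in the $C$ case, matter). A clean way to sidestep subtleties is: after round one, $y_0$ is adjacent to the entire relevant set, so $y_0$ together with the $\geq s-1$ common neighbours guaranteed at time $i$ gives each remaining vertex a vertex with a large, known neighbourhood to copy from; one application of Claim~\ref{clm:k2s_nbr} then finishes. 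I would also remark that the statement is tight in the sense that two rounds are genuinely needed, but that is not required for the proof.
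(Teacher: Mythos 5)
Your proof is correct and follows essentially the same two-round argument as the paper, invoking Claim~\ref{clm:k2s_nbr} once per round with $x_0,y_0$ as hub vertices. One small remark: in the $C$ case you overcomplicate the justification that $u$ and $y_0$ have $\geq s-1$ common neighbours at time $i+1$ (your proposed witnesses, common neighbours of $u$ and $x_0$, need not lie in $C$, so the round-one argument does not make them adjacent to $y_0$); this fact is simply immediate from the hypothesis on $C$ at time $i$ together with monotonicity, as you use for the $A,B$ case.
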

        \begin{proof}
        Let $x_0\in A$, $y_0\in B$ with $x_0y_0\in E(G_i)$. Any $x\in A$ with $x\neq x_0$ shares at least $s-1$ neighbours with $x_0$. Hence $xy_0\in E(G_{i+1})$ by Claim \ref{clm:k2s_nbr}. Similarly $x_0y\in E(G_{i+1})$ for every $y\in B \setminus\{y_0\}$. Applying Claim \ref{clm:k2s_nbr} again we obtain $N_{i+2}(x) \supseteq N_{i+1}(x_0)\setminus\{x\} \supseteq B$ for each $x\in A$ as well as $N_{i+2}(y) \supseteq N_{i+1}(y_0)\setminus\{y\} \supseteq A$ for every $y\in B$.
        The proof of the second statement is almost identical, we leave the details to the reader.
        \end{proof}

    Given a \emph{partition} $\cP$ of the vertex set $V(G)$ into $\cP=\{A_1,\ldots, A_k\}$, we refer to the parts $A_i$ as \emph{blocks}. The idea of our proof is to have an evolving partition throughout the process $(G_i)_{i\geq 0}$, maintaining that at each step, each block of the partition has the property that any pair of vertices in the block share at least $s-1$ common neighbours. Our final claim shows that if the process does not terminate, then within 4 steps, we can define a coarser partition with the same property. 

        \begin{clm}\label{clm:k2s_blocks2}
        Suppose $0\leq i\leq \tau_{K_{2,s}}(G)-4$ and $\cP$ is a partition of $V(G)$ such that any two vertices from the same block have at least $s-1$ common neighbours at time $i$.  Then there exist $A,B\in\cP$ for which $\cP\setminus\{ A,B \}\cup\{ A\cup B \}$ is a partition of $V(G)$ such that any two vertices from the same block have at least $s-1$ common neighbours at time $i+4$.
        \end{clm}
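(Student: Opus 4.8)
The plan is to use that the process has not stabilised at time $i$ to locate a freshly added edge, and to read two mergeable blocks off the copy of $K_{2,s}$ minus an edge that produced it.

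\textbf{Setup.} Since $i < \tau_{K_{2,s}}(G)$, some edge $uv$ is added at time $i+1$, so $G_i$ contains a copy of $K_{2,s}$ minus an edge whose completion is $uv$. Write $x_1,x_2$ for its two vertices on the size-$2$ side and $y_1,\dots,y_s$ for the $s$ vertices on the other side, indexed so that the missing edge is $uv=x_1y_s$. Then at time $i$ we have $x_1\sim y_1,\dots,y_{s-1}$ and $x_2\sim y_1,\dots,y_s$; hence $x_1$ and $x_2$ share the $s-1$ common neighbours $y_1,\dots,y_{s-1}$ at time $i$, and $x_1$ is adjacent to all of $S:=\{y_1,\dots,y_s\}$ from time $i+1$ on. Note $S\cap\{x_1,x_2\}=\emptyset$. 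Also, if $\cP$ were a single block the statement is vacuous: a single block with the stated property together with a non-edge (which exists since $G_i\neq K_n$) percolates to $K_n$ within two steps by the second part of Claim \ref{clm:k2s_blocks1}, contradicting $i\leq\tau_{K_{2,s}}(G)-4$.

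\textbf{Main case: the hubs $x_1,x_2$ lie in distinct blocks $A\ni x_1$, $B\ni x_2$.} We merge $A$ and $B$. Pairs inside $A$ (and inside $B$) keep their $s-1$ common neighbours for all later times, so only a cross pair $(a,b)$ with $a\in A$, $b\in B$ needs attention. From $S\subseteq N_{i+1}(x_1)$ and $S\subseteq N_i(x_2)$, two applications of Claim \ref{clm:k2s_nbr} — to $(a,x_1)$ inside $A$ at time $i+1$, and to $(b,x_2)$ inside $B$ at time $i$, both legitimate since these pairs retain $\geq s-1$ common neighbours — give $N_{i+2}(a)\supseteq S\setminus\{a\}$ and $N_{i+1}(b)\supseteq S\setminus\{b\}$. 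So by time $i+2$ every vertex of $A\cup B$ is adjacent to at least $s-1$ vertices of $S$, and to all $s$ of them unless it itself lies in $S$. Therefore $a$ and $b$ have at least $|S\setminus\{a,b\}|\geq s-2$ common neighbours in $S$; and if both $a,b\in S$ then $x_1\notin S$, being adjacent to all of $S$ from time $i+1$, provides an $(s-1)$-st one. Thus the strictly coarser partition $\cP\setminus\{A,B\}\cup\{A\cup B\}$ has the required property already at time $i+2\leq i+4$.

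\textbf{Degenerate case: $x_1$ and $x_2$ lie in one block $A$ (this does not occur for the all-singleton partition, hence only once merges have happened).} Here we instead invoke Claim \ref{clm:k2s_blocks1}. First, applying Claim \ref{clm:k2s_nbr} inside $A$ to pairs $(a,x_2)$ shows that every vertex of $A$ is adjacent to $\geq s-1$ elements of $S$ by time $i+1$. Now track the blocks of $y_1,\dots,y_s$. If some $y_j$ lies in a block $C\neq A$, then $x_2y_j\in E_{G_i}(A,C)$ makes $A\times C$ complete bipartite at time $i+2$; propagating the $S$-adjacency of $A$ onto $C$ through this complete bipartite graph (using one more step) one checks that $A\cup C$ has the property by time $i+4$. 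If instead $y_1,\dots,y_s\in A$ then $x_1y_1\in E(G_i[A])$, so by the second part of Claim \ref{clm:k2s_blocks1} the block $A$ is a clique on $\geq s+2$ vertices at time $i+2$; such a clique absorbs any adjacent outside vertex in one further step, and since $i+2<\tau_{K_{2,s}}(G)$ the process is still active, so either a subsequent edge exposes a block $C$ with $E_{G_{i+2}}(A,C)\neq\emptyset$ — whence $A\times C$ is complete bipartite by time $i+4$ and the clique $A$ alone supplies $\geq s+1$ common neighbours to every cross pair of $A\cup C$ — or $A$ is already a stable clique component at time $i+2$, in which case it (being its own $\cP$-block) interacts with nothing, and we may restrict the entire argument to $V(G)\setminus A$, where the partition still has the property. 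I expect this degenerate case to be the main obstacle, and the reason the bound allows $4$ steps rather than $2$: one must carefully handle the coincidences between the vertices $x_1,x_2,y_1,\dots,y_s$ of the witnessing copy and the vertices of the blocks being merged, and squeeze a second application of the two-step Claim \ref{clm:k2s_blocks1} into the budget precisely when a block swells into a clique before an edge leaving it becomes available.
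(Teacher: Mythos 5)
Your main case, where the two hubs $x_1,x_2$ lie in distinct blocks $A,B$, is correct and in fact slightly sharper than the paper's, finishing by time $i+2$. The trouble is the degenerate case $x_1,x_2\in A$, and you flag the weakness yourself (``one checks'', ``I expect''); this is where the argument genuinely breaks. In the subcase ``some $y_j$ lies in a block $C\neq A$'', after $A\times C$ becomes complete bipartite at time $i+2$ you know $N_{i+2}(a)\supseteq C\cup(S\setminus\{a\})$ for $a\in A$ and $N_{i+2}(c)\supseteq A$ for $c\in C$, so the common neighbours you actually control are only those in $(S\cap A)\setminus\{a\}$ --- and this can be small or empty when the $y_j$ are scattered over several blocks other than $A$. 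To ``propagate $S$-adjacency onto $C$'' via Claim~\ref{clm:k2s_nbr} applied to, say, $(c,x_1)$, you would already need $c$ and $x_1$ to share $s-1$ common neighbours at time $i+2$, which hits the same $S\cap A$ bottleneck; the budget of $4$ steps does not obviously close the gap. The subcase ``all $y_j\in A$ and $A$ is a stable clique component at time $i+2$'' also does not close: ``restrict the entire argument to $V(G)\setminus A$'' would mean re-running the claim starting from time $i+2$, with no guarantee that the new witnessing copy has its hubs in distinct blocks and no remaining time budget to absorb a further degenerate split.

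The paper's proof is built precisely to make the degenerate case impossible, and this is the missing idea. Rather than taking the witnessing copy of $K_{2,s}$ for the edge added at time $i+1$, it takes an edge $e$ added at time $i+3$, lets $A',B'$ be the (possibly equal) blocks of its endpoints, and observes $E_{G_i}(A',B')=\emptyset$, since otherwise $e$ would already be present at time $i+2$ by Claim~\ref{clm:k2s_blocks1}. Setting $i^*\in\{i+1,i+2,i+3\}$ to be the first time an edge appears between $A'$ and $B'$, and $K$ a copy of $K_{2,s}$ completed at time $i^*$ by such an edge, the two hubs $x_0,y_0$ of $K$ are \emph{forced} into distinct blocks: if they shared a block, the $K$-edge from the other hub to the $s$-side endpoint of the newly completed edge would already have joined $A'$ to $B'$ at time $i^*-1$, contradicting minimality of $i^*$. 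With $A,B$ the (now certainly distinct) hub blocks, the common-neighbour count $\bigl|V(K)\setminus\{x_0,y_0,x,y\}\cup(N_{i^*}(x)\cap N_{i^*}(y)\cap\{x_0,y_0\})\bigr|\geq s-1$ closes the argument by time $i^*+1\leq i+4$, exactly as in your main case. So: keep your main case, but replace the choice of the witnessing edge (time $i+1$) by the paper's look-ahead to time $i+3$ and trace-back to $i^*$, which eliminates the case split entirely.
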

        \begin{proof}
        Let $e\in E(G_{i+3})\setminus E(G_{i+2})$ be added at time $i+3$ (which is possible as $i\leq \tau_{K_{2,s}}(G)-4$) and let $A',B'$ be the (not necessarily distinct) blocks of $\cP$ containing the endpoints of $e$. Then $E_{G_i}(A',B')=\emptyset$ for otherwise $e\in E(G_{i+2})$ by Claim \ref{clm:k2s_blocks1}. This tells us that
            \[ 
            i^* := \min \{ i' \geq 0 : E_{G_{i'}}(A',B') \neq\emptyset \} \in \{ i+1,i+2,i+3 \}. 
            \] 
        Fix a copy $K$ of $K_{2,s}$ that was completed by an edge in $E_{G_{i^*}}(A',B')$ at time $i^*$. Let $x_0,y_0$ be the two vertices forming its partite set of size two. These two vertices cannot lie in the same block since there were no edges from $A'$ to $B'$ at time $i^*-1$. Denote the block containing $x_0$ by $A$ and the block containing $y_0$ by $B$.
        For every $x\in A$, $y\in B$ we have by Claim \ref{clm:k2s_nbr}, that 
            \begin{equation}\label{eq:k2s_blocks} 
            N_{i+4}(x) \cap N_{i+4}(y) \supseteq V(K)\setminus\{x_0,y_0,x,y\}.
            \end{equation}
        The set on the right hand side of \eqref{eq:k2s_blocks} has size at least $s-1$ if $\{x,y\}\not\subseteq V(K)\setminus\{x_0,y_0\}$, and size $s-2$ if both $x$ and $y$ lie in $V(K)\setminus\{x_0,y_0\}$.
        In the second case $x_0$ and $y_0$ lie in $N_{i+4}(x) \cap N_{i+4}(y)$, which tells us that $\left| N_{i+4}(x) \cap N_{i+4}(y) \right| \geq s$.
        Hence the partition $\cP\setminus\{ A,B \}\cup\{ A\cup B \}$ has the desired property.
        \end{proof}

    Set $\cP_0 := \{ \{ v \} : v\in V(G) \}$. This partition trivially fulfils the condition that any two vertices from the same block have at least $s-1$ common neighbours at time $0$. We may inductively apply Claim \ref{clm:k2s_blocks2} to obtain a sequence $\cP_0,\cP_4,\ldots,\cP_\ell$ of partitions of $V(G)$ such that $\ell\in 4\cdot\NN_0$,  
        \begin{linenomath}\begin{equation*} 
        \ell \geq \tau_{K_{2,s}}(G) - 3
        \end{equation*}\end{linenomath}
    and for all $i\in\{ 4,8,\ldots,\ell \}$, $\cP_i$ is a partition of $V(G)$ that is such that any two vertices from the same block have at least $s-1$ common neighbours at time $i$. Moreover $\cP_i$ has fewer blocks than $\cP_{i-4}$ by construction and so there can be at most $n$ distinct partitions in this process. This implies
        \[ 
        n \geq \frac \ell 4 \geq \frac{\tau_{K_{2,s}}(G) - 3} 4,
        \]
    or equivalently, $\tau_{K_{2,s}}(G)  \leq 4n+3$, as required. 
    \end{proof}

\section{Constructions via simulation}\label{sec:counterexample}
In this section we use \emph{simulations} to give constructions of graph processes. The following definition captures the key idea of  our method. 

    \begin{dfn} \label{def:simulation}
    Suppose $(\tilde G_i)_{i\geq 0}$ is some $\tilde H$-process on a starting graph $\tilde G$. We say a $H$-process $(G_i)_{i\geq 0}$ on a starting graph $G$ \emph{simulates} the  $\tilde H$-process $(\tilde G)_{i\geq 0}$ if 
        \begin{enumerate}
        \item \label{item:sim 1} $\tilde G\subseteq G$ as an induced subgraph, and 
        \item \label{item:sim 2} for $i\geq 0$, we have\footnote{Here and throughout this section we consider each $\tilde G_i$ with $i\geq 0$ as a graph on the vertex set $V(\tilde G)\subseteq V(G)$.} $E(G_i)\setminus E(G)=E(\tilde G_i)\setminus E(\tilde G)$. 
        \end{enumerate}
    \end{dfn}

Note that the second condition in Definition \ref{def:simulation} gives that for $i\geq0$, we have that $G_i[V(\tilde G)]=\tilde G_i$, that is, if we focus solely on the vertices of $\tilde G$ in the $H$-process $(G_i)_{i\geq 0}$, what we see is the $\tilde H$-process $(\tilde G)_{i\geq 0}$. In fact \eqref{item:sim 2} implies that these edges on $V(\tilde G)$ are the \emph{only} edges added throughout the process $(G_i)_{i\geq 0}$.  In particular, this implies the following simple observation. 

    \begin{obs} \label{obs:simulate}
    If  the $H$-process $(G_i)_{i\geq 0}$ on $G$ \emph{simulates} the  $\tilde H$-process $(\tilde G)_{i\geq 0}$ on $\tilde G$, then $\tau_H(G)= \tau_{\tilde H}(\tilde G)$.
    \end{obs}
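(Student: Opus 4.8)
The plan is to prove Observation~\ref{obs:simulate} by a short induction showing that the two processes stabilise in lockstep, using condition~\eqref{item:sim 2} of Definition~\ref{def:simulation} to translate ``no new edge appears in the $H$-process on $G$ at step $i$'' into ``no new edge appears in the $\tilde H$-process on $\tilde G$ at step $i$'' and vice versa. Concretely, set $\tau := \tau_H(G)$ and $\tilde\tau := \tau_{\tilde H}(\tilde G)$. By \eqref{item:sim 2}, for every $i\ge 0$ the set of edges added up to time $i$ in the $G$-process, namely $E(G_i)\setminus E(G)$, equals $E(\tilde G_i)\setminus E(\tilde G)$, the set of edges added up to time $i$ in the $\tilde G$-process. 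Hence $E(G_{i+1})\setminus E(G_i) = \bigl(E(\tilde G_{i+1})\setminus E(\tilde G)\bigr)\setminus\bigl(E(\tilde G_i)\setminus E(\tilde G)\bigr) = E(\tilde G_{i+1})\setminus E(\tilde G_i)$ for every $i\ge 0$. Therefore $G_{i+1}=G_i$ if and only if $\tilde G_{i+1}=\tilde G_i$.

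First I would record that both processes are monotone (edge sets only grow) and eventually constant, so $\tau$ and $\tilde\tau$ are well defined; this is already part of the setup in the introduction. Then, from the displayed equality $E(G_{i+1})\setminus E(G_i) = E(\tilde G_{i+1})\setminus E(\tilde G_i)$, one sees that $\{i : G_{i+1}\neq G_i\} = \{i : \tilde G_{i+1}\neq\tilde G_i\}$ as subsets of $\NN_0$. Since $\tau_H(G)=\min\{t : G_t=G_{t+1}\}$ and likewise for $\tilde G$, and since both processes are monotone so that $G_t=G_{t+1}$ forces $G_s=G_{s+1}$ for all $s\ge t$, the minimum over these two identical sets of ``still-active'' times agrees, giving $\tau_H(G)=\tau_{\tilde H}(\tilde G)$.

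The one point requiring a line of justification is the first equality in the chain above, i.e.\ that $E(G_{i+1})\setminus E(G_i)$ really is obtained from the $i{+}1$ and $i$ ``added-edge'' sets by set difference. This is immediate because $E(G)\subseteq E(G_i)\subseteq E(G_{i+1})$ (monotonicity plus $G_0=G$), so $E(G_{i+1})\setminus E(G_i) = \bigl(E(G_{i+1})\setminus E(G)\bigr)\setminus\bigl(E(G_i)\setminus E(G)\bigr)$; now substitute \eqref{item:sim 2} for both terms and reverse the identity on the $\tilde G$ side. I do not expect a genuine obstacle here: condition~\eqref{item:sim 1} (that $\tilde G$ sits inside $G$ as an induced subgraph) plays no role in this particular observation beyond making Definition~\ref{def:simulation} sensible, and the content is entirely bookkeeping about when two nested, monotone edge-filtrations first become stationary. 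The main care is simply to phrase ``stabilises at the same time'' correctly given that $\tau$ is defined as a minimum rather than as, say, the first index where the process has been constant for one step — but these coincide for monotone processes, which I would note explicitly.
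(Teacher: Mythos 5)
Your proof is correct, and since the paper states this as a ``simple observation'' without supplying a proof, your argument is simply the natural bookkeeping that the authors have in mind: using monotonicity to pass from the cumulative identity $E(G_i)\setminus E(G)=E(\tilde G_i)\setminus E(\tilde G)$ to the per-step identity $E(G_{i+1})\setminus E(G_i)=E(\tilde G_{i+1})\setminus E(\tilde G_i)$, hence equality of stabilisation times. Your remark that condition~\eqref{item:sim 1} of Definition~\ref{def:simulation} is not actually used here is also accurate.
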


One way of  generating examples of simulation is by considering disjoint unions of graphs. Indeed, for $H=H'\sqcup \tilde H$ and some $\tilde H$-process $(\tilde G_i)_{i\geq 0}$ on a graph $\tilde G$, we can consider the $H$-process $(G_i)_{i\geq 0}$ on $G=H'\sqcup \tilde G$. Under certain conditions one can show that the copy of $H'$ is `locked in' in the $H$-process $(G_i)_{i\geq 0}$, that is, the role of $H'$ in any copy of $H$ is always played by the copy of $H'$ in $G_i$ that is disjoint from the vertices of $\tilde G\subseteq G$. Therefore the only edges added are those corresponding to copies of $\tilde H$ in the $\tilde H$-process on $\tilde G$ and so the $H$-process $(G_i)_{i\geq 0}$ on $G$ indeed simulates the  $\tilde H$-process $(\tilde G)_{i\geq 0}$ on $\tilde G$. One key  condition that is needed for this to work is that  copies of $H'$ minus an edge do not appear in any of the $\tilde G_i$,

Our constructions will build on this idea of having some subgraph $H'\subseteq H$ which is locked in place throughout the process $(G_i)_{i\geq 0}$ and so we need graph processes $(\tilde G_i)_{i\geq 0}$ which never contain some copy of $H'$ minus an edge.  For Theorem \ref{thm:simulate} we will simulate a $K_6$-process $(\tilde G_i)_{i\geq 0}$ and so  the following fact will be useful. 

    \begin{fact} \label{fact:K6noK7}
    For all $n\in \NN$, there exists a $K_6$-process $(\tilde G_i)_{i\geq 0}$ on an $n$-vertex graph $\tilde G$ such that $\tau_{K_6}(\tilde G)\geq \tfrac{n^2}{2000}$ and for all $i\geq0 $, $\tilde G_i$ is $K_7$-free. 
    \end{fact}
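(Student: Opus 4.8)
The goal is to exhibit, for every $n$, a $K_6$-process that takes at least $\Omega(n^2)$ steps and that never creates a $K_7$ along the way. The natural approach is to take the known quadratic-time lower bound construction for cliques from Balogh, Kronenberg, Pokrovskiy and Szabó \cite{balogh2019maximum}, which gives an $n$-vertex starting graph $\tilde G$ with $\tau_{K_6}(\tilde G) = \Omega(n^2)$, and to verify (or arrange) that the associated process is $K_7$-free throughout. The first thing I would do is recall the precise form of their construction: it builds a starting graph on roughly $n$ vertices where the $K_r$-process percolates in a controlled, ``layered'' fashion, and the number of steps is bounded below by a constant times $n^2$. I would then check the explicit constant to confirm the bound $\tau_{K_6}(\tilde G)\geq n^2/2000$ holds (adjusting the starting graph by a bounded blow-up/padding with isolated vertices if needed, which does not decrease the running time since isolated vertices never gain edges in a $K_6$-process).

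The heart of the matter is the $K_7$-freeness. The key structural feature to exploit is that in these chain-type constructions the final graph $\langle \tilde G\rangle_{K_6}$ is \emph{not} complete — the process percolates only within prescribed ``blocks'' or along a prescribed union of cliques of bounded size, and in particular the final graph has bounded clique number. Concretely, I would argue that every $\tilde G_i$ is a subgraph of the final graph $\langle \tilde G\rangle_{K_6}$ (immediate from the definition of the process), so it suffices to show $\omega(\langle \tilde G\rangle_{K_6}) \leq 6$. To see this, one identifies the final graph as a union of copies of $K_6$ glued along small interfaces in a tree-like or grid-like pattern, and observes that any clique must be contained in a single such $K_6$: a $K_7$ would need $7$ mutually adjacent vertices, but adjacency across two different blocks is limited to the shared interface, which has fewer than $7$ vertices, so no $7$-clique can span more than one block, and no single block contains one either. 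If the construction from \cite{balogh2019maximum} is phrased so that the final graph is literally a clique (which happens for $r=6$ in the variant that achieves $\Omega(n^2)$), then instead I would modify it — e.g. use the disjoint-union / block version, or truncate — so that the final graph is a bounded union of $K_6$'s; the running time bound is robust to such modifications.

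The main obstacle I anticipate is precisely this tension: the cleanest quadratic lower bound constructions for $K_6$ may well percolate all the way to $K_n$, which would immediately violate $K_7$-freeness. So the real work is to produce a \emph{variant} of the construction whose closure has clique number exactly $6$ while retaining the quadratic running time — this likely means working with the ``double chain'' or block-structured version, where the process is forced to add edges one layer at a time without ever completing a large clique, and carefully checking that the edges added at each step, together with all previously present edges, never yield $7$ pairwise-adjacent vertices. Once the correct construction is in hand, the $K_7$-freeness should follow from a short argument tracking which pairs of vertices can ever become adjacent (namely, only pairs lying in a common intended $K_6$), so that $\langle \tilde G\rangle_{K_6}$ is a union of $K_6$'s with small pairwise intersections and hence $K_7$-free. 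I would then combine: $\tilde G_i \subseteq \langle \tilde G\rangle_{K_6}$ is $K_7$-free for all $i$, and $\tau_{K_6}(\tilde G)\geq n^2/2000$, which is the statement.
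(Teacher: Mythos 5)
Your plan takes the same overall approach as the paper: start from the quadratic lower-bound construction of Balogh, Kronenberg, Pokrovskiy and Szab\'o, reduce $K_7$-freeness of the whole process to a bound on the clique number of the final graph $\langle\tilde G\rangle_{K_6}$ (valid, since every $\tilde G_i$ is a subgraph of it), and then verify that bound. Your reduction step is correct and is in spirit what the paper does.

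Where the proposal falls short is in the actual verification. You leave the core claim --- that $\omega(\langle\tilde G\rangle_{K_6})\leq 6$ --- as a guess, and the guess does not match the real structure of the construction. The $K_6$-construction in \cite{balogh2019maturum} (their Lemma 11) is not a collection of $K_6$'s glued along small interfaces in a tree- or grid-like pattern; it is built from a $3$-uniform hypergraph $\cH$, and the argument bounding cliques is a partition argument: one shows (their Lemma 8) that every edge ever added lies in the $2$-skeleton of $\cH$, and (their Claim 14) that $V(\cH)$ admits a partition $U\cup U'$ such that every clique of the $2$-skeleton meets each of $U$ and $U'$ in at most $3$ vertices, so the clique number is at most $6$. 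Your proposed ``glued blocks'' picture would not localise a hypothetical $K_7$ inside a single block here, since the $2$-skeleton is dense and adjacencies are not confined to small pairwise intersections. Moreover, the concern you raise --- that the construction might percolate to $K_n$, forcing you to modify it --- does not arise: because the process stays inside the $2$-skeleton of $\cH$, the final graph has bounded clique number automatically, and no truncated or block variant is needed. So the structure of the proof you need is a vertex-partition/skeleton argument read off from \cite{balogh2019maximum}, not a glued-clique decomposition.
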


Fact \ref{fact:K6noK7} follows from the work of  Balogh, Kronenberg, Pokrovskiy and the third author of the current paper~\cite{balogh2019maximum}. Indeed one can check that their construction of a $K_6$-process which establishes \cite{balogh2019maximum}*{Lemma 11} is $K_7$-free throughout.
In more detail, the definition of the process $(\tilde G_i)_{i\geq 0}$ in \cite{balogh2019maximum} is defined  via a hypergraph $\cH$ and it can be checked \cite{balogh2019maximum}*{Lemma 8} that any edge that appears in the process $(\tilde G_i)_{i\geq 0}$ is an edge of the \emph{2-skeleton} (see \cite{balogh2019maximum}*{Definition 6}) of the hypergraph $\cH$. It is then shown in \cite{balogh2019maximum}*{Claim 14} that there is a partition $V(\tilde G)=V(\cH)=U\cup U'$ of the vertex set of $\cH$ such that any clique in the $2$-skeleton of $\cH$ has at most 3 vertices in $U$ and at most 3 vertices in $U'$. This therefore implies that the clique number of $\tilde G_i$ is at most 6 throughout the process. 

    \begin{figure}[ht] \label{fig:H1}
    \centering
    \includegraphics[scale=1]{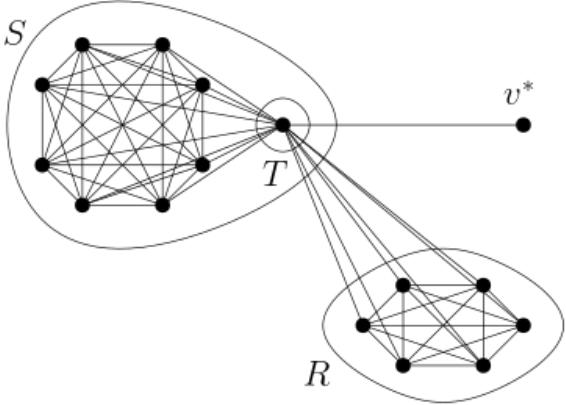}
    \caption{   \label{fig:sim} The graph $H_1$.} 
    \end{figure}

In the proof of Theorem \ref{thm:simulate}, we will use the $K_6$-process $(\tilde G_i)_{i\geq 0}$ given by Fact \ref{fact:K6noK7}. For $1\leq t\in \NN$, we now define the graph $H_t$ with $\delta(H_t)=t$, which we will use to create a $H_t$-process that simulates the $K_6$-process $(\tilde G_i)_{i\geq 0}$. An example when $t=1$ is given in Figure \ref{fig:sim}.  The vertex set is defined by the partition  \[V(H_t):=S\cup R\cup \{v^*\}\] with $|S|=t+8$ and $|R|=6$, and so $|V(H_t)|=t+15$. We also define a vertex subset $T\subseteq S$ with $|T|=t$. The edge set of $H_t$ is then defined as 
\[E(H_t):=\binom{S}{2}\cup \binom{R}{2}\cup \{xy:x\in T, y\in R\} \cup \{xv^*:x\in T\}.\]
In particular note that the vertices $R$ host a $K_6$ and that $\delta(H_t)=\deg _{H_t}(v^*)=t$. 

We are now in a position to prove Theorem \ref{thm:simulate}. 

    \begin{proof}[Proof of Theorem \ref{thm:simulate}]
    For $1\leq t\in \mathbb{N}$, fix $H=H_t$ as defined above and so $\delta(H)=t$ as required. For $n\in \NN$ sufficiently large, fix $\tilde n=n-t-8$ and let $\tilde G$ be the $\tilde n$-vertex graph given by Fact \ref{fact:K6noK7}. We will define a $H$-process $(G_i)_{i\geq 0}$ on a graph $G$ that simulates the $K_6$-process $(\tilde G_i)_{i\geq 0}$ on $\tilde G$. This will complete the proof as by Observation \ref{obs:simulate}, we will have that \[M_H(n)\geq \tau_H(G)=\tau_{K_6}(\tilde G)\geq \frac{\tilde n^2}{2000}=\Omega(n^2),\]
    as required. 

    It therefore remains to define an $n$-vertex starting graph $G$ and prove that the $H$-process on $G$ simulates $(\tilde G_i)_{i\geq 0}$. The graph $G$ is defined by $V(G):=V(\tilde G)\cup S'$ where $|S'|$ is a set of $n-\tilde n=t+8$ vertices disjoint from $V(\tilde G)$. We also identify a subset $T'\subseteq S'$ of size exactly $t$ and the edges of $G$ are defined to be the edges of $\tilde G$, all the edges between vertices of $S'$ and all the edges between $V(\tilde G)$ and $T'$. Thus    $G$ induces a clique on $S'$ and between $T'$ and $V(\tilde G)$ there is a complete bipartite graph. The only other edges are those given by the induced copy of $\tilde G$ and in particular $G$ satisfies \eqref{item:sim 1} of Definition \ref{def:simulation}. 

    To prove property \eqref{item:sim 2} of Definition \ref{def:simulation} note first that any copy of $K_6^-$ (that is the graph obtained by removing an edge from $K_6$) that is induced on the vertices of $\tilde G$ in the $H$-process $(G_i)_{i\geq 0}$, can be extended to a copy of $H^-$ using only edges of $G$. Indeed, we can use $S'$ to play the role of $S$ in $H$, $T'$ to play the role of $T$ and any vertex of $\tilde G$ that does not lie in the copy of $K_6^-$, to play the role of $v^*$. A simple induction  therefore implies that all the edges occurring in the $K_6$-process $(\tilde G_i)_{i\geq 0}$ will also appear in $(G_i)_{i\geq 0}$, that is, for all $i\geq 0$, $E(\tilde G_i)\setminus E(\tilde G)\subseteq E(G_i)\setminus E(G)$. 

    To prove that $E(G_i)\setminus E(G)\subseteq E(\tilde G_i)\setminus E(\tilde G)$ for all $i\geq 0$, we proceed again by induction on $i\geq 0$, noting that the base case $i=0$ trivially holds. So suppose that the claim holds up to time $i-1$ and consider an edge $e\in E(G_i)\setminus E(G_{i-1})$. There is therefore some edge $e'$ in $E(H)$ and an embedding $\phi:H-e'\rightarrow G_{i-1}$ such that $\phi$ maps the vertices of $e'$ to $e$. We claim that $\phi$ must map $S$ to $S'$ and $T$ to $T'$. Indeed $H-e'$ must contain a copy of  $K^-_{8+t}$ and in particular a copy of $K_{7+t}$ on the vertices of $S$. By induction the only edges in $G_{i-1}$ that have been added are those corresponding to the $K_6$-process $(\tilde G_i)_{i\geq 0}$ up to time $i-1$ and so by Fact \ref{fact:K6noK7}, the largest clique in $G_{i-1}$ on the vertices of $\tilde G$ has size $6$. This in turn implies that the largest clique in $G_{i-1}$ that intersects $V(\tilde G)$ has size at most $6+t$. Therefore the image  under $\phi$ of the copy of $K_{7+t}$ in $H-e'$ must  lie entirely in $S'=V(G)\setminus V(\tilde G)$. 
    By considering the $G_{i-1}$-neighbourhoods of the vertices in $S'$ (which are the same as their $G$-neighbourhoods), we quickly see that $\phi$ must map all of  $S$ to $S'$ and $T$ to $T'$ as claimed. Moreover, as all the edges between vertices of $S'$ are present in $G_{i-1}$ and the vertices of $T'$ are universal in $G_{i-1}$, we have that the missing edge $e'$ must be an edge that lies on the vertices $R$ of $H$ and $\phi|_R$ is an embedding of $K_6^-$ into $G_{i-1}[V(\tilde G)]=\tilde G_{i-1}$ which is completed to a copy of $K_6$ by the new edge $e$.  Hence $e\in E(\tilde G_i)\setminus E(\tilde G)$ as required. This completes the proof showing that $(G_i)_{i\geq 0}$ indeed simulates $(\tilde G_i)_{i\geq 0}$. 
    \end{proof}

In the proof of Theorem \ref{thm:simulate}, we used a large clique as a subgraph of $H$ to be `locked in' throughout the $H$-process $(G_i)_{i\geq 0}$. This worked because the process $(\tilde{G}_{i})_{i\geq 0}$ that we simulated was free of large cliques throughout. In the context of Theorem \ref{thm:counterexample} we cannot afford to use a large clique as  our subgraph $H$ has to be sparse. The following lemma gives a process $(\tilde G_i)_{i\geq 0}$ which we can simulate which is $K_3$-free throughout. In fact even more than this, the graphs $\tilde G_i$ are all bipartite. 

    \begin{lem} \label{lem:cyclechord}
    Define $\tilde H$ to be the $6$-vertex cycle with a chord added between two fixed opposite vertices. That is,  $V(\tilde H) = [0,5]$ and $E(\tilde H) = \{ xy : x-y \equiv 1 \mod 6 \} \cup \{ \{0,3\} \}$. Then for every $n\in \NN$,  there exists a $\tilde H$-process $(\tilde G_i)_{i\geq 0}$ on an $n$-vertex graph $\tilde G$ such that $\tau_{\tilde H}(\tilde G)=\Omega(n)$ and $\tilde G_i$ is bipartite for all $i\geq 0$. 
    \end{lem}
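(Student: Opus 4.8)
The plan is to build $\tilde G$ so that the $\tilde H$-process mimics a $C_4$-process (or a closely related short-cycle process) on a bipartite host, exploiting that $\tilde H$ consists of two $4$-cycles glued along the chord $\{0,3\}$. Concretely, I would take a long-running $C_4$-process construction: by Theorem~\ref{thm:23bound} (applied with $H=C_4$, which has $\delta=2$ and—crucially—is bipartite) there is a \emph{bipartite} starting graph $\tilde G^{C_4}$ on $\Theta(n)$ vertices whose $C_4$-process has running time $\Omega(n)$ and stays bipartite throughout. The idea is then to attach to this a fixed ``key'' gadget $K$ realising the other $4$-cycle of $\tilde H$, placed disjointly (or almost disjointly) from $\tilde G^{C_4}$, so that in the $\tilde H$-process the only way a copy of $\tilde H - e$ can arise is with the key gadget playing the role of the second $4$-cycle and $e$ being a ``missing $C_4$-chord edge'' in the $C_4$-part. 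This is exactly the disjoint-union/``locked-in'' simulation philosophy discussed before Fact~\ref{fact:K6noK7}, now applied with the sparse subgraph $H' = P_3$ (a path on three vertices forming the far side of the auxiliary $4$-cycle through vertices $1,2$ or $4,5$), which by design never appears as a subgraph of a bipartite graph in an unwanted location once we control degrees.

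The key steps, in order, are: (1) Fix the bipartite $C_4$-process $(\tilde G^{C_4}_i)_{i\ge 0}$ with parts $X,Y$, running time $\Omega(n)$, all graphs bipartite. (2) Add a constant-size gadget: two new vertices $a,b$ with $a$ joined to a set $X_0\subseteq X$ and $b$ joined to a set $Y_0\subseteq Y$ (or a similar small bipartite ``cap''), chosen so that $a\in Y$-side, $b\in X$-side of a global bipartition, keeping the whole graph $\tilde G$ bipartite. Label the gadget so that for any potential $C_4$-edge $uv$ with $u\in X,v\in Y$ that the $C_4$-process would add, the four vertices $u, a, v, b$ together with a path $u - w - v$ (the $C_4$ witness) assemble into a copy of $\tilde H$ minus the edge $uv$, i.e. $uv$ plays the role of the chord $\{0,3\}$ and $u a v b$ is the outer $6$-cycle. (3) Show the forward direction: every edge added in the $C_4$-process also gets added in the $\tilde H$-process on $\tilde G$, by induction, using the assembly in step (2). (4) Show the backward direction: every edge added in the $\tilde H$-process lies on $V(\tilde G^{C_4})$ and corresponds to a $C_4$-completion — this is the delicate part, requiring that no copy of $\tilde H - e$ can use the gadget vertices $a,b$ in a ``wrong'' way or avoid the gadget entirely; here one argues using low degrees of $a,b$ and of the $\tilde H - e$ components, together with bipartiteness (which kills any $\tilde H$-copy not respecting the intended bipartition, in particular rules out odd configurations). (5) Conclude $\tau_{\tilde H}(\tilde G) = \tau_{C_4}(\tilde G^{C_4}) = \Omega(n)$ and that $\tilde G_i$ is bipartite for all $i$, since the only edges added are within-part-crossing $C_4$ edges of the bipartite $C_4$-process.

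I expect the main obstacle to be step (4): ruling out \emph{all} alternative embeddings of $\tilde H - e$ (for each of the several non-isomorphic choices of $e\in E(\tilde H)$ — the chord and the cycle edges give genuinely different graphs $\tilde H - e$) into $\tilde G_{i-1}$ that would add an unwanted edge. Since $\tilde H$ is sparse, copies of $\tilde H - e$ are ``easy'' to find, so I must engineer $\tilde G$ very carefully — likely by making the gadget vertices $a,b$ have degree patterns that force them into specific roles, and by choosing the $C_4$-construction of Theorem~\ref{thm:23bound} to additionally be, say, of bounded degree or to avoid $C_6$'s except the intended ones (one can inspect the explicit construction $G'$ from Section~\ref{sec:min2max3}, built from $(\delta-1)$-regular bipartite layers, and verify its short-cycle structure). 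An alternative, possibly cleaner, route is to simulate not a $C_4$-process but the $C_6$-process or a $\tilde H'$-process where $\tilde H' = \tilde H - (\text{chord})= C_6$ itself is not available (it has $\delta=2$, $\Delta=2$, so Theorem~\ref{thm:23bound} does not apply and indeed $M_{C_6}(n)$ is only logarithmic); so the chord is essential and the $C_4$-simulation is the right target. Either way, the technical heart is a careful case analysis of where the two degree-$3$ vertices of $\tilde H$ (namely $0$ and $3$) and their neighbourhoods can land, using bipartiteness to eliminate cases and the gadget's rigidity to pin down the rest.
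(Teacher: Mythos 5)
Your proposal is built on a false premise: you propose to simulate a long $C_4$-process, citing Theorem~\ref{thm:23bound} for the existence of a bipartite $C_4$-process with running time $\Omega(n)$, but $C_4$ has $\Delta(C_4)=2$, so the hypothesis $\Delta(H)\ge 3$ of Theorem~\ref{thm:23bound} fails. Indeed, as your own final paragraph notes for $C_6$, cycles have only logarithmic maximum running time (by \cite{FMSz1}), and this applies equally to $C_4$. So there is no bipartite $C_4$-process with linear running time to simulate, and steps~(1)--(5) of your plan cannot even get started. Even setting that aside, the gadget in step~(2) would almost certainly create spurious copies of $\tilde H - e$: $\tilde H$ is sparse, so pinning down embeddings via degree constraints and bipartiteness alone --- your step~(4) --- would be a delicate and heavy case analysis, and the paper's simulation machinery (Section~\ref{sec:counterexample}) is designed precisely because such control requires rigid subgraphs like $H'$ with many diamonds, not a $P_3$.

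The intended proof is far more direct, and you already have all the pieces in front of you. Apply the ``moreover'' part of Theorem~\ref{thm:23bound} to $\tilde H$ \emph{itself}: $\tilde H$ is bipartite, $\delta(\tilde H)=2$, and $\Delta(\tilde H)=3$ (vertices $0$ and $3$ have degree $3$). This immediately gives a bipartite starting graph $\tilde G$ with $\tau_{\tilde H}(\tilde G)=\Omega(n)$ --- no simulation needed. It remains only to show that bipartiteness is preserved throughout. Suppose for contradiction that $i^*$ is minimal with $\tilde G_{i^*}$ non-bipartite, and let $e$ be an edge added at step $i^*$ inside one part of the bipartition of $\tilde G_{i^*-1}$, completing a copy of $\tilde H$. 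Since every edge of $\tilde H$ lies on a cycle of $\tilde H$, the endpoints of $e$ are joined in $\tilde G_{i^*-1}$ by a path that is part of this $\tilde H$-copy; as they lie in the same part, this path has even length, giving an odd cycle in the embedded copy of $\tilde H$ --- contradicting that $\tilde H$ is bipartite. This is the whole proof. In short, you overlooked that the target graph $\tilde H$ already satisfies the hypotheses of Theorem~\ref{thm:23bound} and reached instead for heavier machinery that does not apply here.
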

    \begin{proof}
    By the `moreover' part of Theorem \ref{thm:23bound}, as $\tilde H$ is bipartite and has minumum degree $\delta(\tilde H)=2$ and maximum degree $\Delta(\tilde H)=3$, we have that there is some $n$-vertex bipartite $\tilde G$ such that the $\tilde H$-process $(\tilde G_i)_{i\geq 0}$ on $\tilde G$ lasts linearly many steps. 

    It remains to establish that each $\tilde G_i$ is bipartite. Suppose for a contradiction that this is not the case and let $i^*$ be the smallest $i\geq 1$ such that $\tilde G_i$ is non-bipartite. Hence there is a new edge $e$ added within one of the parts of $\tilde G$ at step $i^*$ and the edge $e$ completes a copy of $\tilde H$. Every edge of $\tilde H$ is contained in a cycle in $\tilde H$ and so the edge $e$ must form the endpoints of a path in $\tilde G_{i^*-1}$. As the vertices of $e$ lie in the same part of a bipartition of $\tilde G_{i^*-1}$, this path must have even length. This  path along with $e$ give an odd cycle in the embedding of $\tilde H$, contradicting that $\tilde H$ is itself bipartite. 
    \end{proof}

In the following we refer to the graph obtained by deleting an arbitrary edge from $K_4$ as the \emph{diamond}. The role of the subgraph $H'\subseteq H$ that will be locked in place throughout our $H$-process $(G_i)_{i\geq 0}$ will be played by the following graph, a visualisation of which  is shown in Figure \ref{fig:counter_hprime}. 

    \begin{figure}[ht]
    \centering
    \includegraphics[width=0.7\linewidth]{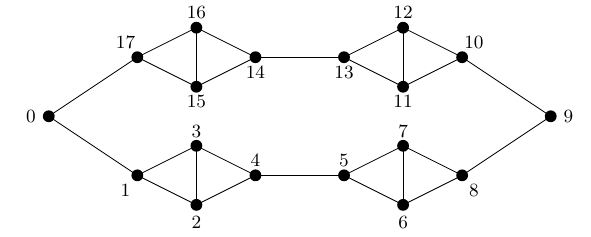}
    \caption{A drawing of $H'$. The vertex $u_i$ is labelled by $i$.}
    \label{fig:counter_hprime}
    \end{figure}

Let $H'$ be a graph with vertices $u_i$, $i\in [0,17]$, and
    \[
    E(H') := \left\lbrace u_iu_{i+1} : i \in [0,16] \right\rbrace \cup \left\lbrace u_{17}u_0,u_1u_3, u_2u_4, u_5u_7, u_6u_8, u_{10}u_{12}, u_{11}u_{13}, u_{14}u_{16}, u_{15}u_{17} \right\rbrace.
    \]
Let 
    \begin{equation}\label{eq:counter_partition}
    U := \{ u_0,\ldots,u_9 \} \quad\text{ and }\quad W:=\{ u_9,\ldots,u_{17},u_0 \}, 
    \end{equation}
so $H'[U]$ and $H'[W]$ are isomorphic and contain two vertex-disjoint diamonds each.

    \begin{obs} \label{obs:H'-self-stable }
    The graph $H'$ is itself $H'$-stable.
    \end{obs}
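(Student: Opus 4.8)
The plan is to reduce the statement, via a count of vertices and edges, to a finite check on isomorphism invariants. First I would record the relevant parameters of $H'$: it has $18$ vertices and $|E(H')|=26$ edges; its degree sequence is $(2,2,3,\ldots,3)$, the only vertices of degree $2$ being $u_0$ and $u_9$; it has exactly $8$ triangles, namely two inside each of the four induced copies of $K_4$ minus an edge spanned by $\{u_1,u_2,u_3,u_4\}$, $\{u_5,u_6,u_7,u_8\}$, $\{u_{10},u_{11},u_{12},u_{13}\}$ and $\{u_{14},u_{15},u_{16},u_{17}\}$ (call these the \emph{diamonds} $D_1,\ldots,D_4$); its six edges lying in no triangle are exactly the \emph{connectors} $u_0u_1$, $u_4u_5$, $u_8u_9$, $u_9u_{10}$, $u_{13}u_{14}$, $u_{17}u_0$; it is $2$-connected; $\dist_{H'}(u_0,u_9)=7$; and the map $u_i\mapsto u_{i+9\bmod 18}$ is an automorphism of $H'$ interchanging $u_0$ and $u_9$.

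The reduction is as follows. Since $H'$ and $H'+e$ have the same $18$ vertices, any copy of $H'$ in $H'+e$ is spanning; as it has $26=|E(H')|$ edges while $H'+e$ has $27$, it is obtained from $H'+e$ by deleting a single edge, and if it uses the new edge $e$ then that deleted edge is some $f\in E(H')$ and the copy is exactly $H'+e-f\cong H'$. Conversely, $H'+e-f\cong H'$ with $f\in E(H')$ produces such a new copy. Hence it suffices to prove that $H'+e-f\not\cong H'$ for every non-edge $e$ and every $f\in E(H')$. A short degree-sequence argument (the graph $H'+e-f$ must again have degree sequence $(2,2,3,\ldots,3)$) shows that, writing $e=xy$ and $f=ab$, only two configurations survive: \textbf{(A)} $e=u_0u_9$ and $f$ arbitrary; or \textbf{(B)} one endpoint of $e$ is a degree-$2$ vertex, which we may take to be $u_0$ by the automorphism above, the other endpoint $x$ is a degree-$3$ vertex with $x\notin N_{H'}(u_0)$, and $f=xb$ with $b$ a degree-$3$ neighbour of $x$.

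To finish I would, in each surviving case, point to an invariant on which $H'+e-f$ differs from $H'$. If adding $e$ creates no triangle but removing $f$ destroys one, then $H'+e-f$ has fewer than $8$ triangles, unlike $H'$: this covers all of case (A) with $f$ lying in a triangle of $H'$, and all of case (B) except when $e=u_0x$ itself completes a triangle. If $f$ is a connector (so removing it cannot change the triangle count) one instead checks directly that $H'+e-f$ has a cut-vertex: deleting any connector detaches (most of) an incident diamond, and the single extra edge $e$ never re-attaches it, contradicting the $2$-connectedness of $H'$; this handles case (A) with $f$ a connector, and case (B) with $f\in\{u_4u_5,u_{13}u_{14}\}$ (the only connectors that can arise there).

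The only remaining possibilities are those of case (B) in which $e=u_0x$ lies in a new triangle, which forces $x$ to share a neighbour with $u_0$; since $N_{H'}(u_0)=\{u_1,u_{17}\}$ this means $x\in\{u_2,u_3,u_{15},u_{16}\}$, leaving only a handful of choices of $f$. For all but four of them the triangle count still drops below $8$; for the four exceptions — up to the $u_0\leftrightarrow u_9$ symmetry these are $(e,f)\in\{(u_0u_2,u_2u_4),(u_0u_3,u_3u_4),(u_0u_{15},u_{14}u_{15}),(u_0u_{16},u_{14}u_{16})\}$ — one observes that the two degree-$2$ vertices of $H'+e-f$ are $u_9$ together with the diamond-endpoint detached by $f$, and these lie at distance $4$, not $7$; for instance $H'+u_0u_2-u_2u_4$ has degree-$2$ vertices $u_9,u_4$ with $\dist(u_9,u_4)=4$. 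These last four cases, where triangle count and $2$-connectedness both agree with those of $H'$ and one must look at the finer way the four diamonds are strung around the ``necklace'' of $H'$ (equivalently, the distance between the two degree-$2$ vertices), are the only subtle point; everything else is routine bookkeeping.
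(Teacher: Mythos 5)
Your proof is correct and follows exactly the reduction the paper itself proposes — namely, checking that $H'+e-f\not\cong H'$ for every non-edge $e$ and edge $f\in E(H')$ — but where the paper simply leaves this finite verification ``for the reader,'' you have carried it out in full, organized cleanly by degree sequence, triangle count, $2$-connectedness, and finally the distance between the two degree-$2$ vertices.
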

    \begin{proof} 
    This is a simple case of checking that for every $e_1\in E(H')$ and $e_2\in \binom{V(H')}2 \setminus E(H')$, $H'$ and $H' - e_1 + e_2 $ are not isomorphic.
    We leave the details for the reader.
    \end{proof}

We now define $H$ which contains both $H'$ (with vertices labelled as above)  and $\tilde H$ as induced subgraphs, has an edge between $u_9$ and a degree $2$ vertex of $\tilde H$ and an edge between $u_0$ and a new vertex which we label $z\in V(H)$. See Figure \ref{fig:fullcounterexample} for a drawing of $H$. Clearly $\delta(H)=1$ and $\Delta(H)=3$ as required. 

    \begin{figure}[ht]
    \centering
    \includegraphics[width=0.7\linewidth]{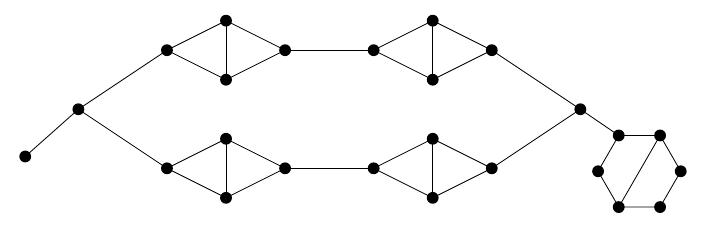}
    \caption{A drawing of $H$. }
    \label{fig:fullcounterexample}
    \end{figure}

We are now ready to prove Theorem \ref{thm:counterexample}.

    \begin{proof}[Proof of Theorem \ref{thm:counterexample}]
    Take $n\in \NN$ with $n\geq 19$ and let $n'=n-18$.  Fix $\tilde H$ as in Lemma \ref{lem:cyclechord} and let $\tilde G$ be a starting graph with $n'$ vertices such that the $\tilde H$-process $(\tilde G_i)_{i\geq 0}$ on  $\tilde G$ has $\tau_{\tilde H}(\tilde G)=\Omega(n')=\Omega(n)$ and $\tilde G_i$ is bipartite for all $i\geq 0$, the existence of which is given by Lemma \ref{lem:cyclechord}.

    We now construct a graph $G$ on $n$ vertices.
    Let $H'$ be as defined above, take $H'_0$ to be isomorphic to $H'$ and vertex-disjoint from $\tilde G$, and fix an isomorphism $\varphi': H' \to H'_0$.
    Set $u'_0 := \varphi'(u_0)$, $u'_9 :=\varphi'(u_9)$ and define $G$ via
        \begin{linenomath}\begin{equation}\begin{split}\label{eq:counter_defofG}
        V(G) &= V(\tilde G) \cup V(H'_0),  \\
        E(G) &= E(\tilde G) \cup E(H'_0) \cup \left\lbrace u'_0x : x\in V(\tilde G) \right\rbrace \cup \left\lbrace u'_9x : x\in V(\tilde G) \right\rbrace.
        \end{split}\end{equation}\end{linenomath}

    Finally we let $H$ be as defined above (Figure \ref{fig:fullcounterexample}) and as in the proof of Theorem \ref{thm:simulate}, we will show that the $H$-process $(G_i)_{i\geq 0}$ on $G$ simulates the $\tilde H$-process $(\tilde G_i)_{i\geq 0}$ on $\tilde G$. This then suffices to prove Theorem \ref{thm:counterexample} due to Observation \ref{obs:simulate}. 

    Note that condition \eqref{item:sim 1} of Definition \ref{def:simulation} is certainly  fulfilled by construction and, as with Theorem \ref{thm:simulate}, the inclusion $E(\tilde G_i)\setminus E(\tilde G)\subseteq  E(G_i)\setminus E(G)$   for all $i\geq 0$ follows from induction and  the fact that for any $e\in E(\tilde H)$, any copy $K$ of  $\tilde H-e$ on the vertices $V(\tilde G)$ can be extended to a copy of $H-e$ using $\phi'(H')\subseteq G$ and an arbitrary vertex $v\in V(\tilde G)\setminus V(K)$ to play the role of $z\in V(H)$. 

    Finally then, it remains to show that 
        \begin{linenomath}\begin{equation}\label{eq:counter_stronger}
        E(G_i)\setminus E(G) \subseteq E(\tilde G_i)\setminus E(\tilde G) \qquad\text{for each } i\geq 0.
        \end{equation}\end{linenomath} 
    We proceed by induction on $i\geq 0$, noting that the base case $i=0$ is vacuously true.
    So suppose that $i\geq 1$ and $E(G_{i-1})\setminus E(G) \subseteq E(\tilde G_{i-1})\setminus E(\tilde G)$.
    Any copy of $H-e$, where $e\in E(H)$, contains a copy of $H'$ or $H'-e$.
    For this reason the following claim will be helpful.

        \begin{clm}\label{clm:counter_embedding}
        Let $e'\in E(H')$.
        Every embedding $\phi:H'-e'\to G_{i-1}$ satisfies $\phi(V(H')) = V(H'_0)$ and $\{\phi(u_0), \phi(u_9)\} = \{u'_0, u'_9\}$.
        \end{clm}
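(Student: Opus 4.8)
The plan is to first read off the precise structure of $G_{i-1}$ from the outer induction hypothesis, and then analyse where a hypothetical embedding can send the diamonds ($K_4$ minus an edge) that are packed inside $H'$.

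\textbf{Structure of $G_{i-1}$.} Since $E(G_{i-1})\setminus E(G)\subseteq E(\tilde G_{i-1})\setminus E(\tilde G)$, the description \eqref{eq:counter_defofG} of $G$ shows that $G_{i-1}$ is the disjoint union of $\tilde G_{i-1}$ on $V(\tilde G)$ and $H'_0$ on $V(H'_0)$, together with all edges from $u'_0$ and from $u'_9$ to $V(\tilde G)$. In particular: (a) $G_{i-1}[V(\tilde G)]=\tilde G_{i-1}$ is bipartite, hence triangle-free; (b) $u'_0$ and $u'_9$ are the only vertices of $V(H'_0)$ with a neighbour in $V(\tilde G)$; (c) inside $H'_0$ the vertices $u'_0,u'_9$ have degree $2$, $u'_0u'_9\notin E(H'_0)$, and $N_{H'_0}(u'_0)$, $N_{H'_0}(u'_9)$ are independent sets; and (d) $G_{i-1}$ is $K_4$-free (a $K_4$ meeting $V(\tilde G)$ forces a triangle in $\tilde G_{i-1}$ or an edge between $u'_0$ and $u'_9$, while $H'$, hence $H'_0$, is $K_4$-free). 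I also record three facts about $H'$, all verified directly from its definition: $H'$ has exactly the four pairwise vertex-disjoint diamonds $D_1,\dots,D_4$ on the vertex sets $\{u_1,\dots,u_4\}$, $\{u_5,\dots,u_8\}$, $\{u_{10},\dots,u_{13}\}$, $\{u_{14},\dots,u_{17}\}$; the vertices $u_0,u_9$ are the only two vertices of $H'$ of degree $2$, and they lie in none of the $D_j$; and deleting one edge from a diamond leaves a triangle unless that edge joins its two degree-$3$ vertices, in which case what remains is a $4$-cycle.

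\textbf{Classifying images of diamonds.} Let $\phi\colon H'-e'\to G_{i-1}$ be an embedding and set $A:=\phi^{-1}(V(\tilde G))$, $B:=\phi^{-1}(V(H'_0))$ and $B_0:=\phi^{-1}(\{u'_0,u'_9\})\subseteq B$, so $|B_0|\le 2$. By (d) the image of any diamond of $H'-e'$ is again a diamond (not a $K_4$) in $G_{i-1}$, so $\phi$ sends the non-adjacent pair of such a diamond to the non-adjacent pair of its image. Using (a)--(c), every diamond $D\subseteq G_{i-1}$ satisfies exactly one of: (I) $V(D)\subseteq V(H'_0)$, so $D$ is one of $\varphi'(D_1),\dots,\varphi'(D_4)$ — and then $u'_0,u'_9\notin V(D)$, since $\varphi'(u_0),\varphi'(u_9)$ lie in no $\varphi'(D_j)$; (II) exactly one vertex of $D$ lies in $V(H'_0)$, and it is a degree-$3$ vertex of $D$ equal to $u'_0$ or $u'_9$; (III) exactly two vertices of $D$ lie in $V(H'_0)$, they are its non-adjacent pair, and they are $\{u'_0,u'_9\}$. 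Moreover every $A$--$B$ edge of $H'-e'$ is incident to $B_0$ by (b), so any $v\in B\setminus B_0$ has $N_{H'-e'}(v)\subseteq B$, and by (c) each $b\in B_0$ has at most two $B$-neighbours, forming an independent set.

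\textbf{Showing $A=\emptyset$.} Suppose not. As $H'-e'$ is connected with $A,B$ both nonempty, there is an $A$--$B$ edge, so $B_0\ne\emptyset$; and $(H'-e')[A]$ embeds into the bipartite graph $\tilde G_{i-1}$, hence is triangle-free. At most one $D_j$ contains $e'$ and at most two meet $B_0$, so some $D_j$ is a diamond in $H'-e'$ disjoint from $B_0$; its image must have type (I) (types (II),(III) would put a vertex of $D_j$ into $B_0$), whence $D_j\subseteq B$, and the component $C$ of $(H'-e')-B_0$ containing $D_j$ lies entirely in $B\setminus B_0$ (an $A$-vertex of $C$ would create an $A$--$B$ edge missing $B_0$). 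So $A$ is a union of the triangle-free components of $(H'-e')-B_0$; in particular $A\ne V(H')\setminus B_0$, since deleting $\le 2$ vertices leaves $\ge 2$ of the $D_j$ intact. A short inspection of $H'$ then shows that, with $|B_0|\le 2$, a nonempty triangle-free component of $(H'-e')-B_0$ can only be a single spacer $\{u_0\}$ or $\{u_9\}$, or a thin remnant of a single diamond $D_j$ (a $4$-cycle, a hub pair, or a leaf) all of whose outside-neighbours have been forced into $B_0$. In each of these finitely many configurations one of two things occurs: some $b\in B_0$ is forced to have two \emph{adjacent} neighbours inside $B$ (e.g.\ two consecutive vertices of an adjacent diamond), contradicting the independence statement above; or, when that does not happen, the value of $\phi$ on the vertices around $b$ propagates rigidly through $H'_0$ and runs into a repeated image, contradicting injectivity of $\phi$. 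Hence $A=\emptyset$, i.e.\ $\phi(V(H'))=V(H'_0)$.

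\textbf{Identifying the copy, and the main obstacle.} With $A=\emptyset$, $\phi$ is a bijection $V(H')\to V(H'_0)$ and an embedding of $H'-e'$ into $H'_0$, hence omits exactly one edge $f'$ of $H'_0$. If $\phi(e')\notin E(H'_0)$ then $\phi$ is an isomorphism $H'-e'\to H'_0-f'$, so $H'_0-f'+\phi(e')\cong H'$; transporting by $\varphi'$ this reads $H'-e_1+e_2\cong H'$ with $e_1\in E(H')$, $e_2\notin E(H')$, contradicting Observation~\ref{obs:H'-self-stable }. So $\phi(e')\in E(H'_0)$, $\phi$ is an isomorphism $H'\to H'_0$, and $\varphi'^{-1}\circ\phi$ is an automorphism of $H'$; as it preserves degrees it fixes the set $\{u_0,u_9\}$ of degree-$2$ vertices setwise, giving $\{\phi(u_0),\phi(u_9)\}=\{\varphi'(u_0),\varphi'(u_9)\}=\{u'_0,u'_9\}$. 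The hard part is the penultimate step: giving a clean, exhaustive argument that no nonempty triangle-free ``$A$-side'' can survive. This rests on the detailed combinatorics of $H'$ — that its only small separators trap nothing bigger than a spacer vertex or a thin diamond remnant, and that each such remnant is blocked either by the independent-neighbourhood property (c) or by the near-rigidity of $H'_0$ under $\phi$ — and is where essentially all of the case-checking lives; the rest is bookkeeping around the diamond classification.
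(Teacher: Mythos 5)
Your set-up is sound and overlaps with the paper's own argument: your classification (I)/(II)/(III) of diamonds in $G_{i-1}$ is the same as the paper's type $1/2/3$, and your endgame — once $\phi(V(H'))=V(H'_0)$ is known, use the $H'$-stability of $H'$ (Observation~\ref{obs:H'-self-stable }) to force $\phi$ to be an isomorphism $H'\to H'_0$ and read off $\{\phi(u_0),\phi(u_9)\}=\{u'_0,u'_9\}$ from degree preservation — is correct and, if anything, a bit slicker than what happens at that stage in the paper. The genuine gap is the central step, your claim that $A=\emptyset$. You reduce it to ``nonempty triangle-free components of $(H'-e')-B_0$'', assert that a short inspection shows these can only be a spacer vertex $u_0$ or $u_9$ or a thin remnant of one diamond, and then assert that each such configuration contradicts either the independence of $N_{H'_0}(u'_0),N_{H'_0}(u'_9)$ or injectivity of $\phi$ via ``rigid propagation''. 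You concede explicitly that ``this is where essentially all of the case-checking lives.'' As written this is an outline, not a proof: $B_0$ ranges over all $\le 2$-element subsets of $V(H')$, $e'$ over all edges of $H'$, and nothing in the write-up actually verifies that the surviving triangle-free pieces are small, nor carries out the rigidity/injectivity contradictions.

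The paper avoids the enumeration by exploiting the deliberate design of $H'$ rather than brute force. The key structural fact is that deleting a single edge $e'$ kills at most one of the four disjoint diamonds $D_1,\dots,D_4$ and at most one of the four bridge edges $u_0u_1,u_0u_{17},u_9u_8,u_9u_{10}$; hence there is always some $w\in\{\phi(u_0),\phi(u_9)\}$ that is still adjacent in $\phi(H'-e')$ to degree-$2$ vertices of two surviving disjoint diamonds $K_1,K_2$, with a third disjoint diamond $L$ also surviving. If $w\notin\{u'_0,u'_9\}$ then, by your own classification, $K_1,K_2$ must both be type-(II) diamonds with $u'_0,u'_9$ as their degree-$3$ vertices; this consumes every edge leaving $V(H'_0)$ and leaves the type-(I) diamond $L$ disconnected from $w$, contradicting connectedness of $\phi(H'-e')$ (here $2$-connectivity of $H'$ is what makes $H'-e'$ connected). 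A second targeted argument, using the subgraph $F\cong H'[U]\cong H'[W]$ — two diamonds joined by a path, with $u_0,u_9$ as the degree-$1$ ends, at least one copy of which always survives $e'$ — forces the other of $\{\phi(u_0),\phi(u_9)\}$ into $\{u'_0,u'_9\}$ as well. Finally, the observation that every triangle of $G_{i-1}$ lies in a type-(I) diamond or contains $u'_0$ or $u'_9$ pushes the entire image into $V(H'_0)$. The missing ingredient in your approach is precisely this ``three surviving disjoint diamonds $+$ $2$-connectivity'' observation; without it, the component-by-component analysis you sketch would have to be carried out in full to make the claim rigorous.
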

        \begin{proof}
        Let $\phi:H'-e'\to G_{i-1}$ be an embedding, and let $H'' := \phi(H'-e')$. Now the only diamonds in $G_{i-1}$ are the four vertex disjoint ones in $H'_0$ which we call \emph{type 1 diamonds}, those which use either $u_0'$ or $u_9'$ as a degree 3 vertex and have all other vertices in $V(\tilde G)$, which we call \emph{type 2 diamonds} and finally, the \emph{type 3 diamonds} which are those which use \emph{both} $u_0'$ and $u_9'$ as degree 2 vertices and have degree $3$ vertices in $V(\tilde G)$. Indeed, the fact that there are no diamonds in $G_{i-1}$ using just one of $u_0'$ and $u'_9$ as a degree 2 vertex follows from the fact that $G_{i-1}[V(\tilde G)]=\tilde G_{i-1}$   by induction and so $G_{i-1}[V(\tilde G)]$ is bipartite. 
   
        Now note  that there are disjoint diamonds $K_1,K_2, L$ in  $H''\subseteq G_{i-1}$ and some vertex $w\in \{\phi(u_0), \phi(u_9)\}$ that is adjacent in $H''\subseteq G_{i-1}$ to degree 2 vertices in both $K_1$ and $K_2$. We claim that $w\in \{u_0',u_9'\}$. Indeed, if this was not the case then the only possibility for $G_{i-1}$ to have $w$ incident to degree two vertices of disjoint diamonds $K_1$ and $K_2$ is if $K_1$ and $K_2$ are both type 2 diamonds and $w\in V(\tilde G)$. In that case, the third disjoint diamond $L$ must be a type 1 diamond.  However then $L$ is disconnected from $w$ in $H''$ as both the vertices $u_0'$ and $u'_9$ already have degree 3 in the diamonds $K_1$ and $K_2$ and so none of the edges incident to $u_0'$ or $u_9'$ in $H_0'$ can be present in $H''$. This contradicts the fact that $H''$ is connected (which follows as $H'$ is 2-connected). 

        So we have that $w\in \{u_0',u_9'\}$ and we now  let $w'\in \{\phi(u_0),\phi(u_9)\}$ be such that $w'\neq w$. Consider the graph $F$ which is isomorphic to $H'[U]$ and $H'[W]$ (see \eqref{eq:counter_partition}) and note that there must be a copy of $F$ in $H''\subseteq G_{i-1}$ which has $w$ and $w'$ as its degree 1 vertices. As $w\in \{u'_0,u_9'\}$, the only way to embed $F$ with $w$ as a degree one vertex is to use two of the type 1 diamonds for $F$ and have $w'\in \{u'_0,u_9'\}\setminus \{w\}$. Indeed, if the diamond closest to $w$ in the copy of $F$ is type 2, then there is no way to find the second diamond in $F$ which is adjacent to the first and disjoint from $w$  and the first diamond. So both diamonds in the copy of $F$ are type 1 and $\{w,w'\}=\{\phi(u_0),\phi(u_9)\}=\{u'_0,u'_9\}$ as required. Finally, by considering the fact that all triangles in $G_{i-1}$ are inside type 1 diamonds or contain one of $u'_0$ and $u'_9$, we see that $H''=\phi(H'-e')$ must indeed be contained in $V(H_0')$, completing the proof of the claim.
        \end{proof}

    Let $\tilde e \in E(G_i)\setminus E(G)$.
    Our goal is to show that $\tilde e \in E(\tilde G_i)\setminus E(\tilde G)$.
    We can assume $\tilde e \in E(G_i)\setminus E(G_{i-1})$ as otherwise we would be done by the induction hypothesis.
    Therefore there exists an edge $e\in E(H)$ and an embedding $\varphi: H \to G_i$ such that $\varphi(e) = \tilde e$ and $\varphi(H-e) \subseteq G_{i-1}$.
    Let $e' := e$ if $e \in E(H')$ and an arbitrary edge of $H'$ otherwise.
    Since $\varphi_{|V(H')}$ can be regarded as an embedding of $H'-e'$ into $G_{i-1}$ we may apply Claim \ref{clm:counter_embedding} to obtain $\varphi(V(H')) = V(H'_0)$ as well as $\{ \varphi(u_0),\varphi(u_9) \} = \{ u'_0, u'_9 \}$.
    Since $H'$ is $H'$-stable by Observation \ref{obs:H'-self-stable }, we conclude that $e\notin E(H')$.
    The only edges of $H$ with precisely one endpoint in $V(H')$ are $u_0z$ and $u_9\tilde v$ where $\tilde v$ is the unique vertex of $V(\tilde H)$ connected to $V(H')$ in $H$.
    However, both $u'_0$ and $u'_9$ are already adjacent to every vertex in $V(\tilde G)$ at time $0$, so the endpoints of $e$ must lie in $V(\tilde H)$.
    Then $\varphi_{|V(\tilde H)}$ is an embedding of $\tilde H$ in $\tilde G_i$ with $\varphi_{|V(\tilde H)}(\tilde H-e) \subseteq \tilde G_{i-1}=G_{i-1}[V(\tilde G)]$.
    This implies $\tilde e \in E(\tilde G_i)$, which completes the induction and thus shows that $(G_i)_{i\geq 0}$ does indeed simulate the process $(\tilde G_i)_{i\geq 0}$, completing the proof of the theorem.
    \end{proof}

\section{A linear upper bound under restricted connectivity} \label{sec:girth}
In this section we give the proof of Theorem \ref{thm:girth_construction} giving a linear upper bound on $M_H(n)$ for graphs $H$ that have restricted connectivity and are self-percolating, that is, the $H$-process on $H$ itself stabilises at a clique $K_{v(H)}$. We remark that our proof develops the method of Bollob\'as, Przykucki, Riordan and Sahasrabudhe~\cite{bollobas2017maximum} in showing that $M_{K_4}(n)\leq n-3$, which shows that if the process does not stabilise, then there is a clique that grows throughout the process. In their proof, they show that each step leads to a larger clique whilst we will only be able to show that the intervals between times at which the clique grows are bounded by some constant.    Before embarking on the proof, we give a simple lemma that holds for any   $H$-process $(G_{i})_{i\geq 0}$ and identifies a connected sequence of copies of $H$ such that the $i^{th}$ copy is completed at time $i$.

    \begin{lem} \label{lem:seq}
    Let $H$ be a graph and suppose $(G_i)_{i\geq 0}$ is a $H$-process on a starting graph $G$, with  $\tau:=\tau_H(G)$ the time at which the process stabilises. Then there exists a sequence $H_1,\ldots, H_\tau$ of copies of $H$ on $V(G)$ such that   $H_i$ is completed at time $i$ for all $i\in [\tau]$ and $|V(H_i)\cap V(H_{i+1})|\geq 2$ for $1\leq i\leq \tau -1$.
    \end{lem}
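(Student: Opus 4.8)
The plan is to build the sequence backwards, starting from the last edge added and tracing a chain of overlapping copies of $H$ down to time $1$. First I would observe that since $\tau=\tau_H(G)$, for every $i\in[\tau]$ we have $E(G_i)\setminus E(G_{i-1})\neq\emptyset$, so we may pick some edge $e_i\in E(G_i)\setminus E(G_{i-1})$ and a copy $H_i$ of $H$ on $V(G)$ that is completed at time $i$, meaning $e_i\in E(H_i)$ and $E(H_i)\setminus\{e_i\}\subseteq E(G_{i-1})$. This already gives copies completed at each time; the only thing that needs work is arranging the overlap condition $|V(H_i)\cap V(H_{i+1})|\geq 2$.

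The key point is that when $H_{i+1}$ is completed at time $i+1$, its ``missing'' edge $e_{i+1}$ lies in $E(G_{i+1})\setminus E(G_i)$, while all its other edges are present already at time $i$. So $e_{i+1}$ itself was added at some time $j\le i+1$; more usefully, the edge $e_{i+1}$, being absent from $G_i$ but present in $G_{i+1}$, must be $e_{i+1}$ added precisely at step $i+1$. Hmm — the cleaner route: I would instead select $H_{i+1}$ first (arbitrarily, as a copy completed at time $i+1$ with missing edge $e_{i+1}$), and then choose $H_i$ to be a copy of $H$ completed at time $i$ \emph{whose missing edge $e_i$ is one of the edges of $H_{i+1}$ that lies in $E(G_i)\setminus E(G_{i-1})$}. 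For this to be possible I need that some edge of $H_{i+1}$ other than $e_{i+1}$ was added at exactly time $i$. This need not hold in general, so the backward selection must be done more carefully: starting from any copy $H_\tau$ completed at time $\tau$, I trace one edge $f_{\tau-1}\in E(H_\tau)$ with $f_{\tau-1}\in E(G_{\tau-1})\setminus E(G_{\tau-2})$ — such an edge exists because $E(H_\tau)\setminus\{e_\tau\}\subseteq E(G_{\tau-1})$ and if \emph{all} of these edges already lay in $G_{\tau-2}$ then $e_\tau$ would complete a copy of $H$ at time $\tau-1$, contradicting $e_\tau\notin E(G_{\tau-1})$. Then let $H_{\tau-1}$ be a copy of $H$ completed at time $\tau-1$ with missing edge $f_{\tau-1}$; since $f_{\tau-1}\in E(H_\tau)$, both endpoints of $f_{\tau-1}$ lie in $V(H_\tau)\cap V(H_{\tau-1})$, giving the required overlap of at least $2$. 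Iterating this construction downward from $\tau$ to $1$ produces the full sequence $H_1,\dots,H_\tau$.

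More precisely, the induction hypothesis at step $k$ (going down from $\tau$) is: we have a copy $H_k$ of $H$ completed at time $k$, with designated missing edge $e_k\in E(G_k)\setminus E(G_{k-1})$. For the step, note $E(H_k)\setminus\{e_k\}\subseteq E(G_{k-1})$; if all these edges belonged to $E(G_{k-2})$, then $H_k-e_k$ would already be a copy of $H$ minus an edge in $G_{k-2}$ and adding $e_k$ would create a new copy of $H$, forcing $e_k\in E(G_{k-1})$, a contradiction (for $k\ge 2$). Hence some $f\in E(H_k)\setminus\{e_k\}$ satisfies $f\in E(G_{k-1})\setminus E(G_{k-2})$; take $H_{k-1}$ to be any copy of $H$ completed at time $k-1$ with missing edge $f$ (this exists by definition of the process at step $k-1$, since $f$ is a newly added edge). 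Set $e_{k-1}:=f$. Since $f\in E(H_k)$ and $f\in E(H_{k-1})$, the two endpoints of $f$ lie in $V(H_k)\cap V(H_{k-1})$, so $|V(H_k)\cap V(H_{k-1})|\ge 2$. This completes the downward induction and yields the desired sequence.

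\textbf{Main obstacle.} The one subtle point — and the step I would be most careful about — is the claim ``if all edges of $H_k-e_k$ lie in $G_{k-2}$ then $e_k\in E(G_{k-1})$.'' This is exactly the statement that $H_k$, minus its edge $e_k$, witnesses the addition of $e_k$ one step earlier; it follows immediately from the definition of the $H$-process, since $n_H(G_{k-2}+e_k)>n_H(G_{k-2})$ whenever $G_{k-2}$ contains a copy of $H-e_k$ completed by $e_k$ and $e_k\notin E(G_{k-2})$. One must also check the boundary case $k=2$ (where $G_{k-2}=G_0=G$) and simply note the argument still applies, and that for $k=1$ nothing needs to be done since $H_1$ is the final copy in the sequence. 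Everything else is routine.
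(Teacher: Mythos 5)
Your proof is correct and follows essentially the same route as the paper's: both build the sequence backwards from $H_\tau$, using the key observation that if every edge of $H_{k}$ other than its missing edge $e_k$ were already present in $G_{k-2}$, then $e_k$ would have been added at time $k-1$, contradicting $e_k\notin E(G_{k-1})$; hence some edge $f$ of $H_k$ is new at time $k-1$ and any copy of $H$ completed by $f$ serves as $H_{k-1}$, sharing the two endpoints of $f$ with $H_k$. (Your mid-proof worry that "this need not hold in general" is resolved by exactly the contradiction you then supply, which is the same one the paper uses.)
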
   
    \begin{proof}
    The lemma is trivially true when $\tau(H)\leq 1$ so we can assume otherwise.
    In particular, $e(H)\geq 2$. 
    We will define the sequence inductively in reverse order.  Let $H_\tau$ be some copy of $H$ that is a subgraph of $G_\tau$ but not $G_{\tau-1}$, noting that such a copy must exist by the definition of $\tau$. Now suppose that $1\leq t < \tau$ and for all $s$ such that $t<s\leq \tau$, we have found an appropriate $H_s$ that is completed at time $s$. In particular, there is an edge $e\in E(H_{t+1})$ that completes $H_{t+1}$ at time $t+1$ in the process $(G_i)_{i\geq 0}$. We claim that there is another edge $e'\in E(H_{t+1})\setminus \{e\}$ that is in $E(G_{t})\setminus E(G_{t-1})$. Indeed, if this was not the case, then all edges of $H_{t+1}$ apart from $e$ would be present at time $t-1$ and thus edge $e$ would be added at time $t$, a contradiction. Finally then, we define $H_t$ to be a copy of $H$ completed by edge $e'$ at time $t$. Defining such $H_t$ for all $1\leq t < \tau$ in reverse order thus defines the sequence as in the statement of the lemma.
    \end{proof}

We also need a simple observation about self-percolating graphs. 

    \begin{obs} \label{obs:self perc}
    Let $H$ be a graph such that $\final{H}_H=K_{v(H)}$. Then for any graph $G$ such that $H\subseteq G$ and $v(G)=v(H)$, we have that $\final{G}_H=K_{v(H)}$.
    \end{obs}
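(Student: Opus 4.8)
The plan is to derive this directly from the monotonicity of bootstrap processes recorded in Observation \ref{obs:hom}. First I would observe that the hypotheses $H\subseteq G$ and $v(G)=v(H)$ force $H$ to be a \emph{spanning} subgraph of $G$: after identifying the vertex sets we may assume $V(H)=V(G)$ and $E(H)\subseteq E(G)$, which is exactly the setting in which Observation \ref{obs:hom} applies.

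Next, writing $(H_i)_{i\geq 0}$ for the $H$-process on $H$ and $(G_i)_{i\geq 0}$ for the $H$-process on $G$, Observation \ref{obs:hom} yields $H_i\subseteq G_i$ for every $i\geq 0$. Picking $i$ large enough that $H_i=\final{H}_H=K_{v(H)}$, which exists by hypothesis, gives $K_{v(H)}\subseteq G_i$; but $G_i$ is a graph on $v(G)=v(H)$ vertices, so in fact $G_i=K_{v(H)}$. Since $K_{v(H)}$ has no non-edges it is trivially $H$-stable, so the $H$-process on $G$ has stabilised at $G_i=K_{v(H)}$, i.e. $\final{G}_H=K_{v(H)}$. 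Alternatively one can phrase the same argument through Observation \ref{obs:final}: $K_{v(H)}$ is an $H$-stable graph on $v(H)$ vertices in which $G$ appears, hence $\final{G}_H\subseteq K_{v(H)}$, while $\final{G}_H\supseteq\final{H}_H=K_{v(H)}$ again by Observation \ref{obs:hom}, so the two coincide.

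There is essentially no obstacle in this argument; the only point meriting a word of care is the bookkeeping identification of $V(H)$ with $V(G)$ so that Observation \ref{obs:hom} can be invoked verbatim, and the (immediate) remark that the complete graph is $H$-stable so that the process genuinely terminates there.
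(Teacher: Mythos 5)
Your proof is correct, and your primary argument (via Observation~\ref{obs:hom}: $H\subseteq G$ gives $H_i\subseteq G_i$, so once $H_i=K_{v(H)}$ the process on $G$ must also be complete) is a slightly more direct route than the paper, which instead argues through Observation~\ref{obs:final} that no incomplete $H$-stable graph on $v(H)$ vertices can contain $H$. Your ``alternative'' phrasing in the second paragraph is essentially the paper's argument verbatim, so the two are minor variations on the same underlying monotonicity and both are sound.
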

    \begin{proof}
    Let $k=v(H)$. Recall from Observation \ref{obs:final} that for any $G'$ on $k$ vertices, we have that $\final{G'}_H$ is the smallest $k$-vertex $H$-stable graph in which $G'$ appears as a subgraph. Hence, as $\final{H}_H=K_{k}$, we have that there are no incomplete $k$-vertex graphs which contain $H$ as a subgraph and are $H$-stable. As $H\subseteq G$ for $G$ as in the statement, again appealing to Observation \ref{obs:final} gives that indeed  $\final{G}_H=K_{k}$. 
    \end{proof}

We are now ready to give the proof of Theorem \ref{thm:girth_construction}. 

    \begin{proof}[Proof of Theorem \ref{thm:girth_construction}]
    Fix $H$ and $e\in E(H)$ so that $\kappa (H-e)\leq 2$ and $\final{H}_H=K_{v(H)}$, as in the statement of the theorem. Moreover, let $n\in \NN$, let $G$ be an  $n$-vertex graph with $\tau:=\tau_H(G)=M_H(n)$ and let $(G_i)_{i\geq 0}$ be the $H$-process with starting graph $G$. Finally, fix $c=v(H)^2$. 

    The following claim is at the heart of our argument. 
        \begin{clm}\label{clm:superlinear_3connected}
        Suppose $0\leq i\leq \tau-c$ and there are vertex sets  $U,W\subset V(G)$ each of size at least $v(H)$ and such that $|U\cap W|\geq 2$ and both $G_i[U]$ and $G_i[W]$ are cliques.  Then $U\cup W$ hosts a clique at time $i+c$.
	\end{clm}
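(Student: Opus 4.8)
The plan is to combine the self-percolation hypothesis $\final H_H = K_{v(H)}$ with the low-connectivity hypothesis $\kappa(H-e) \le 2$ to show that from two overlapping cliques we can, in a bounded number of steps, merge them into a single clique on $U \cup W$. Write $k := v(H)$, so $c = k^2$. The overall strategy is to add the vertices of $W \setminus U$ to the clique on $U$ one at a time, each addition costing at most $k$ steps of the process; since $|W\setminus U| \le |V(H)| \le k$ (indeed at most $k-2$), this gives a total of at most $k^2 = c$ steps.

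\smallskip

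\textbf{Step 1: reduce to adding one vertex.} It suffices to prove the following: if at some time $j \le \tau - k$ there is a clique on a set $Z$ with $|Z| \ge k$ and a vertex $w \notin Z$ with at least one $G_j$-neighbour in $Z$ — in fact we will arrange that $w$ has many neighbours in $Z$ — then at time $j+k$ there is a clique on $Z \cup \{w\}$. Starting from $Z_0 := U$ and iterating this over the vertices $w \in W \setminus U$ (using that $|U \cap W| \ge 2$ guarantees the relevant neighbours exist, and is preserved/strengthened as the clique on $Z$ grows since $G_j[W]$ is also a clique so every vertex of $W\setminus U$ is adjacent to the $\ge 2$ vertices of $U \cap W$) yields the claim after at most $k \cdot |W\setminus U| \le k^2 = c$ steps. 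I need to be a little careful that the times stay within $[i, i+c]$ and that $i + c \le \tau$ is used only to guarantee we have room to run the process; actually if the process stabilises earlier the conclusion is only easier, so the bound $i \le \tau - c$ is just to make sure no contradiction arises with Lemma~\ref{lem:seq}-type reasoning — but here we do not even need that, we only need that the process can run $c$ more steps, which the hypothesis provides.

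\smallskip

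\textbf{Step 2: adding one vertex using $\kappa(H-e)\le 2$ and self-percolation.} This is the heart of the matter. Suppose $G_j[Z]$ is a clique with $|Z| \ge k$ and $w \notin Z$ has at least $2$ neighbours in $Z$ (we can ensure $\ge 2$: the two vertices of $U\cap W$ do the job initially, and once more vertices of $W$ are absorbed into $Z$ there are even more). Since $\kappa(H-e) \le 2$, there is a vertex cut $\{a,b\}$ (or a single cut vertex, or $H-e$ is disconnected — handle these degenerate cases similarly or more easily) separating $H - e$ into pieces. Let the two endpoints of $e$ be $p, q$; they lie in $H - e$, possibly in the same piece or in different pieces relative to the cut $\{a,b\}$. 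The idea is: embed a large portion of $H - e'$ (for a suitable edge $e'$) so that one "side" of the cut, together with the cut vertices $a,b$, maps into the clique $Z$, while the small remaining side, together with $a,b$ again mapped to two neighbours of $w$ in $Z$, uses $w$. More precisely, map $a,b$ to two neighbours of $w$ in $Z$, map one side of the cut entirely inside $Z$ (possible since $Z$ is a clique of size $\ge k \ge |V(H)|$), and map the other side — which has at most $k-2$ vertices — using $w$ together with further vertices of $Z$; all edges needed between $w$ and $Z$-vertices on that small side are present because... hmm, this is where it gets delicate: $w$ might not be adjacent to all of them yet.

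\smallskip

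\textbf{Where the real work is, and how to finish.} The clean way around the adjacency problem is to use self-percolation as a black box on a bounded-size configuration. Consider the subgraph $G' := G_j[Z \cup \{w\}]$: it contains a clique on $Z$ (size $\ge k$) plus the vertex $w$ with $\ge 2$ edges to $Z$. I claim $G'$ contains a copy of $H$: take any $k$ vertices of $Z$ forming $K_k \supseteq H$... that is trivial but doesn't involve $w$. Instead, the point of $\kappa(H-e) \le 2$ is precisely that $H$ minus one edge can be covered by $\le 2$ vertices meeting both sides, so $H$ embeds into "$K_{k-1}$ plus a vertex of degree $2$" — i.e. into $G_j[Z' \cup \{w\}]$ where $Z' \subseteq Z$, $|Z'| = k-1$, using the two $Z$-neighbours of $w$ as the cut: one side of $H-e$ goes into $Z'$, the small side uses $w$ and more of $Z'$, and $e$ is the edge we are about to complete. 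So $w$ already lies in a copy of $H$ minus at most... no: we get that $w$ together with $Z'$ contains $H - e$ for the specific edge $e$, whose endpoints are split across $w$'s side and $Z'$. Completing $e$ is not what we want (it may be an edge already present). Rather: among the $k-1$ vertices of $Z'$, one of them — call it $y$ — is the one we want $w$ to become adjacent to next; choose the embedding of $H - e$ so that $e = \{w\text{-image vertex}, y\}$? But $w$ has degree $\ge 2$ only. This forces the small side of the cut to be exactly $\{$one vertex$\}$ or small, and $\deg_H$ of that vertex may exceed $2$.

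So the correct, robust approach: apply Observation~\ref{obs:self perc}. The graph $G_j[Z \cup \{w\}]$ has $|Z|+1$ vertices and contains a clique on $|Z| \ge k$ vertices, hence contains $H$ as a subgraph (any $k$ clique vertices). By Observation~\ref{obs:self perc} (with the ground set $Z \cup \{w\}$, of size $\ge k+1 > k$ — wait, Observation~\ref{obs:self perc} is stated for $v(G)=v(H)$). Fix that: restrict to $Z'' := Z' \cup \{w\}$ with $|Z'| = k-1$ chosen to contain both neighbours of $w$ in $Z$, so $|Z''| = k$. Does $G_j[Z'']$ contain $H$? It is $K_{k-1}$ plus a vertex $w$ of degree $2$. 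It contains $H$ iff $H$ embeds into $K_{k-1} + (\text{deg-}2\text{ vertex})$, i.e. iff $H$ has a vertex $v$ with a set of two vertices whose removal... no — iff $H - v$ is complete for some $v$ of degree $\le 2$?? Not generally true. Hmm — but $H - e$ has connectivity $\le 2$, so there's a $2$-cut $\{a,b\}$; pick the smallest side $S$ (with $|S| \ge 1$, $|S| \le (k-2)/... )$; if $|S| = 1$, say $S = \{v\}$, then $v$ has all its $H-e$-neighbours in $\{a,b\}$, so $\deg_{H-e}(v) \le 2$, and $H - e - v$ is complete? Only if the other side plus $\{a,b\}$ is complete, which need not hold. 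I think the intended argument instead runs inside the full process $(G_i)$ via Lemma~\ref{lem:seq} and a careful choice of which copy of $H-e$ to use, tracking a growing clique — mirroring the $K_4$ argument of \cite{bollobas2017maximum}. The genuinely hard step, then, is Step 2: showing that a $2$-separator of $H - e$ lets one "pivot" a copy of $H$ so that completing the missing edge enlarges the clique by one new vertex, and I would expect to spend most of the proof setting up exactly which vertices of the separator map where and verifying the required edges are present by induction on the number of vertices already absorbed. I would structure it as: (a) name the $2$-cut and the endpoints of $e$; (b) case analysis on whether $e$'s endpoints are on the same side; (c) in each case exhibit the embedding of $H - e$ into $G_j[Z \cup \{w\}]$ with $e$ mapped to a $\{w, y\}$ pair for a not-yet-neighbour $y \in Z$, concluding $wy \in E(G_{j+1})$, and iterate $\le k$ times to make $w$ adjacent to all of $Z$; (d) sum the costs over $w \in W \setminus U$.

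\smallskip

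In short: \textbf{the plan} is an outer loop over vertices of $W \setminus U$ (at most $k-2$ of them) and an inner loop of at most $k$ steps per vertex, using the $2$-separator of $H-e$ to route an embedding of $H-e$ through the current clique and a pair of neighbours of the new vertex so as to force one new clique-edge per step; the self-percolation hypothesis $\final H_H = K_{v(H)}$ via Observation~\ref{obs:self perc} guarantees that once $w$ sees enough of $Z$ the bounded configuration $G_j[Z'' ]$ percolates to a clique, closing the inner loop. \textbf{The main obstacle} is the routing argument in Step 2 — making precise how a $2$-cut of $H-e$ is exploited to pivot copies of $H$ so that the completed edge always attaches the fresh vertex to the growing clique, handling the placement of $e$'s endpoints relative to the cut — and I expect the write-up to be dominated by that case analysis.
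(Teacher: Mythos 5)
There is a genuine gap in your plan, centred exactly where you suspect it is. Your Step~2 tries to absorb vertices of $W\setminus U$ into a growing clique one at a time, by routing a copy of $H-e$ so that the small side of the $2$-cut is carried by a single vertex $w$ that sees only two vertices of the current clique. As you notice yourself, this forces the small side to be a single vertex of $H-e$-degree $\le 2$, which is not implied by $\kappa(H-e)\le 2$; and bootstrapping out of that (via Observation~\ref{obs:self perc} on $Z''\cup\{w\}$) fails because $G_j[Z'\cup\{w\}]$ need not contain a copy of $H$ at all. So the inner loop, as described, does not close.

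The paper's proof uses the same two ingredients but in a cleaner and genuinely different way, and it avoids iteration entirely. Take \emph{any} non-adjacent pair $x\in U\setminus W$, $y\in W\setminus U$ at time $i$. Let $Z_0$ be a separator of $H-e$ of size at most $2$ with $Z_1,Z_2$ distinct components of $(H-e)-Z_0$. The point is to use $Z_0$ as a genuine \emph{separator}: embed $Z_0$ into $U\cap W$ (possible since $|U\cap W|\ge 2$), embed $Z_1$ into $U$ with $x$ in its image, and embed $Z_2$ and all remaining components into $W$ with $y$ in the image of $Z_2$. Because $Z_1$ and $Z_2$ have no edges between them in $H-e$, the fact that there may be no edges between $U\setminus W$ and $W\setminus U$ is irrelevant; every required edge lies inside the clique $U$, inside the clique $W$, or goes from a component to $Z_0\subseteq U\cap W$, hence is present. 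This yields a copy $H'$ of $H-e$ on exactly $v(H)$ vertices containing both $x$ and $y$. At time $i+1$ the missing edge of $H'$ is filled, so $G_{i+1}[V(H')]$ contains $H$; then Observation~\ref{obs:self perc} together with the trivial bound on the length of a process on $v(H)$ vertices shows $G_{i+c}[V(H')]$ is complete, hence $xy\in E(G_{i+c})$. Since $x,y$ were arbitrary, all missing pairs are handled \emph{simultaneously}, in one application of the separator argument rather than a nested loop. What you were missing is precisely this: the $2$-cut is not used to wedge a low-degree vertex against a sparse $w$, but to split $H-e$ across the two cliques so that no cross-edges are needed in the embedding.
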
	
	\begin{proof}
        Let $x\in U\setminus W$ and $y\in W\setminus U$ such that $xy\notin E(G_i)$ (if such a pair does not exist, the result trivially holds). We claim that there is a copy $H'$ of $H-e$ in $G_i$ such that $x,y\in V(H')$. Indeed as $\kappa(H-e)\leq 2$, by definition either $H-e$ is a clique of size at most 3 (which cannot occur as we removed an edge from a graph $H$ to get $H-e$) or there is some vertex subset $Z_0\subseteq V(H-e)$ of size at most 2 such that $H-e$ is disconnected after removing $Z_0$. Let $Z_1$ and $Z_2$ be vertex sets of distinct connected components in $H-e$ after removing $Z_0$. Then we can embed $H-e$ into $G_i$ mapping $Z_0$ into $U\cap W$ and such that $x$ lies in the image of $Z_1$ and $y$ lies in the image of $Z_2$. This gives the required copy  $H'$. Now if the pair of vertices in $V(H')\subseteq V(G)$ corresponding to the edge $e$, are not adjacent in $G_i$, they will certainly be adjacent at time $i+1$. Thus $G_{i+1}[V(H')]$ is a $v(H)$-vertex set that contains a copy of $H$ and by Observation \ref{obs:self perc}, the $H$-process on $G_{i+1}[V(H')]$ results in a clique. Moreover, trivially, this happens after at most $c-1$ time steps. Thus by Observation \ref{obs:hom}, $G_{i+c}[V(H')]$ is a complete graph and, in particular, $xy\in E(G_{i+c})$. As $x$ and $y$ were arbitrary nonadjacent vertices at time $i$, this proves the claim. 
        \end{proof}
    Let $H_1,\ldots H_\tau$ be the sequence of copies of $H$ in the process $(G_i)_{i\geq 0}$ given by Lemma \ref{lem:seq}. Using Claim \ref{clm:superlinear_3connected}, we have that  the vertices $V(H_1\cup\ldots\cup H_\tau)$  host a clique in $\final{G}_H$. Indeed, this follows by a simple induction showing that for all $i\in [\tau]$, $\final{G}_H[V(H_1\cup\ldots\cup H_i)]$ is complete. The    base case asserts $V(H_1)$ hosts a complete graph during the process which follows from Observation \ref{obs:self perc} (and Observation \ref{obs:hom}). For $2\leq i\leq \tau$, by induction we can assume that $V(H_1\cup\ldots \cup H_{i-1})$ is complete at some point in the process and as with $H_1$, Observation \ref{obs:self perc} implies that $V(H_i)$ will host a complete graph. Claim \ref{clm:superlinear_3connected} then implies that $\final{G}_H[V(H_1\cup\ldots\cup H_i)]$ is indeed complete as $|V(H_i)\cap (V(H_1\cup\ldots \cup H_{i-1}))|\geq |V(H_i)\cap V(H_{i-1})|\geq 2$. 

    In order to bound the running time $\tau$, we bound the time which it takes for these cliques to form. We identify some key time steps as follows. First we let $i_0=1$ and for $j\geq 1$, we define 
        \[
        i_j:=\min\{i\in[\tau]:V(H_i)\nsubseteq V(H_1\cup\ldots\cup H_{i_{j-1}})\},
        \]
    where we use the convention that $\min \emptyset=\infty$.  Let $j^*$ be the maximum $j\geq 0$ such that $i_j\neq \infty$ and note that $j^*\leq n$ as for $1\leq j\leq j^*$, we have that $|V(H_1\cup\ldots\cup H_{i_{j}})|>|V(H_1\cup\ldots\cup H_{i_{j-1}})|$ and $|V(H_1\cup\ldots\cup H_{i_{j^*}})|\leq |V(G)|=n$. For $0\leq j\leq j^*$, we now also define 
        \[
        t_j:=\min\{i\in [\tau]:G_i[V(H_1\cup\ldots\cup H_{i_{j}})] \mbox{ is complete}\},
        \]
    noting that this is well-defined as we showed above that $\final{G}_H[V(H_1\cup\ldots\cup H_\tau)]$ is complete. 

    We claim that $t_{j^*}=\tau$. Indeed, the fact that $t_{j^*}\leq\tau$ follows from the fact that an edge was added at time $t_{j^*}$ to complete the clique on $V(H_1\cup\ldots\cup H_{i_{j^*}})$. Moreover, if $\tau>t_{j^*}$, then the edge added to complete $H_\tau$ would be added after $V(H_1\cup\ldots\cup H_{i_{j^*}})$ already hosts a clique and thus $V(H_\tau)$ cannot be contained in $V(H_1\cup\ldots\cup H_{i_{j^*}})$, contradicting the definition of $j^*$. 

    We now provide upper bounds on the indices $t_j$. 

        \begin{clm} \label{clm:index upper bd}
        For $1\leq j \leq j^*$, we have that 
        $t_j\leq t_{j-1}+2c. $
        \end{clm}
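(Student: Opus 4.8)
The plan is to reduce the claim to a single application of Claim \ref{clm:superlinear_3connected}, applied to the sets $U := V(H_1\cup\ldots\cup H_{i_{j-1}})$ and $W := V(H_{i_j})$. Note first that, by the definition of $i_j$, every copy $H_i$ with $i<i_j$ satisfies $V(H_i)\subseteq U$, so that $V(H_1\cup\ldots\cup H_{i_j})=U\cup W$, and moreover $|U|\geq v(H_1)=v(H)$ while $|W|=v(H)$. If $t_{j-1}+2c\geq\tau$ there is nothing to prove since $t_j\leq\tau$ by definition, so from now on I would assume $t_{j-1}+2c<\tau$.

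The first thing to establish is that $i_j\leq t_{j-1}+1$. For every index $i$ with $i_{j-1}<i<i_j$ we have $V(H_i)\subseteq U$, so the edge completing $H_i$ at time $i$ lies inside $U$; but $G_{t_{j-1}}[U]$ is complete and edges are never deleted, so no edge inside $U$ can be added at any time after $t_{j-1}$, forcing $i\leq t_{j-1}$ (the remaining case $i_j=i_{j-1}+1$ is handled identically, using that $H_{i_{j-1}}$ is itself completed at some time $i_{j-1}\leq t_{j-1}$). The same observation gives $V(H_{i_j-1})\subseteq U$, so the overlap property $|V(H_{i_j-1})\cap V(H_{i_j})|\geq 2$ from Lemma \ref{lem:seq} yields $|U\cap W|\geq 2$.

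Next I would note that $W$ becomes a clique quickly: $G_{i_j}[W]$ already contains the copy $H_{i_j}$ of $H$, so by Observation \ref{obs:self perc} the $H$-process on $G_{i_j}[W]$ terminates at $K_{v(H)}$, and this takes at most $\binom{v(H)}2\leq c-1$ steps, whence $G_{i_j+c-1}[W]$ is complete by Observation \ref{obs:hom} --- exactly the argument already used inside the proof of Claim \ref{clm:superlinear_3connected}. Combined with the fact that $G_{t_{j-1}}[U]$ is complete, monotonicity of the process shows that at time $i^\ast:=\max\{t_{j-1},\,i_j+c-1\}$ both $G_{i^\ast}[U]$ and $G_{i^\ast}[W]$ are cliques. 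By the first step $i^\ast\leq t_{j-1}+c$, and since we are assuming $t_{j-1}+2c<\tau$ this gives $i^\ast\leq\tau-c$, so Claim \ref{clm:superlinear_3connected} applies and shows that $U\cup W=V(H_1\cup\ldots\cup H_{i_j})$ hosts a clique at time $i^\ast+c\leq t_{j-1}+2c$. Therefore $t_j\leq i^\ast+c\leq t_{j-1}+2c$, as required.

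I expect the only real subtlety to be the bookkeeping with the constant $c=v(H)^2$: the chain of inequalities barely closes, and does so precisely because $i_j\leq t_{j-1}+1$ together with $\binom{v(H)}2\leq c-1$ gives $i_j+c-1\leq t_{j-1}+c$. A looser estimate --- either on how long it takes $W$ to percolate to a clique, or on how far $i_j$ can lie beyond $t_{j-1}$ --- would only deliver $t_j\leq t_{j-1}+2c+O(1)$, so the choice of these two intermediate bounds has to be made with a little care.
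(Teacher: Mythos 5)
Your proof is correct and follows essentially the same route as the paper's: both arguments establish $i_j\leq t_{j-1}+1$ and then feed $U'=V(H_1\cup\ldots\cup H_{i_{j-1}})$ and $W'=V(H_{i_j})$ into Claim \ref{clm:superlinear_3connected} at time $\max\{t_{j-1},i_j+c-1\}$. The only cosmetic differences are that the paper derives $i_j\leq t_{j-1}+1$ by noting the edge completing $H_{t_{j-1}+1}$ must leave $U$ (rather than showing $i_j-1\leq t_{j-1}$ as you do), and that the paper does not bother with the $t_{j-1}+2c\geq\tau$ dichotomy, since the hypothesis $i\leq\tau-c$ in Claim \ref{clm:superlinear_3connected} is in fact not used in its proof.
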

        \begin{proof}
        We first note that for all $j\in[j^*]$, we have 
            \begin{equation}\label{eq:ij upper}    
            i_j\leq t_{j-1}+1.
            \end{equation}
        Indeed, at time $t_{j-1}$ the set $V(H_1\cup\ldots\cup H_{i_{j-1}})$ hosts a clique and so the edge added to complete $H_{t_{j-1}+1}$ must contain at least one vertex outside $V(H_1\cup\ldots\cup H_{i_{j-1}})$, and thus the first $i$ such that $V(H_i)$ is not contained in $V(H_1\cup\ldots\cup H_{i_{j-1}})$, is at most $t_{j-1}+1$.

        We will now show that for all $j\in [j^*]$, 
            \begin{equation} \label{eq:tj upper bd}
            t_j\leq \max\{i_j+c-1,t_{j-1}\}+c,
            \end{equation}
        from which the claim follows immediately using \eqref{eq:ij upper}. To prove, \eqref{eq:tj upper bd}, note that at time $i_j+c-1$, we will have that $U':=V(H_{i_j})$ hosts a clique due to Observation \ref{obs:self perc} (and Observation \ref{obs:hom}). Moreover, by definition at time $t_{j-1}$ the set $W':=V(H_{1}\cup\ldots\cup H_{i_{j-1}})$ hosts a clique. Hence at time $t':=\max\{i_j+c-1,t_{j-1}\}$, $U'$ and $W'$ both host cliques and \eqref{eq:tj upper bd} follows from an application of Claim \ref{clm:superlinear_3connected} using that $|U'\cap W'|\geq |V(H_{i_j})\cap V(H_{i_{j}-1})|\geq 2$. 
        \end{proof}
        
    Finally, Claim \ref{clm:index upper bd} implies that 
        \[
        \tau=t_{j^*}\leq 1+2cj^*\leq 1+2cn=O(n),
        \]
    as required. 
    \end{proof}

\section{Concluding remarks} \label{sec:conclude}
To conclude, we give some remarks on extensions of our results and directions for future research. 

\subsection{Trees}
For all $t$-vertex trees $T$, Theorem \ref{thm:trees} gives an upper bound on $M_T(n)$ that is quadratic in $t$ and independent of $n$.  A natural question is to consider how tight this result is.

    \begin{quest} \label{q:tree}
    For $t\in \NN$, what is $M^*(t):=\max\{\limsup_{n\rightarrow \infty}M_T(n): T \mbox{ is a } t\mbox{-vertex tree}\}$?
    \end{quest}

We focus on the $\limsup$ here as $M_T(n)$ is not necessarily monotone increasing in $n$ and there may be small values of $n$  which we are not interested in\footnote{We are also unsure whether the sequence might oscillate.}. With this notation, Theorem \ref{thm:trees} gives that $M^*(t)=O(t^2)$. As a lower bound the following simple example gives that $M^*(t)\geq t-1$. 

    \begin{exmp} \label{ex:star}
    Let $T=K_{1,t-1}$ be the $t$-vertex star. Then $M_T(n)=t-1$ for all $n$ sufficiently large. 
    \end{exmp}
    \begin{proof}
    The upper bound was already shown in Lemma \ref{lem:star}, so it suffices to prove a lower bound. For this, we take $G$ to be the disjoint union of the stars $K_{1,s}$, $1\leq s \leq t-2$ and a set $W$ of $n-\binom{t}2+1$ isolated vertices. The $K_{1,t-1}$-process $(G_i)_{i\geq 0}$ on $G$ runs for $t-1$ steps before stabilising at the complete graph. Indeed, for $1\leq i \leq t-3$,  exactly one vertex becomes universal at time step $i$ and the degree of all vertices in $W$ increases by 1. At time step $t-2$ all degree-one vertices in $G$ become universal and $W$ remains an independent set. Finally at time step $t-1$, all missing edges are added.  \end{proof}

 We actually expect that this example is best possible and pose the following conjecture, which would establish that $M^*(t)=t-1$. 

    \begin{conj} \label{conj:tree}
 For all $t\in \NN$ and $n\in \NN$ sufficiently large,   $M_T(n)\leq t-1$ for any $t$-vertex tree $T$. 
    \end{conj}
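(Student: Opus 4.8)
The plan is to prove Conjecture~\ref{conj:tree} by strong induction on $t=v(T)$, sharpening the proof of Theorem~\ref{thm:trees} until the quadratic slack disappears. The base cases $t\le 3$ are immediate (an edge does nothing; $K_{1,2}$ and all stars are handled by Lemma~\ref{lem:star}), so one assumes $T$ is not a star and that the bound holds for all \emph{forests} on fewer vertices. The first thing I would establish is a clean forest analogue of Lemma~\ref{lem:star}: if $F$ is a forest on $m$ vertices with $c$ components then $M_F(m')\le m-c$ (for $m'$ in the relevant range), which generalises both Lemma~\ref{lem:star} and Example~\ref{ex:star}. This is needed because the induction will be applied to the induced forest $T[U]$, whose order $|U|$ is not itself ``large''.

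Granting this, the first improvement is to replace $\mu=M_{T[U]}(|U|)\le\binom{|U|}2$, used just after \eqref{eq:trees_final}, by the \emph{linear} bound $\mu\le|U|-c(T[U])\le|U|-1$ from the forest lemma (legitimate since, $T$ not being a star, every component of $T[U]$ has fewer than $t$ vertices). Then \eqref{eq:trees_finalbutone} reads $M_T(n)\le 2+3\ceil{\height_z(T)/2}+|U|-1+\delta$ for each non-leaf root $z$. The core of the argument is then to show that, optimised over $z$ and combined with the structural relations between $\diam(T)$, $\height_z(T)$, $|U|$ and $\delta=\delta(T;z)$, this is at most $t-1$. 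The natural way is an exact charging scheme: as in Proposition~\ref{prop:tree_many_univ}, the height-$2i$ layers consumed in Lemma~\ref{lem:trees_almostuniversal}, the $\mu$ steps taking place inside $T[U]$, and the final $\delta-2$ steps are charged to pairwise disjoint sets of vertices of $T$, which together with the $t-|U|-\delta$ untouched leaf-type vertices must fit inside $V(T)$; one has to make this bookkeeping tight rather than off by the additive (and, through the crude handling of $\diam$ in \eqref{eq:trees_final}, multiplicative) constants currently present.

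The second improvement, needed to remove the factor $3$ in front of $\ceil{\height_z(T)/2}$, is an amortised version of Stages~(i)--(iv): one shows that throughout the $\approx\tfrac32\height_z(T)$ steps in which cover vertices of successive copies of $T$ are promoted to almost-universal and then to universal, a \emph{new} universal vertex appears in essentially every step, so these phases coincide with, rather than add to, the universal-vertex count. Here Observation~\ref{obs:trees_core} already makes the parent of a leaf of the time-$1$ copy adjacent to all of $V(G)\setminus V(T_0)$ at time $2$, and one pushes this forward along $T$ so that a pool of genuinely universal vertices is growing from about time $\tfrac12\height_z(T)$ on. Finally, the completion step (extending $K_{1,\delta}$ to $T$ using universal vertices, as in the proof of Theorem~\ref{thm:trees}) should be invoked with the \emph{smallest} admissible pool, which I expect one can push down to roughly the vertex-cover number of $T$.

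The hard part will be exactly this tightness. Theorem~\ref{thm:trees} loses an additive $O(1)$ (and a multiplicative constant in $\diam$) at several independent places, while the bound $t-1$ is attained \emph{only} by stars and admits \emph{no} slack; one must therefore show the trees extremal for the different sources of delay are disjoint, so the losses never co-occur. A secondary difficulty is the ``$n$ sufficiently large'' clause: one must check that for $n\ge n_0(t)$ the $T$-process, once it moves, cannot stall at a non-complete $T$-stable graph before the universal-vertex machinery engages, and that $n_0(t)$ is uniform enough that the induction applied to $T[U]$ at order $|U|<t$ is not vacuous. I would attempt the forest lemma and the amortised Stage analysis first, as the most self-contained pieces, and only then the global charging argument that turns $2+3\ceil{\height_z(T)/2}+|U|+\delta-1$ into $t-1$.
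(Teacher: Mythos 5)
The statement you were asked to prove is Conjecture~\ref{conj:tree}, which the paper explicitly leaves \emph{open} --- there is no proof of it in the paper to compare against, and what you have written is likewise not a proof but a research programme. The language throughout (``I would establish'', ``granting this'', ``one has to make this bookkeeping tight'', ``I would attempt\ldots first'') makes clear that the load-bearing steps have not been carried out. Concretely, the plan rests on (a) an unproven ``forest lemma'' asserting that a forest $F$ with $c$ components satisfies $M_F(m')\le v(F)-c$, which you merely assert generalises Lemma~\ref{lem:star}; (b) an unproven amortisation argument that would remove the factor $3$ in front of $\ceil{\height_z(T)/2}$ in \eqref{eq:trees_finalbutone}; and (c) an unproven global charging argument showing the several sources of additive loss in the Stage analysis never co-occur. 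None of these is established.

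It is also worth noting that even the first, most concrete of your improvements does not come close on its own. For the spider $T$ on $t=7$ vertices with centre $z$ and three legs of length two, one has $\height_z(T)=2$, $|U|=3$ (the three middle vertices), $T[U]$ edgeless so $\mu=0$, and $\delta=2$; even with a perfect bound on $\mu$, the estimate \eqref{eq:trees_finalbutone} gives $M_T(n)\le 2+3\cdot 1+0+2=7$, one more than the conjectured $t-1=6$. So the decisive gain must come from (b) and (c), which are exactly the parts you label ``the hard part'' and do not attempt, and there is no evidence yet that the amortisation closes the gap uniformly over all trees. Your proposed direction is reasonable and is aligned with the authors' own remarks following the conjecture --- they too single out the crude bound on $\mu$ and the separate treatment of stars as the known sources of slack --- but neither you nor the paper supplies a proof, so the conjecture remains open.
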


    Evidence for this conjecture is given by our proof of  Theorem \ref{thm:trees}. Indeed, an inspection of the proof  gives that the quadratic term in our upper bound is only needed to upper bound the parameter $\mu=\mu(T;z)$. Recall from \eqref{eq:trees_mu}  that $\mu$ is  the maximum running time of the $F$-process on $|V(F)|$ vertices, where $F=T[U]$ is the subforest of $T$ induced on the vertex set $U$ given by Lemma \ref{lem:trees_vertexcover}.  We lack any effective bound on this parameter $\mu$ and so simply upper bound $\mu$ by the total number of possible edges on $|U|\leq t/2$ vertices. Our proof thus reduces our current upper bound on $M^*(t)$ to a question of bounding the running time of a forest $F$ on $v(F)=f$ vertices, and any progress on the trivial bound of $\binom{f}{2}$ would lead to an improvement on the upper bound of $M^*(t)$. The following question asks this explicitly and we remark that a natural subcase  to consider first would be when the forests are themselves trees.

    \begin{quest} \label{q:forest}
    For $f\in \NN$, what is  the maximum value of $M_F(f)$ over all  $f$-vertex forests $F$? 
    \end{quest}

 A linear upper bound for Question \ref{q:forest} would result in a linear upper bound for Question \ref{q:tree}, thus matching the order of magnitude of our lower bound from Example \ref{ex:star}.

 \begin{rem} \label{rem:forest}
     In a previous version of this manuscript, we asked if one could get a bound of the form $M_F(f)\leq cf$ for some $c<1/2$ for Question \ref{q:forest} which would directly give Conjecture \ref{conj:tree} due to our upper bound in Theorem \ref{thm:trees} which has a linear term of the form $3t/4$ (when excluding the case that $T$ is a star). The anonymous referee brought our attention to a construction giving that $M_{K_{1,f-1}}(f)\geq f-3$. Namely, one takes as $G_0$ the graph on vertex set $[f]$ such that the only non-edges are $\{\{i,i+1\}:i=1,\ldots,f-1\}$ and $\{f-2,f\}$. Each vertex $i=1,\ldots,f-3$ will then become universal in step $i$ of the process and so the running time is $f-3$. 

As every forest $F$ can be realised as $T[U]$ for some tree $T$
 and vertex set $U$ as given by Lemma \ref{lem:trees_vertexcover}, this shows that one cannot establish Conjecture \ref{conj:tree}, simply by universally bounding the quantity $\mu=\mu(T;z)=M_{T[U]}(|U|)$ from \eqref{eq:trees_mu}. 
 Nonetheless, we believe that there is a route towards proving Conjecture \ref{conj:tree} by considering each possible tree and individually bounding the  parameters appearing in  \eqref{eq:trees_finalbutone} which correspond to the different stages of the process in our analysis, with specialised bounds depending on the tree in question.   
  \end{rem}

\subsection{The sublinear regime}
Our work here makes significant progress on understanding the graphs $H$ for which $M_H(n)$ can be sublinear. Indeed, by Theorem \ref{thm:23bound}, any connected\footnote{The connected condition here is crucial, see Section \ref{sec:disconnect}.}  graph  $H$ with a running time  $M_H(n)$ that is sublinear in $n$ is either a cycle or has a degree 1 vertex. However, we fall short of characterising those graphs $H$ with sublinear running time and Theorem \ref{thm:simulate} shows that not all graphs $H$ with a degree 1 vertex have this property. For a better understanding of the class of graphs $H$ with sublinear running time, we need a better grasp on the influence of degree 1 vertices on running times. One potential route to do this would be to compare the running time of a graph $H$ with a degree one vertex $v$ and the running time for $H-v$.  In all cases we know, we  either have that  removing the degree one vertex   has no significant effect on the running time (as with the construction for Theorem \ref{thm:simulate}) or  the running time for $H-v$ is large and adding the pendent edge  drops the running time to a constant (as with Example \ref{ex:clique pendent edge}). Moreover, the behaviour is also sensitive to which vertex the pendent edge is added, as our construction for Theorem \ref{thm:simulate} can be seen to have constant running time if the pendent edge is moved to any other vertex of $S$ (see Figure \ref{fig:sim}). It would be very interesting to establish if this is a general phenomenon or if there is some  example where we see  different behaviour.

    \begin{quest} \label{q:deg}
    Is there a graph $H$ and $v \in V(H)$ with $\deg_H(v) = 1$ such that $M_{H}(n) = o(M_{H-v}(n))$ and $M_{H}(n) = \omega(1)$?
    \end{quest}

A negative answer to Question \ref{q:deg} would suggest that any graph $H$ with a degree 1 vertex $v$ either has $M_H(n)$ bounded by a constant or has $M_H(n)$ of the same order as some $M_{H'}(n)$ where $H'\subseteq H$ is a connected subgraph of $H$ with no degree 1 vertices.  This would imply that  apart from constant and logarithmic there are no other types of running time below linear.  We pose this as a conjecture in its own right.

    \begin{conj}\label{conj:ranges}
    Every  $H$ with $M_H(n) = o(n)$ satisfies either $M_H(n) = O(1)$ or $M_H(n) = \Theta(\log n)$.
    \end{conj}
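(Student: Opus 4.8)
The plan is to reduce the conjecture, via repeated deletion of degree-$1$ vertices, to two situations already under control: forests (Theorem~\ref{thm:trees}) and connected graphs of minimum degree at least $2$ (Theorem~\ref{thm:23bound} together with~\cite{FMSz1}). First, it suffices to treat connected $H$: for disconnected $H$ the running time should be governed by that of the individual components, and reducing to the connected case requires its own, more elementary, argument (cf.\ Section~\ref{sec:disconnect}) which we do not expect to present serious difficulties. So assume $H$ connected. If $\delta(H)\geq 2$, then $M_H(n)=o(n)$ is incompatible with Theorem~\ref{thm:23bound}, forcing $\Delta(H)\leq 2$; a connected graph with $\delta(H)\geq 2$ and $\Delta(H)\leq 2$ is a cycle, so $M_H(n)=\Theta(\log n)$ by~\cite{FMSz1}. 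It remains to handle connected $H$ with a vertex $v$ of degree $1$.

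The crux — and the step I expect to be the main obstacle — is a \emph{pendant lemma}: for a degree-$1$ vertex $v$ of a connected graph $H$ (so that $H-v$ is again connected), either $M_H(n)=O(1)$ or $M_H(n)=\Theta(M_{H-v}(n))$. The lower bound $M_H(n)=\Omega(M_{H-v}(n))$ in the second alternative is exactly a negative answer to Question~\ref{q:deg}; the matching upper bound should follow by running the $H$- and $(H-v)$-processes in parallel and showing that a copy of $H-v$ completed at time $t$ fails to ``close up'' to a copy of $H$ only when the image of the neighbour of $v$ has too few yet-unused neighbours to play $v$, a deficiency that can be repaired in a bounded number of steps by an argument in the spirit of Lemma~\ref{lem:growing_clique} and the clique-growth estimates of Section~\ref{sec:girth}. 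Granting the pendant lemma, one iterates it: removing degree-$1$ vertices one at a time produces a chain $H=H_0\supsetneq H_1\supsetneq\cdots\supsetneq H_m$ of connected graphs, ending either at the empty graph — in which case $H$ is a forest and $M_H(n)=O(1)$ by Theorem~\ref{thm:trees} — or at a connected graph $H_m$ with $\delta(H_m)\geq 2$. If $M_H(n)$ is unbounded, the $O(1)$ alternative of the pendant lemma is excluded at every step, so $M_H(n)=\Theta(M_{H_m}(n))$; then $M_{H_m}(n)=o(n)$, and the first paragraph identifies $H_m$ as a cycle, giving $M_H(n)=\Theta(\log n)$, as desired.

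Thus essentially the entire difficulty is concentrated in the pendant lemma, and specifically in the negative answer to Question~\ref{q:deg}: one must show that adding a pendant edge either already triggers the ``almost-universal vertex $\Rightarrow$ constant running time'' collapse exploited in the proof of Theorem~\ref{thm:trees} (cf.\ Example~\ref{ex:clique pendent edge}), or else leaves the order of the running time unchanged, with nothing in between. We do not see how to do this in general — which is precisely why Question~\ref{q:deg} is posed as open — so the above is best regarded as a conditional reduction of Conjecture~\ref{conj:ranges} to Question~\ref{q:deg} (together with the routine pendant upper bound and the bookkeeping for disconnected $H$), rather than a complete proof.
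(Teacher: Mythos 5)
The statement is labelled a conjecture in the paper and no proof is given; it is posed as open. Your proposal correctly treats it as such and reduces it to Question~\ref{q:deg}, which is essentially the same heuristic the authors sketch in the paragraph immediately before the conjecture, so your reduction is in the same spirit as the paper's own discussion of why one might believe Conjecture~\ref{conj:ranges}.

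Two of the steps you call routine, however, are not. Reducing to connected $H$ is itself problematic: Example~\ref{ex:triangles} exhibits a disconnected $H$ whose two components have $\Theta(\log n)$ and $\Theta(n)$ running time individually but whose union has $M_H(n)=O(1)$, and whether $M_{H_1\sqcup H_2}(n)$ can exceed $M_{H_1}(n)+M_{H_2}(n)$ in order is explicitly left open in Section~\ref{sec:disconnect}, so the disconnected bookkeeping is not elementary. And your pendant lemma asks for more than a negative answer to Question~\ref{q:deg} delivers: a negative answer gives only that $M_H(n)$ is not $o(M_{H-v}(n))$, which is strictly weaker than $M_H(n)=\Omega(M_{H-v}(n))$ (a function can be neither $o$ nor $\Omega$), and the matching upper bound $M_H(n)=O(M_{H-v}(n))$ is genuinely open rather than routine. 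The paper's own remark that relocating the pendant edge to a different vertex of $S$ in the construction for Theorem~\ref{thm:simulate} collapses $M_H(n)$ to a constant shows the running time is sensitive to where the degree-$1$ vertex is attached in a way that a ``run the processes in parallel'' argument does not obviously control. These gaps are precisely why the conjecture remains open, so your framing as a conditional reduction rather than a proof is the right one, but the reduction itself is less firmly in hand than ``routine'' and ``elementary bookkeeping'' suggest.
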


\subsection{Tree-width}\label{sec:tree-width}
In this paper, we initiated the study of which graph parameters of $H$ control the maximum running time of $H$-processes. 
Given Theorem \ref{thm:trees}, another natural parameter to consider is \emph{tree-width}, for which we use the notation $\tw(H)$.  Example \ref{ex:clique pendent edge} shows that there are graphs with large tree-width and constant maximum running time. Indeed, $\tw(H_k)=\tw(K_k)=k-1$. This shows that unlike small minimum degree and small connectivity, small tree-width is \emph{not} necessary for even constant maximum running time. However,  it might still be the case that it is sufficient, that is, a small tree-width implies an upper bound on running time.  Indeed trees are the unique graphs $H$ with $\tw(H)=1$ and thus Theorem \ref{thm:trees} gives that any graph with tree-width 1 has constant running time.  On the other hand, graphs with tree-width 2 can already have linear running time. Indeed, this is evidenced by the fact that  $\tw(K_{k,\ell})=\min\{k,\ell\}$ and Proposition \ref{prop:k2s_upper}, which states that $M_{K_{2,s}}(n)=\Theta(n)$ for $s\geq 3$. We believe that this is the largest running time that a tree-width 2 graph can have and pose the following conjecture. 

    \begin{conj} \label{conj:tw}
    Any graph $H$ with $\tw(H)=2$ has $M_H(n)=O(n)$.
    \end{conj}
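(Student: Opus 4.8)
The plan is to bootstrap off Theorem~\ref{thm:girth_construction} and then treat the one residual case by generalising the partition-coarsening method behind Proposition~\ref{prop:k2s_upper}. First note that the connectivity hypothesis of Theorem~\ref{thm:girth_construction} is automatic here: since $\tw(H)=2$, the graph $H$ is not complete once $v(H)\ge 4$, so $\kappa(H)\le\tw(H)\le 2$, and deleting an edge cannot increase connectivity (a simple graph $H-e$ is never complete), so $\kappa(H-e)\le 2$ for \emph{every} $e\in E(H)$; the cases $v(H)\le 3$ are trivial. Hence Theorem~\ref{thm:girth_construction} already yields $M_H(n)=O(n)$ whenever $\final{H}_H=K_{v(H)}$, i.e.\ whenever $H$ is self-percolating — for instance the diamond $K_4^-$ is self-percolating, so it is covered. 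The entire content of the conjecture is therefore the case $\final{H}_H\neq K_{v(H)}$; among tree-width-$2$ graphs this includes the cycles $C_k$ with $k\ge 4$ (handled by \cite{FMSz1}, where $M_{C_k}(n)=\Theta(\log n)$) and the complete bipartite graphs $K_{2,s}$ with $s\ge 3$ (handled by Proposition~\ref{prop:k2s_upper}), but also further series-parallel graphs, such as certain theta graphs, so the residual case is genuinely nonempty.

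For $\final{H}_H\neq K_{v(H)}$ one makes the standard reductions. By Theorem~\ref{thm:trees} we may assume $H$ is not a forest, so, being tree-width $2$ and hence $2$-degenerate, $\delta(H)\in\{1,2\}$. If $\delta(H)=1$, pick a leaf $\ell$ with neighbour $m$; once any copy of $H$ appears, Observation~\ref{obs:trees_core} (applied at the image of $\ell$, whose only neighbour in the copy is the image of $m$) forces the image of $m$ to become adjacent to \emph{every} vertex outside the copy in a single step, and a leaf-propagation argument as in Section~\ref{sec:trees} then bounds $\tau_H(G)$ — we in fact expect $O_H(1)$ here, using that $H$ has tree-width $2$ so that the large-clique obstructions exploited in Theorem~\ref{thm:simulate} cannot arise. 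So assume $\delta(H)=2$ and fix a vertex $x$ of degree $2$ with neighbours $y,z$, and set $H^-:=H-x$ (a graph with $\tw\le 2$ and $v(H)-1$ vertices).

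The key propagation step, generalising Claim~\ref{clm:k2s_nbr}, is the following: if $G_i$ contains a copy of $H^-$ in which $y,z$ are embedded at vertices $u,v$, and $w\notin V(\text{copy})$ satisfies $wu\in E(G_i)$ and $wv\notin E(G_i)$, then $wv\in E(G_{i+1})$ — because the copy of $H^-$ together with $w$ in the role of $x$ realises $H$ minus the edge $xz$, which is completed by $wv$ (an instance of Observation~\ref{obs:trees_core}). With this in hand, the plan is to maintain a partition $\mathcal P_i$ of $V(G)$ with the invariant that any two vertices in a common block are joined, at time $i$, by such a copy of $H^-$, and to run the scheme of Proposition~\ref{prop:k2s_upper}: an analogue of Claim~\ref{clm:k2s_blocks1} showing that within $O_H(1)$ steps every missing edge between (resp.\ inside) blocks that already see one such edge is filled in; an analogue of Claim~\ref{clm:k2s_blocks2} showing that any edge the process adds forces two blocks to merge within $O_H(1)$ further steps while preserving the invariant; and the observation that $\mathcal P$ can strictly coarsen at most $n-1$ times and is stable once the process halts. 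Together these give $\tau_H(G)=O_H(n)$, hence $M_H(n)=O(n)$.

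The main obstacle is that the proof of Proposition~\ref{prop:k2s_upper} leans heavily on the rigidity of $K_{2,s}$ — that $H^-=K_{2,s-1}$ is again of the same type, is symmetric in its two attachment vertices $y,z$, and attaches to an \emph{independent} set of $s-1$ vertices — whereas for a general series-parallel $H$ none of this is available, and it is unclear that a single block invariant works uniformly. I expect one must exploit the series-parallel decomposition of $H$: at a parallel node one should be able to argue essentially as for $K_{2,s}$, while at a series node one must understand how copies of $H$ decompose along the corresponding $2$-cut and reduce to smaller series-parallel pieces. The delicate points will be the small base cases and, above all, showing that the bounded-time ``merging'' step (the analogue of Claim~\ref{clm:k2s_blocks2}) survives in the absence of symmetry — this will require a careful local analysis of the ways $H$ minus an edge can embed into bounded-size subgraphs of $G_i$. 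A successful execution would also shed light on the finer structure of the sub-quadratic regime discussed around Conjecture~\ref{conj:ranges}.
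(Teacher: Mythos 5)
First, note that Conjecture~\ref{conj:tw} is posed as an open problem in the paper; there is no proof here for you to match, so your submission is being read as a purported resolution rather than as an alternative route.

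Your preliminary reduction is sound: since tree-width two gives $\kappa(H)\le\delta(H)\le 2$ and deleting an edge never increases vertex connectivity, the hypothesis $\kappa(H-e)\le 2$ of Theorem~\ref{thm:girth_construction} holds automatically for every edge $e$, and hence every self-percolating tree-width-$2$ graph $H$ already satisfies $M_H(n)=O(n)$. The ``key propagation step'' you extract for a degree-$2$ vertex $x$ with $H^-=H-x$ is also a correct local mechanism for adding edges. But the two residual cases are announced rather than resolved, and each contains a genuine gap. For $\delta(H)=1$ you assert that you ``expect $O_H(1)$'' because the large-clique obstruction of Theorem~\ref{thm:simulate} cannot arise when $\tw(H)=2$; this heuristic is unsupported, and it does not engage with the obstruction actually exploited in Theorem~\ref{thm:counterexample}, whose lock-in gadget $H'$ is a $2$-connected chain of diamonds rather than a clique. (That gadget does contain a $K_4$-minor, so it has $\tw\ge 3$ and is not a direct counterexample, but you would have to show that no tree-width-$2$ structure can be locked in place in the same way, and you give no argument for this; nor do you need $O_H(1)$ --- $O(n)$ would suffice, and you do not establish even that.) For $\delta(H)=2$ you sketch a partition-coarsening scheme modelled on Proposition~\ref{prop:k2s_upper}, but you concede in your own words that the analogues of Claims~\ref{clm:k2s_blocks1} and~\ref{clm:k2s_blocks2} may fail without the symmetry and rigidity of $K_{2,s}$ and that ``it is unclear that a single block invariant works uniformly''; that is exactly where a proof must be supplied.

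In short, what you have is a clean reduction of the conjecture to two concretely formulated but unproven sub-problems, not a proof. The reduction via Theorem~\ref{thm:girth_construction} is worth keeping, and the $H^-$-propagation step is a reasonable building block, but a bounded-time merging argument in the $\delta(H)=2$ case and a genuine linear (not necessarily constant) upper bound in the $\delta(H)=1$ case both remain open.
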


It is well-known that  if a graph   $H$ has $\kappa(H)\geq 3$, then $H$ contains a $K_4$-minor. On the other hand, graphs with tree-width at most 2 are precisely the graphs with no $K_4$-minor. Therefore a solution to  Conjecture  
\ref{conj:tw} would give a natural subclass  of the set of  graphs $H$ with $\kappa(H)\leq 2$, that have maximum running time at most linear. This would complement Theorem \ref{thm:girth_construction} which also gives a subclass of graphs with $\kappa(H)\leq 2$ that have running time at most linear.  

Finally we remark that using bounds on tree-width to give effective upper bounds on running time only has the hope to work for very small values of tree-width, namely tree-width 1, as in Theorem \ref{thm:trees} and tree-width 2 as in Conjecture \ref{conj:tw}. Indeed, in forthcoming work \cite{FMSz3}, we give a construction showing that the wheel graph $W_7$ with $7$ spokes,  which has tree-width 3,  already has running time  $M_{W_7}(n)=n^{2-o(1)}$.

\subsection{Disconnected graphs}\label{sec:disconnect}
Most of our results transfer to the case of disconnected $H$ with only some additional technical difficulties. Indeed, Theorem \ref{thm:trees} can be adapted to give exactly the same upper bound for $t$-vertex forests and in \cite{FMSz1} we showed that if $H$ is a union of disjoint cycles, then again $M_H(n)$ is logarithmic in $n$.  

On the other hand, not all our results transfer directly to disconnected $H$, as the following example shows. 

    \begin{exmp} \label{ex:triangles}
    Let $H^\Delta$ be the graph in Figure \ref{fig:23counterexample}. Then $M_{H^\Delta}(n) \leq 4$.
    \end{exmp}
    \begin{proof}
    Given a copy $H_1$ of $H^\Delta$ at time $1$ of the $H^\Delta$-process $(G_{i})_{i\geq 0}$ on some graph $G$, $V(H_1)$ would be a clique at time $2$. Any two edges that are incident at time $i\geq 2$ form a triangle after one more step, and any two triangles at time $i$ form a $K_6$ at time $i+1$. This shows that $G_4$ is a disjoint union of a (possibly empty) collection of isolated vertices, a (possibly empty) collection of isolated edges and one large clique on all the remaining vertices outside these two collections. The graph $G_4$ is thus $H^\Delta$-stable. 
    \end{proof}

Thus it is no longer true for disconnected $H$, that $\delta(H)\geq 2$ and $\Delta(H)\geq 3$ implies a running time that is at least linear, as was the case for connected $H$ by Theorem \ref{thm:23bound}. One can however adapt the proof of Theorem \ref{thm:23bound} to show that if $H$ has the property that \emph{every connected component} of $H$ has minimum degree at least 2 and maximum degree at least 3, then again $M_H(n)=\Omega(n)$. 

    \begin{figure}
    \centering
    \includegraphics[width=0.5\linewidth]{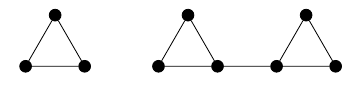}
    \caption{A graph with maximum running time $O(1)$ whose two components have logarithmic or linear running time, respectively.}
    \label{fig:23counterexample}
    \end{figure}

In general, when  $H$ is the disjoint union of two graphs $H_1$ and $H_2$ we do not know to what extent $M_H(n)$ depends on the individual running times $M_{H_1}(n)$ and $M_{H_2}(n)$.
We have encountered examples for which the asymptotic running time of $H$ matches one of the individual running times as well as examples where $M_{H_1}(n)$ and $M_{H_2}(n)$ are  large while $M_H(n)$ is very small (as in Example \ref{ex:triangles}).
However, we have not seen whether $M_H(n)$ can be much bigger than both $M_{H_1}(n)$ and $M_{H_2}(n)$.

    \begin{quest}
    Are there graphs $H_1$, $H_2$ such that their disjoint union $H_1\sqcup H_2$ satisfies
    $M_{H_1\sqcup H_2}(n) = \omega\left(M_{H_1}(n)+M_{H_2}(n)\right)$?
    \end{quest}

\subsection{Small graphs} We conclude by remarking that our results here   give a fairly complete picture of the running times for small graphs $H$. Indeed for example, using our results as well as some more ad-hoc proofs, we can show that apart from $K_5$ all graphs on at most 5 vertices have running time at most linear in $n$. Determining the running time of $K_5$, and in particular whether $M_{K_5}(n)$ is quadratic or not, remains one of the most pertinent open questions in this area.

\subsection*{Acknowledgements} The authors would like to thank the anonymous referees for their careful reading of the previous manuscript and their suggestions. In particular, we are grateful for the example provided by one of the referees which we discuss in Remark \ref{rem:forest}.

\bibliography{Biblio}
\end{document}